\newtheorem{thm}{Theorem}[section]
\newtheorem{ass}[thm]{Assumption}
\newtheorem{coro}[thm]{Corollary}
\newtheorem{lem}[thm]{Lemma}
\newtheorem{prop}[thm]{Proposition}
\theoremstyle{definition}
\newtheorem{defn}[thm]{Definition}
\theoremstyle{remark}
\newtheorem{remk}[thm]{Remark}
\renewcommand{\th}{\theta}
\renewcommand{\index}{{\rm ind}}
\newcommand{\ind}{{\rm ind}}
\newcommand{\oD}{\overline{D}}
\newcommand{\oz}{\overline{z}}
\newcommand{\oP}{\overline{P}}
\newcommand{\crit}{{\rm crit}}
\newcommand{\tu}{\widetilde{u}}
\newcommand{\tp}{\widetilde{p}}
\newcommand{\tilh}{\widetilde{h}}
\newcommand{\Rbb}{ {\mathbb R}}
\newcommand{\Zbb}{ {\mathbb Z}}
\newcommand{\Cbb}{ {\mathbb C}}
\newcommand{\Zcal}{ {\mathcal Z}}
\newcommand{\Scal}{ {\mathcal S}}
\newcommand{\Ncal}{ {\mathcal N}}
\renewcommand{\phi}{\varphi}
\definecolor{blue(ncs)}{rgb}{0.0, 0.53, 0.74}
\title{Critical points of Laplace eigenfunctions on polygons}
\author{Chris Judge}
\address{Department of Mathematics, Indiana University, Bloomington} 
\email{\href{mailto:cjudge@indiana.edu}{cjudge@indiana.edu}}
\thanks{The work of C.J.\ is partially supported by a Simons Foundation collaboration grant. }
\author{Sugata Mondal}
\address{School of Mathematics, Tata Institute of Fundamental Research, Mumbai}
\email{\href{mailto:sugatam@math.tifr.res.in}{sugatam@math.tifr.res.in}}
\thanks{The work of S. M. \ is partially supported by Ramanujan Fellowship of SERB, Govt. of India.}
\begin{document}

\begin{abstract}
We study the critical points of Laplace eigenfunctions on polygonal 
domains with a focus on the second Neumann eigenfunction. 
We show that if each convex quadrilaterals has no second Neumann 
eigenfunction with an interior critical point,
then there exists a convex quadrilateral with an unstable critical point.
We also show that each critical point of a second-Neumann eigenfunction
on a Lip-1 polygon with no orthogonal sides is an acute vertex.
\end{abstract}

\maketitle

\section{Introduction}

A second Neumann eigenfunction $u$ of the Laplacian approximates 
the temperature distribution of an insulated domain for large times.  
The `hot spots' conjecture \cite{Rauch} \cite{Kawohl} is the assertion
that $u$ does not assume its maximum value in the interior of the domain.
The conjecture is false for some non-contractible plane domains
\cite{B-W} \cite{Burdzy-one-hole} but is still believed to be true 
for convex domains. The conjecture is known to be true when 
the domains are somewhat elongated, for example, the Lip-1 planar 
domains of \cite{A-B}. In \cite{J-M} \cite{Erratum} we show that the 
hot spots conjecture holds true for acute triangles thus resolving 
Polmath 7 \cite{polymath}. 

In the present paper, we extend our study of critical points of 
eigenfunctions to general polygons and we encounter new phenomena.
Note that every planar domain may be approximated by polygonal
domains, and hence the weak form of the hot spots conjecture---{\it some 
second Neumann eigenfunction has no interior maximum}---for all planar 
domains would  follow from the verification of the strong hot spots 
conjecture---{\it every second Neumann 
eigenfunction has no interior maximum}---for all polygonal domains. 

Our general approach to the hot spots conjecture 
is based on the fact that eigenfunctions 
and their critical points\footnote{
With the exception of rectangles, the critical set 
of a second Neumann eigenfunction
on a simply connected polygon

is finite \cite{J-M-arc}.}
vary continuously as one varies the domain. Roughly speaking,
to show that a second Neumann
eigenfunction $u_0$ on a polygon $U_0$ has no interior critical points, 
one constructs a path of polygons $P_t$ and associated 
path of eigenfunctions $u_t$ so that the eigenfunction $u_1$ on $P_1$ has no 
interior critical points. If one can show that the putative critical 
points of each $u_t$ are `stable' under perturbation, then $u_0$ 
also has no interior critical points. 

In the case of triangles, we took $P_1$ to be a right isosceles triangle,
and we established enough stability to successfully implement 
this strategy \cite{J-M} \cite{Erratum}.
Here we show that the strategy is likely to be more 
difficult to implement if the polygon has more sides. 

\begin{thm}
\label{thm:unstable}
If each convex quadrilateral has no interior critical point, 
then there exists a convex quadrilateral $Q$, a second Neumann 
eigenfunction $u$ on $Q$, and a nonvertex critical point $p$ of $u$
that is not stable under perturbation.
\end{thm}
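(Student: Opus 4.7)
The plan is to argue by contradiction. I will assume, in addition to the hypothesis of the theorem, that every nonvertex critical point of every second Neumann eigenfunction on every convex quadrilateral is stable under perturbation, and derive a contradiction.

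First, I would set up a counting framework on the space $\mathcal{Q}$ of nonrectangular convex quadrilaterals modulo Euclidean isometry. By the cited result from J-M-arc, the critical set of the second Neumann eigenfunction $u_Q$ on $Q \in \mathcal{Q}$ is finite; by the hypothesis it lies on $\partial Q$; and by convexity each of the four vertices is automatically a critical point of $u_Q$. Write $N(Q)$ for the number of nonvertex (edge-interior) boundary critical points. The stability assumption and the implicit function theorem imply that each nonvertex critical point persists under small perturbations of $Q$, so $N(Q') \geq N(Q)$ for $Q'$ near $Q$. A newly appearing critical point at a perturbed quadrilateral would have to be ``born'' either from a vertex or from infinity along a family approaching $Q$; in either case it fails to persist along the approach, contradicting stability. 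Hence $N$ is locally constant on $\mathcal{Q}$, and since $\mathcal{Q}$ is path-connected (rectangles form a codimension-three subvariety of the five-dimensional space of convex quadrilaterals modulo isometry), $N$ is globally constant.

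The heart of the plan is to exhibit two quadrilaterals in $\mathcal{Q}$ with different values of $N$, producing the desired contradiction. I would do this via Hadamard variation of a rectangle $R = [0,a] \times [0,b]$ with $a>b$, whose second Neumann eigenfunction $u_R(x,y) = \cos(\pi x/a)$ has the degenerate critical set $\{x=0\} \cup \{x=a\}$. For a perturbation $Q_\epsilon$ of $R$ in direction $\vec v$, one has $u_{Q_\epsilon} = u_R + \epsilon\, w_{\vec v} + O(\epsilon^2)$, and on the perturbed edge corresponding to $\{x=0\}$ the tangential derivative is $\epsilon\, F_{\vec v} + O(\epsilon^2)$ for a function $F_{\vec v}$ on the edge depending linearly on $\vec v$. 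Nonvertex critical points on that edge correspond, to leading order in $\epsilon$, to zeros of $F_{\vec v}$ in the interior of the edge. Since the space of perturbation directions is multi-dimensional and $\vec v \mapsto F_{\vec v}$ is linear, I expect to find two directions $\vec v_1, \vec v_2$ for which the zero counts of $F_{\vec v_1}$ and $F_{\vec v_2}$ differ. Interpolating continuously between the two perturbations inside $\mathcal{Q}$ (avoiding $R$ itself) then gives a one-parameter family of quadrilaterals along which the zero count must change. The change can occur only at a parameter value where the corresponding $F_{\vec v}$ has a double zero, equivalently where $u$ has a boundary critical point with vanishing tangential second derivative. For sufficiently small $\epsilon$, this degenerate zero of $F_{\vec v}$ persists as a degenerate zero of the full $u_\tau$, yielding an actual convex quadrilateral $Q$ with an unstable nonvertex critical point of its second Neumann eigenfunction, contradicting the stability assumption.

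The main technical obstacle is the claim that the zero counts of $F_{\vec v}$ really do differ for different choices of $\vec v$. I would establish this by writing $F_{\vec v}$ in semi-explicit form using the Neumann Green's function of $R$ and integration by parts, carrying out the computation for natural perturbations such as moving a single vertex. One then compares the resulting functions: for example, demonstrating that moving the top-right vertex upward produces an $F_{\vec v}$ of fixed sign on the interior of the left edge (so no interior zero), while a different direction such as a symmetric shift of both top vertices produces an $F_{\vec v}$ with at least two sign changes. Once this comparison is in hand, the interpolation and continuity argument above yields the required unstable nonvertex critical point.
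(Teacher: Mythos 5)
Your proposal is a genuinely different strategy from the paper's (which constructs an explicit one-parameter family by ``breaking'' a sub-equilateral isosceles triangle along the side containing the index $-1$ critical point, and shows that the obtuse vertex so created blocks the index $-1$ critical point from crossing; see Proposition \ref{prop:blocking} and \S \ref{sec:break}). Unfortunately your argument has a gap at its central step.

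The problem is the claim that stability of all nonvertex critical points forces $N$ to be \emph{locally constant}. Stability, as defined in the paper, is a one-sided condition: it says that each nonvertex critical point of $u$ on $Q$ is approached by some critical point of $u_n$ on $Q_n$ for every approach $Q_n \to Q$. Combined with the fact that critical points cannot accumulate at a vertex of a simply connected polygon (this requires Lemma \ref{lem:interior-crit-pt-convergence} and Proposition \ref{two:coeff:zero}, which you would need to invoke explicitly), this gives \emph{lower} semicontinuity of $N$. It does not give upper semicontinuity. In particular, stability does not rule out that two distinct nonvertex critical points of $u_n$ merge into a single nonvertex critical point of $u$: each of the two is a legitimate ``witness'' to stability of the limit point, and nothing in the definition says they must be distinct. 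Your sentence ``A newly appearing critical point at a perturbed quadrilateral would have to be `born' either from a vertex or from infinity'' omits exactly this third possibility; critical points that appear under perturbation can simply split off from an existing nonvertex critical point. A lower semicontinuous integer function on a connected space need not be constant, so your global constancy of $N$ --- and hence the contradiction --- does not follow. (The paper's blocking argument avoids this issue by controlling the \emph{index}, using that the signed count is preserved by Proposition \ref{prop:local-number-of-critical-points} and that an index $-1$ critical point cannot sit at a convex vertex by Corollary \ref{coro:1-connected-index}.)

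Two further points. First, $N(Q)$ is only well-defined if the second Neumann eigenvalue of $Q$ is simple; since you cannot assume this for arbitrary convex quadrilaterals, you would at least need to restrict to a connected open set where simplicity holds, or phrase everything in terms of a continuous choice of eigenfunction along a path (as the paper does via Lemma 12.2 of \cite{J-M}). Second, the Hadamard-variation step is asserted rather than carried out: the identification of nonvertex boundary critical points with zeros of the linearization $F_{\vec v}$ requires those zeros to be simple, and the claim that two natural perturbation directions yield different zero counts is an expectation, not a proof. The paper sidesteps all of this by starting from an acute isosceles triangle, where Lemma \ref{lem:isosceles} gives complete control of the critical set, rather than from the degenerate rectangle.
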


By `stable under perturbation' we mean that if $Q_n$ is a sequence
of quadrilaterals that converges to $Q$ and $u_n$ is a sequence 
of second Neumann eigenfunctions on $Q_n$ that converges to $u$,
then each $u_n$ has a critical point $p_n$ so that $p_n$ converges to $p$.
We conjecture that instability does not hold for triangles.

On the other hand, we are able to successfully apply our strategy for 
resolving the hot spots conjecture to a large class of polygons. 

\begin{thm}
\label{thm:admissible}
Suppose that $P_t$ is a path of polygons such that each $P_t$ has exactly two
acute vertices, no two sides of $P_t$ are orthogonal, and $P_1$ is
an obtuse triangle. Then the second Neumann eigenvalue of $P_0$ is simple, 
and the set of critical points of each eigenfunction consists
of the two acute vertices.
\end{thm}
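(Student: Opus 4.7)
The strategy is to propagate the desired properties from the base case $P_1$ (where they are essentially known, $P_1$ being an obtuse triangle) along the path to $P_0$, using the earlier theorem on Lip-1 polygons as the key tool controlling critical points. I would organize the proof around three assertions along the path: (A) each $P_t$ is Lip-1, (B) the critical set of every second Neumann eigenfunction on $P_t$ equals the two acute vertices, and (C) the second Neumann eigenvalue $\lambda_2(P_t)$ is simple.

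For assertion (A), I would prove a self-contained geometric lemma: any polygon with exactly two acute vertices and no two sides orthogonal is Lip-1. The idea is to orient the plane so that the two acute vertices $v,w$ lie on a horizontal line and serve as the extreme points of the polygon in the $x$-direction. The boundary then decomposes into an upper and a lower chain joining $v$ to $w$; the condition that every other interior angle is obtuse forces each successive edge of these chains to turn by a controlled amount, so that each chain is the graph of a $1$-Lipschitz function. This is a combinatorial angle-chase using the polygon angle sum and the absence of right angles.

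For (B), granted (A), the earlier theorem (critical points of second Neumann eigenfunctions on Lip-1 polygons with no orthogonal sides are acute vertices) immediately gives that the critical set is contained in the two acute vertices of $P_t$. The reverse inclusion follows from the local expansion of $u$ in polar coordinates at a vertex of angle $\alpha<\pi/2$, which forces $\nabla u$ to vanish there in the sense used in the paper. This gives (B) uniformly in $t$, provided $u_t$ is well-defined and varies continuously, which hinges on (C).

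For (C), which is the main analytic obstacle, I would run a connectedness argument. Let $T=\{t\in[0,1]:\lambda_2(P_t)\text{ is simple}\}$. Openness of $T$ follows from standard Kato analytic perturbation theory; the point $t=1$ lies in $T$ by the known simplicity of $\lambda_2$ on obtuse triangles. To prove closedness, suppose $t_n\to t^*$ with $t_n\in T$ but $\dim\ker(\Delta+\lambda_2(P_{t^*}))\ge 2$. Pick linearly independent eigenfunctions $u_1,u_2$ at $t^*$. Every linear combination $\alpha u_1+\beta u_2$ is again a second Neumann eigenfunction and, by (B), has critical set contained in the two acute vertices $\{v,w\}$. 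I would then show that the Wronskian-type determinant
\[
W(p)\;=\;\det\bigl(\nabla u_1(p),\nabla u_2(p)\bigr)
\]
cannot vanish at any interior point $p\notin\{v,w\}$, for otherwise some nonzero $(\alpha,\beta)$ produces an eigenfunction with a critical point at $p$, violating (B). On the other hand, if $W$ is nowhere zero in the interior then $\nabla u_1$ and $\nabla u_2$ are everywhere transverse, which combined with the nodal-line structure of a second Neumann eigenfunction on a Lip-1 domain (one nodal arc separating $v$ from $w$) gives a topological contradiction via an index count around the two vertex zeros. This forces $T$ to be closed, hence $T=[0,1]$ and in particular $\lambda_2(P_0)$ is simple. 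The hardest part is carrying out the degree/index computation for $W$ cleanly; everything else is soft continuity plus the already-established Lip-1 theorem.
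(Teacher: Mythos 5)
Your proposal has a fundamental circularity that makes it non-viable as written.

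The crux of the problem is step (B). You invoke ``the earlier theorem (critical points of second Neumann eigenfunctions on Lip-1 polygons with no orthogonal sides are acute vertices)'' as a known black box. No such theorem exists prior to this point: the statement you are citing \emph{is} Theorem~\ref{thm:admissible}, the very theorem you are trying to prove. The only prior result about Lip-1 domains is the probabilistic theorem of Atar--Burdzy \cite{A-B}, which shows that the maximum of a second Neumann eigenfunction occurs on the boundary. It does \emph{not} rule out non-extremal critical points in the interior of sides, nor does it locate the extrema at the two acute vertices; the paper's introduction explicitly emphasizes that Theorem~\ref{thm:admissible} is a strengthening of \cite{A-B} in exactly these respects. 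So your step (B) assumes the conclusion. Compounding this, your dependency order is also cyclic: you state that (B) ``hinges on (C),'' but your closedness argument for (C) invokes (B). Even granting everything you wrote, there is no base case from which the induction can start except at $t=1$, and you have not supplied the mechanism for propagating the critical-set statement from $P_1$ to $P_0$.

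That propagation mechanism is precisely the paper's main technical contribution, and it is quite different from what you sketch. The paper does not try to establish simplicity of $\lambda_2(P_t)$ along the entire path; instead, it invokes a result from \cite{J-M} that a continuous family of second Neumann eigenfunctions $u_t$ can always be chosen along (a modification of) the path, simple eigenvalue or not. It then tracks two integer-valued quantities: $S(t)$, the number of nonzero-index critical points of $u_t$ (in the Poincar\'e--Hopf sense developed in \S\ref{sec:topology}), and $V(t)$, the number of vertices of $P_t$ at which a nodal arc of $L_e u_t$ terminates for some side $e$. The key lemma shows that the set $\{t : S(t)\ge 3 \text{ or } V(t)\ge 1\}$ is open and closed, via a delicate interplay of the Bessel-coefficient analysis of \S\ref{sec:sector}--\S\ref{sec:crit-pts-converge-vertex}, the stability-of-total-index result (Proposition~\ref{prop:local-number-of-critical-points}), and the zero-index normal form of \S\ref{sec:zero-index}. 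Since the obtuse triangle $P_1$ gives $S(1)=2$, $V(1)=0$, the set is empty, and one concludes $S(0)=2$, $V(0)=0$, from which the critical-set characterization follows. Simplicity of $\lambda_2(P_0)$ is then extracted at the very end by a two-line argument: if $u_+, u_-$ were independent second Neumann eigenfunctions, some nontrivial combination $u^*$ would vanish at an acute vertex, contradicting that the acute vertex is a nonzero global extremum of every second Neumann eigenfunction. This makes your proposed Wronskian/index-count argument for simplicity unnecessary; as you yourself note, that step is the hardest and vaguest part of your sketch, and it is not obvious the transversality of $\nabla u_1, \nabla u_2$ on a simply connected domain produces a contradiction. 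Your step (A), identifying the class of polygons with Lip-1 domains having no orthogonal sides, is essentially correct and matches Propositions~\ref{prop:lip} and \ref{prop:lip-admissible}, but in the paper this serves to \emph{interpret} the theorem rather than to prove it.
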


The class of polygons described in Theorem \ref{thm:admissible}
is exactly (up to rigid motion) the class 
of polygons that have no orthogonal sides and satisfy the Lip-1
condition of \cite{A-B}  (see Proposition \ref{prop:lip-admissible}).
Thus, Theorem \ref{thm:admissible} provides a non-probabilistic proof
of the weak hot spots conjecture for Lip-1 domains.  
Moreover, in contrast to the result of \cite{A-B}, 
we find that not only are there no interior critical points but
there are also no critical points on the boundary other than 
the two acute vertices. 
Recently, Jonathan Rohleder \cite{Roh} announced a non-probabilistic
proof of the main result of \cite{A-B}.

We now outline the contents of this paper.
In section \ref{sec:sector}, we use the Bessel expansion of
an eigenfunction $u$ to understand the nodal set of $Xu$ near 
a vertex where $X$ is a constant (resp. rotational) vector field. 
In particular, we show that whether or not an arc of
the nodal set $Xu$ ends at the vertex is essentially determined
by the first two Bessel coefficients, the angle at $v$,
and the angle between the vector field and the sides 
adjacent to $v$ (resp. location of central point).
These criteria  will be used crucially in the 
proof of Theorem \ref{thm:admissible}.

We will need to rule out the possibility that critical points 
of a sequence of eigenfunctions, associated to a convergent 
sequence of polygons, converge to a vertex of the limiting polygon. 
In \S \ref{sec:crit-pts-converge-vertex} we show in various 
contexts that if critical points converge to a vertex $v$, 
then the first two Bessel coefficients of the limiting
eigenfunction equal zero. If the limiting polygon is simply
connected then this is impossible (Proposition \ref{two:coeff:zero}).

To check the stability of a critical point under perturbation, 
we will use a variant of the Poincar\'e-Hopf index.
In \S \ref{sec:topology}, we define this invariant to
include vertices and we prove a variant of the Poincar\'e-Hopf
index formula for Neumann eigenfunctions $u$ on polygons. 
We relate the index of a critical point of $u$ located at a vertex
$v$ with the first two Bessel coefficients of $u$ at $v$.
We also show that the `total local index' is unchanged under 
perturbation (Theorem \ref{prop:local-number-of-critical-points}).
As a consequence each non-zero index critical point is stable
(Lemma \ref{lem:top-stable-implies-stable}).

In \S \ref{sec:zero-index} we provide a local normal form for an 
eigenfunction in a neighborhood of a critical point $p$ of $u$ whose 
Poincar\'e-Hopf index equals zero (Lemma \ref{lem:tangential-cusp}).
Using this local normal form, we find that an index zero critical 
point cannot be a degree 1 vertex of the nodal set of $Xu$ where 
$X$ is either a constant or rotational vector field.

In \S \ref{sec:simply-connected} we specialize to simply connected polygons. 
For such domains, the nodal set of a second Neumann eigenfunction $u$
is a simple arc, and from this fact we deduce that at least 
one of the first two Bessel coefficients at each vertex is nonzero.
This implies a tighter relationship between the index of a vertex
critical point of $u$ and the first two Bessel coefficients
(Corollary \ref{coro:1-connected-index}).

In \S \ref{sec:eigen-obtuse} we prove Theorem \ref{thm:admissible}
(Theorem \ref{thm:only-acute}).
We first show that if polygon $P$ has at least one acute vertex
and a second Neumann eigenfunction $u$ on $P$ has an interior
critical point, then either $u$ has four non-zero index critical 
points or there exists a side $e$ of $P$ such that the nodal 
set of the derivative of $u$ in the direction $u$ 
has an arc that ends at a vertex $v$ of $P$.
This leads us to consider, for the path $u_t$ in Theorem 
\ref{thm:admissible}, the number, $S(t)$, of nonzero
index critical points and the number, $V(t)$, of vertices 
that are endpoints of an nodal arc of the derivative of $u_t$
in the direction of a side of $P$. We show that the set $A$
of $t \in [0,1]$ such that either $S(t)\geq 3$ or $V(t) \geq 1$ 
is open and closed. For the obtuse triangle $P_1$,
we have $S(1)=0$ and $V(1)=0$, and hence $A$ is empty.
In particular, the initial polygon $P_0$ has at most two non-zero
index critical points, and from this we deduce 
using the results of \S \ref{sec:zero-index} that there are
zero index critical points. Using the fact that $V(0)=0$, we find
that the two critical points are located at the vertices of $P_0$.
These two critical points are the unique 
global extrema, and this implies
that the eigenspace is one-dimensional. 

In \S \ref{sec:blocking} we provide a criterion for the instability 
of a critical point on a quadrilateral. This criterion is based on 
the fact that the index of a vertex with angle less than $\pi$ 
cannot equal $-1$ (Corollary \ref{coro:1-connected-index}). 
In particular, an index $-1$ critical point
cannot cross from one side adjacent to a vertex to the other side
of the vertex if the angle at the vertex is in $(\pi/ 2, \pi)$. 
Hence one is led to find a path of quadrilaterals $Q_t$
such that $Q_0$ has an index $-1$ that lies on one side
of a vertex and $Q_1$ and has an index $-1$ critical point 
on the other side of the vertex.

In \S \ref{sec:break} we construct such a path af quadrilaterals 
and thus prove Theorem \ref{thm:unstable} (Theorem \ref{thm:unstable-break}). 
The path is constructed by
taking a nearly isosceles triangle whose vertex $v$ of smallest
angle is less than $\pi/3$, and then `breaking' the side opposite to $v$.

In \S \ref{sec:convex} we specialize to convex polygons 
and find that if a second Neumann eigenfunction has only three
critical points then one is a minimum, one is a maximum, and the
third has index zero.

\section{Eigenfunctions on a sector} \label{sec:sector}

To understand the behavior of an eigenfunction in a neighborhood a vertex $v$ 
of angle $\beta$ of a polygon, we will consider its Fourier-Bessel expansion.
By performing a rigid motion, we may assume that the vertex $v$ 
is the origin, one side adjacent to $v$ lies in the ray 
$\{ z= r\, :\, r \geq 0\}$
the nonnegative real axis, and the other
side lies in the ray $\{ z= r \cdot e^{i \beta}\, :\, r \geq 0\}$.
If $u$ is a (real) eigenfunction with eigenvalue $\mu$ that satisfies 
Neumann conditions on the the rays $\theta=0$ and $\theta = \beta$, 
then separation of variables leads to the Fourier-Bessel expansion:
\begin{equation}  \label{eqn:Bessel-expansion}
u \left(r e^{i\theta} \right)~ =~ \sum_{n=0}^\infty\, c_n \cdot J_{\frac{n\pi}{\beta}}(\sqrt{\mu} \cdot r) \cdot
 \cos \left({\frac{n\pi \theta}{\beta}}\right).
\end{equation}
Here $c_n \in \Rbb$ and $J_{\nu}$ denotes the Bessel function of 
the first kind of order $\nu$ \cite{Lebedev}
\begin{equation} \label{eqn:exp-of-Bessel}   
J_{\nu}(x)~ 
=~ 
x^{\nu} \cdot \sum_{k=0}^{\infty}\, 
 \frac{(-1)^k \cdot x^{2k}}{2^{2k} \cdot \Gamma(k+\nu) \cdot \Gamma(k+\nu +1) }
\end{equation}
where $\Gamma$ is the Gamma function. 

If $u$ satisfies Dirichlet conditions on the rays $\theta= 0$ and $\theta=\beta$, 
then the one replaces $\cos$ with $\sin$, 
and if $u$ satisfies Dirichlet conditions
on the ray
$\theta=0$ and Neumann conditions on the ray
$\theta=\beta$,
then one replaces $\cos(n \pi \theta/\beta)$ with $\sin(n \pi \theta/2\beta)$
and $J_{n \pi/\beta}$ with  $J_{n \pi/2\beta}$.

From (\ref{eqn:exp-of-Bessel}) we find that, for each $\nu \geq 0$, 
there exists an entire function $g_{\nu}$ so that 
$J_{\nu}(\sqrt{\mu} \cdot r) = r^{\nu} \cdot g_{\nu}(r^2)$.\footnote{Note 
that though $g_{\nu}$ depends on the eigenvalue $\mu$, we will suppress 
$\mu$ from the notation.} Note that neither $g_{\nu}$ nor $g_{\nu}'$  vanishes 
in a neighborhood of $0$ for each $\nu\geq 0$. 
With this notation, (\ref{eqn:Bessel-expansion})
takes a more compact form
\begin{equation}  \label{eqn:Bessel-expansion-II}
u \left(r e^{i\theta} \right)~ 
=~ 
\sum_{n=0}^\infty\, c_n \cdot r^{n \cdot \nu} \cdot g_{n \cdot\nu}\left(r^2\right) \cdot
 \cos \left(n \cdot \nu \cdot \theta \right)
\end{equation}
where $\nu = \pi /\beta$.  Note that we are suppressing the dependence of $g$ on 
the eigenvalue $\mu$.

Given a function $f$, let $\Zcal(f)= f^{-1}(0)$ denote
the {\em nodal set} of $f$. 
For each $\psi \in \Rbb$, 
let $L_{\psi}$ denote the constant vector field defined by
\begin{equation}  \label{eqn:constant-field}
L_{\psi} u~ =~ \cos(\psi) \cdot \partial_x~ +~ \sin(\psi) \cdot \partial_y.
\end{equation}

\begin{lem} \label{lem:nodal-sector}
Let $u$ be an eigenfunction that satisfies Neumann conditions 
on the rays $\theta=0$ and $\theta=\beta$. 
\vspace{.2cm}
\begin{enumerate}[label=(\alph*)]
\item \label{part-a}
If $c_0 \neq 0$ and either $0<\beta<\pi/2$ or $c_1=0$,
then there exists an arc in $\Zcal(L_{\psi} u)$  
with an endpoint at the vertex $v$
if and only if $\psi \in [\pi/2,\pi/2 + \beta] \mod \pi$.

\vspace{.2cm}

\item  \label{part-b}
If $c_1 \neq 0$ and $\pi/2<\beta<\pi$,
then there exists an arc in $\Zcal(L_{\psi} u)$ 
with an endpoint at the vertex
if and only if $\psi \in [\beta-\pi/2, \pi/2] \mod \pi$.

\item  \label{part-c}

If $c_1 \neq 0$ and  $\beta =\pi$,
then there exists an arc in $\Zcal(L_{\psi} u)$ 
with an endpoint at the vertex
if and only if $\psi =\pi/2  \mod \pi$. 
Moreover, near the vertex $v$, this arc lies on $\partial P$.

\item  \label{part-d}

If $c_1 \neq 0$ and  $\beta >\pi$,
then there exists an arc in $\Zcal(L_{\psi} u)$ 
with an endpoint at the vertex
if and only if $\psi  \in [\pi/2, \beta - \pi/2]  \mod \pi$.

\end{enumerate}

Moreover, in all of the above situations, $\Zcal(L_{\psi} u)$ 
has at most one arc with an endpoint at the origin.
\end{lem}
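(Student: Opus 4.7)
The plan is to analyze $L_\psi u$ term-by-term in its Fourier--Bessel expansion (\ref{eqn:Bessel-expansion-II}), identify the leading behavior as $r \to 0$, and read off the angular profile whose zeros in $[0,\beta]$ correspond to nodal arcs emanating from the vertex. First I would convert $L_\psi$ into polar coordinates using $\partial_x = \cos\theta\,\partial_r - r^{-1}\sin\theta\,\partial_\theta$ and the analogous expression for $\partial_y$, obtaining
\[
L_\psi \;=\; \cos(\theta - \psi)\,\partial_r \;-\; r^{-1}\sin(\theta - \psi)\,\partial_\theta.
\]
Applying this to $u_n = c_n r^{n\nu}g_{n\nu}(r^2)\cos(n\nu\theta)$ and collapsing the trigonometry via $\cos A\cos B + \sin A\sin B = \cos(A - B)$ would then yield the key formula
\[
L_\psi u_n \;=\; c_n\, n\nu\, g_{n\nu}(r^2)\, r^{n\nu - 1}\cos\bigl((1 - n\nu)\theta - \psi\bigr) \;+\; O(r^{n\nu + 1}) \qquad (n \geq 1),
\]
together with $L_\psi u_0 = 2c_0\, r\, g_0'(r^2)\cos(\theta - \psi)$. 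Since $g_{n\nu}(0) \neq 0$ and $g_0'(0) \neq 0$, the $n$th summand has exact leading order $r^{n\nu - 1}$ (or $r$ when $n = 0$).

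Next I would identify the dominant summand in each of (a)--(d) by comparing exponents. In case (a) the hypothesis forces the $n = 0$ contribution to dominate, producing angular profile $\cos(\theta - \psi)$. In cases (b) and (d) the $n = 1$ summand has the unique smallest exponent $\nu - 1$, yielding profile $\cos((1 - \nu)\theta - \psi)$. In case (c) one has $\nu - 1 = 0$ and the $n = 1$ summand reduces at $r = 0$ to the constant $c_1 g_1(0)\cos\psi$. After factoring $L_\psi u = r^k\bigl(F(\theta) + r^\delta R(r,\theta)\bigr)$, a simple zero $\theta_0 \in (0,\beta)$ of $F$ persists to a smooth nodal arc terminating at the origin along the ray $\theta_0$ by the implicit function theorem. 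Solving $\cos((1 - n\nu)\theta - \psi) = 0$ for $\theta \in [0,\beta]$ then yields the intervals claimed in (a), (b), (d), and uniqueness follows because consecutive zeros of the angular cosine are spaced by $\pi/|1 - n\nu|$, which exceeds $\beta$ throughout the relevant ranges.

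The main obstacle will be case (c): when $\beta = \pi$ the leading $n = 1$ contribution is the nonzero constant $c_1 g_1(0)\cos\psi$, so a nodal arc can approach the vertex only when this constant vanishes, forcing $\psi \equiv \pi/2 \bmod \pi$. In that subcase $L_{\pi/2}u = \partial_y u$, and the Neumann condition on the two rays $\theta = 0$ and $\theta = \pi$ (which together form the $x$-axis) forces $\partial_y u$ to vanish on that axis, placing the nodal arc on $\partial P$ as asserted. The same type of argument, combining the leading-order expansion with the Neumann condition, will also handle the endpoint situations in (a), (b), (d) where a zero of $F$ happens to land at $\theta = 0$ or $\theta = \beta$, ruling out two distinct interior arcs from meeting the vertex simultaneously and thereby securing the ``at most one arc'' conclusion stated at the end of the lemma.
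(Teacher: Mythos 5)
Your proposal is correct and follows essentially the same route as the paper: both convert $L_\psi$ to polar coordinates, extract the leading term of the Fourier--Bessel expansion (the $n=0$ term in case (a), the $n=1$ term in cases (b)--(d)), solve for the zero of the leading angular profile, and upgrade to a smooth arc by the implicit function theorem, with uniqueness coming from the spacing of zeros of the cosine and with the Neumann condition handling the endpoint directions $\psi = \pi/2$ and $\psi = \beta - \pi/2 \bmod \pi$. The only cosmetic difference is that the paper treats the endpoint cases at the outset rather than at the end, and states the case (b)--(d) angular profile as $\cos(\psi + (\nu-1)\theta)$, which agrees with your $\cos((1-\nu)\theta - \psi)$ by evenness of cosine.
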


\begin{figure}
\includegraphics[scale=.3]{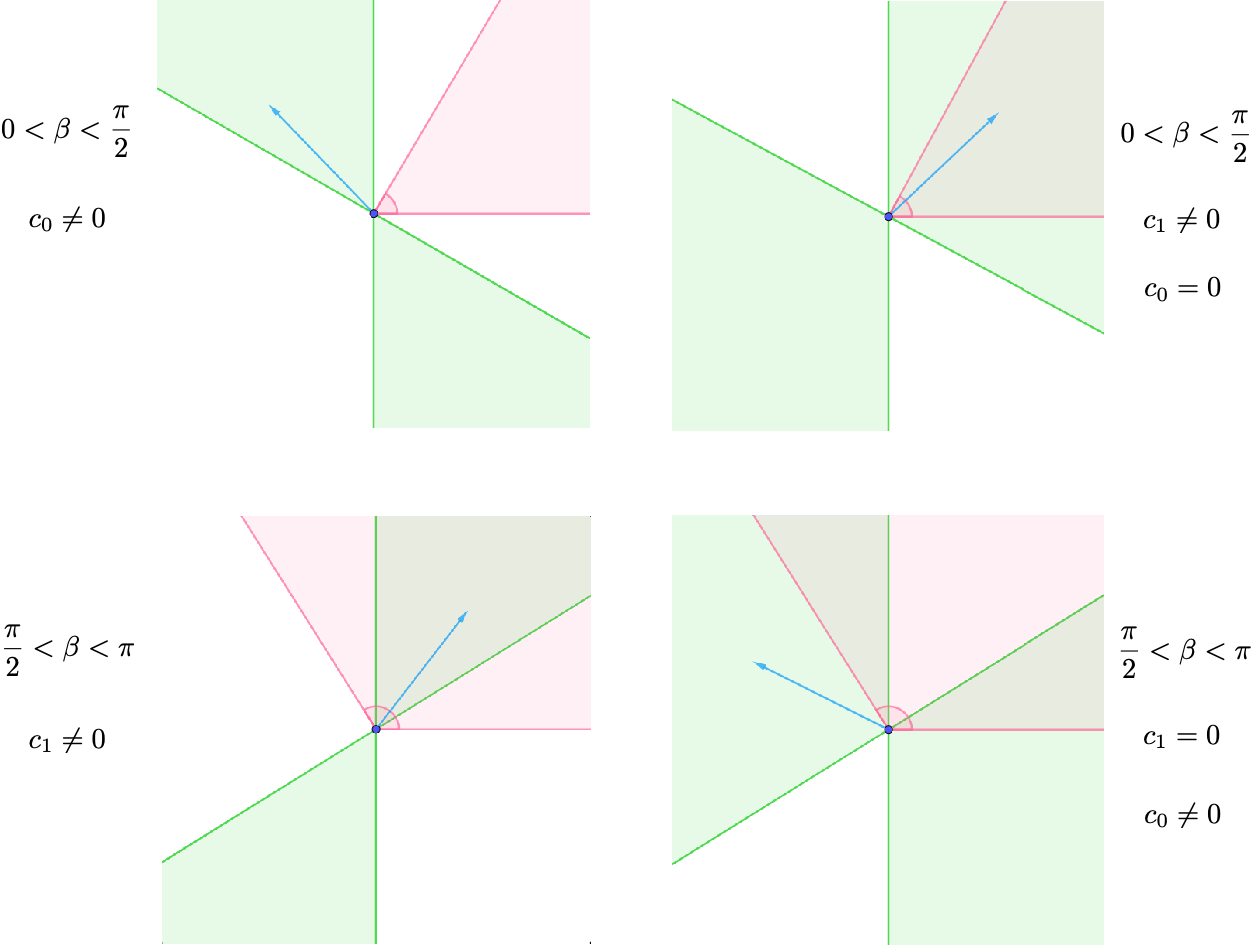}
\caption{Some of the cases described by Lemma \ref{lem:nodal-sector}. 
The region shaded pink is a neighborhood of the vertex $v$ with angle $\beta$.
The region shaded green describes the directions $L_{\psi}$ 
for which $L_{\psi} u$ has a nodal arc ending at $v$.
\label{fig:4cases}}
\end{figure}

\begin{proof}
If $\psi = \pi/2 \mod \pi$ (resp. $\psi= \beta -\pi/2 \mod \pi$) then
because $u$ satisfies Neumann conditions, the arc corresponding to 
$\theta=0$ (resp. $\theta =\beta$) lies in both $\Zcal(L_{\psi} u)$.

Because $\partial_x = \cos(\theta) \cdot \partial_r - \sin(\th) r^{-1} \cdot \partial_{\th}$ 
and $\partial_y = \sin(\theta) \cdot \partial_r + \cos(\th) r^{-1} \cdot \partial_{\th}$,
we have 
\begin{equation} \label{eqn:Lpsi-polar}
L_{\psi}~
=~ \cos(\psi-\theta) \cdot \partial_r~
+~
\sin(\psi-\theta) \cdot \frac{1}{r} \partial_{\theta}.
\end{equation}
If $c_0 \neq 0$ and either $c_1=0$ or $0 < \beta < \pi/2$, 
inspection of (\ref{eqn:Bessel-expansion-II}) shows that 
\begin{equation} \label{eqn:c_0-dominant}
u \left( r e^{i \theta} \right)~ 
=~  
a~ +~ b \cdot r^{2}~ +~ f(r, \theta)
\end{equation}
where $a$ and $b$ are constants and $|L_{\psi} f|= o(r)$ 
and $|\partial_{\theta} L_{\psi} f|=o(r)$.  In particular, 
we find that
\begin{equation} \label{eqn:c_1-L-u}
L_{\psi} u\left( r e^{i \theta} \right)~
=~
2 b \cdot r \cdot \cos(\psi-\theta)~
+~
o(r),
\end{equation}
and  
\begin{equation} \label{eqn:c_1-theta-L-u}
\partial_{\theta} L_{\psi} u \left( r  e^{i \theta} \right)~
=~
- 2 b \cdot r \cdot \sin(\psi-\theta)~
+~
o\left(r \right).
\end{equation}

If $\psi \in (\pi/2, \pi/2+ \beta) \mod \pi$, 
then from (\ref{eqn:c_1-L-u}) we find that $L_{\psi}u(r)$ and 
$L_{\psi}u(r e^{i \beta})$ have opposite signs  for sufficiently small $r$. 
Thus, by the intermediate value theorem,
there exists $\gamma(r) \in (0,\beta)$ so that 
$(L_{\psi}u) \left(r e^{i \gamma(r)} \right)=0$. 
Moreover, from (\ref{eqn:c_1-theta-L-u}), we find that 
the point $\gamma(r)$ is unique for sufficiently small $r$, 
and by the implicit function theorem, $\gamma$ is smooth. 
The map $r \mapsto r e^{i \gamma(r)}$ is the desired arc in $\Zcal(L_{\psi} u)$.
The uniqueness of $\gamma(r)$ implies that there is 
at most one arc.

Thus we have proven \ref{part-a}.
To prove \ref{part-b}, note that if  $\beta > \pi/2$
or $c_0=0$,
then inspection of (\ref{eqn:Bessel-expansion-II})
shows that
\[
u\left(r\cdot e^{i \theta}\right)~
=~
c_0~ 
+~ 
c_1 \cdot r^{\nu} \cdot \cos(\nu \cdot \theta)~
+~
o(r^{\nu}).
\]
and hence a straightforward computation gives
\[ 
L_{\psi} u\left(r\cdot e^{i \theta}\right)~
=~ 
c_1 \cdot \nu \cdot r^{\nu-1} 
\cdot 
\cos\left(\psi + (\nu-1) \cdot \theta \right)~
+~
o(r^{\nu-1}).
\]
Note that 
$\cos\left(\psi + (\nu-1) \cdot \theta \right)$
vanishes if and only if 
\[ 
\psi~
=~
\frac{\pi}{2} + (\beta - \pi) \cdot \frac{\theta}{\beta}
\mod \pi.
\]
One argues as in the proof of part (a) to obtain a unique
arc of $\Zcal(L_{\psi}u)$ that ends at $v$.

Parts \ref{part-c} and \ref{part-d}
follow from arguments similar those
that verified \ref{part-a} and \ref{part-b}.

\end{proof}

Lemma \ref{lem:nodal-sector} motivates the following definition.

\begin{defn}
Suppose $\beta \neq \pi/2$ and $u$ is a Neumann eigenfunction 
on the sector of angle $\beta$. 
We define the {\em leading Bessel coefficient} 
of $u$ at $v$ to be 
\begin{itemize}
\item $c_0$  if $\beta < \pi/2$, and 

\item $c_1$ if $\beta > \pi/2$.
\end{itemize}
\end{defn}

The definition allows us to succinctly state the following corollaries.

\begin{coro} \label{coro:arc-iff-leading-vanish}
If $L$ is a nonzero
constant vector field parallel to one the boundary rays of the sector 
of angle $\beta \neq \pi/2$, then the leading coefficient vanishes 
if and only if an arc of $\Zcal(L u)$ ends at $v$.
\end{coro}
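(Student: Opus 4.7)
The plan is a case analysis driven by Lemma \ref{lem:nodal-sector}, supplemented by a single leading-order computation that handles the cases where the named leading Bessel coefficient vanishes. First, by the reflection $\theta \mapsto \beta - \theta$ --- which preserves the sector, swaps its two bounding rays, and multiplies each $c_n$ by $(-1)^n$ --- the case $\psi \equiv \beta \pmod \pi$ reduces to the case $\psi \equiv 0 \pmod \pi$, so I may assume $L = L_0$.

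For the direction ``leading coefficient nonzero $\Rightarrow$ no arc ends at $v$'', I apply the relevant part of Lemma \ref{lem:nodal-sector} directly. If $\beta < \pi/2$ and $c_0 \neq 0$, part (a) says an arc exists iff $\psi \in [\pi/2, \pi/2 + \beta] \mod \pi$; a quick inspection shows $0 \mod \pi$ lies outside this set exactly because $\beta < \pi/2$. If $\pi/2 < \beta < \pi$ and $c_1 \neq 0$, part (b) furnishes the interval $[\beta - \pi/2, \pi/2]$, which likewise misses $0 \mod \pi$ since $\beta - \pi/2 > 0$.

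For the converse, suppose the leading coefficient vanishes. When $\beta > \pi/2$ with $c_1 = 0$ but $c_0 \neq 0$, part (a) of Lemma \ref{lem:nodal-sector} now applies (its hypothesis ``$c_1 = 0$'' being met), and since $\pi \in [\pi/2, \pi/2 + \beta]$ whenever $\beta \geq \pi/2$, the arc is produced. In the remaining subcases --- $c_0 = 0$ when $\beta < \pi/2$, or $c_0 = c_1 = 0$ when $\beta > \pi/2$ --- let $n_0$ be the smallest index with $c_{n_0} \neq 0$, and combine \eqref{eqn:Bessel-expansion-II} with \eqref{eqn:Lpsi-polar}, following the proof of Lemma \ref{lem:nodal-sector}(b), to obtain
\[
L_0 u(r e^{i\theta}) \;=\; c_{n_0}\, n_0 \nu\, g_{n_0 \nu}(0)\, r^{n_0 \nu - 1}\, \cos\bigl((n_0 \nu - 1)\theta\bigr) \;+\; o\bigl(r^{n_0 \nu - 1}\bigr).
\]
As $\theta$ ranges over $(0, \beta)$, the argument $(n_0 \nu - 1)\theta$ sweeps out $(0, n_0 \pi - \beta)$, an interval that contains $\pi/2$ whenever $n_0 \geq 1$ and $\beta < \pi$. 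Hence the leading term of $L_0 u$ has a transverse zero at some $\theta^\ast \in (0, \beta)$, and the intermediate value and implicit function theorems (applied exactly as in the proof of Lemma \ref{lem:nodal-sector}(a)) produce a unique arc in $\Zcal(L_0 u)$ with endpoint at $v$.

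The hard part will be ensuring that the last computation is uniform across the infinitely many subcases of unknown depth of vanishing $n_0$; the key observation making it uniform is that the inequality $n_0 \pi - \beta > \pi/2$ holds whenever $n_0 \geq 1$ and $\beta < \pi$, so the same transverse-zero-producing mechanism generates an arc regardless of $n_0$.
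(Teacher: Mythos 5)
Your proof follows essentially the same route as the paper's: reduce to $\psi = 0$ via reflection, read off the forward direction from Lemma \ref{lem:nodal-sector}, and establish the converse by extracting the leading Bessel term and locating a zero of $\cos((n_0\nu-1)\theta)$ in $(0,\beta)$. One nice refinement: you isolate the subcase $\beta > \pi/2$, $c_1 = 0$, $c_0 \neq 0$ and observe that part \ref{part-a} of Lemma \ref{lem:nodal-sector} handles it directly, which makes concrete the paper's ``a similar argument applies.'' However, your stated ``key observation'' --- that $n_0\pi - \beta > \pi/2$ holds whenever $n_0 \geq 1$ and $\beta < \pi$ --- is false as written: taking $n_0 = 1$ and $\beta = 3\pi/4$ gives $n_0\pi - \beta = \pi/4 < \pi/2$. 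What saves you is that in the two subcases where you actually invoke the computation the inequality holds for genuinely different reasons: when $\beta < \pi/2$, any $n_0 \geq 1$ gives $n_0\pi - \beta \geq \pi - \beta > \pi/2$; when $\pi/2 < \beta < \pi$, you have already assumed $c_0 = c_1 = 0$, so $n_0 \geq 2$ and $n_0\pi - \beta \geq 2\pi - \beta > \pi$. The proof is sound, but replace the blanket inequality with this two-case justification. (Both you and the paper implicitly restrict to $\beta < \pi$; indeed, for $\beta \geq 3\pi/2$ part \ref{part-d} of Lemma \ref{lem:nodal-sector} produces an arc at $\psi = 0$ even when $c_1 \neq 0$, so the corollary would fail there --- but this boundary case never arises in the paper's applications.)
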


\begin{proof}
Since $L$ is parallel to one of the boundary rays, 
the vector field $L$ is a multiple of $L_{\psi}$ with 
$\psi=0$ or $\beta$. In particular, $\Zcal(Lu)= \Zcal(L_{\psi}u)$.
By using the reflection symmetry about $\theta= \beta/2$,
we may assume without loss of generality that $\psi=0$.

Suppose that the leading coefficient does not vanish. 
If $\beta < \pi/2$, then  part (a) of Lemma \ref{lem:nodal-sector} 
implies that no arc in $\Zcal(L_{\psi} u)$ ends at the vertex. 
Similarly, parts (b), (c), and (d) imply that no arc in
$\Zcal(L_{\psi}u)$ ends at the vertex in the other cases.

Conversely, suppose that the leading coefficient does equal zero. 
Let $k$ be the smallest positive integer such that $c_k \neq 0$.
If $\beta < \pi/2$, then $c_0=0$ and from (\ref{eqn:Bessel-expansion-II})
we find that 
\[
u \left(r \cdot e^{i \theta} \right)~  
=~   
r^{k \cdot \nu} \cdot \cos( k \cdot \nu \cdot \theta)~ 
+~ o\left(r^{k \nu} \right).
\]
Hence using (\ref{eqn:constant-field}) we find that
\[
L_{\psi} u \left(r \cdot e^{i \theta} \right)~  
=~   
k \cdot \nu 
\cdot 
r^{k \cdot \nu-1} \cdot \cos \left( (k \cdot \nu-1)\cdot \theta \right)
+~ o\left(r^{k \nu-1} \right).
\]
Since $\beta< \pi/2$ and $k \geq 1$, the function $\cos((k \cdot \nu-1)\cdot \theta)$  vanishes  
for some $\theta \in (0, \beta)$. An implicit function 
theorem argument establishes the existence of a
smooth arc.

If $\beta> \pi/2$, then a similar argument applies to give the claim.

\end{proof}

\begin{coro}  \label{coro:Lu-arc-perturb}
Let $u$ be a Neumann eigenfunction on a sector of angle 
$\beta \neq \pi/2$ such that one of the first two Bessel coefficients of $u$ is non-zero.  If $L$ is a constant vector field such that
\begin{itemize}

\item some arc in $\Zcal(Lu)$ ends at the vertex, and

\item  $L$ is not orthogonal to a boundary ray of the sector,
\end{itemize}
then for each constant vector field $L'$ that is 
sufficiently close to $L$, some arc of $\Zcal(L'u)$ ends at the vertex.
\end{coro}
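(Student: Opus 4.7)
The plan is to parametrize nonzero constant vector fields by their direction $\psi$, writing $L = \|L\| \cdot L_\psi$ so that $\Zcal(Lu) = \Zcal(L_\psi u)$, and then read off the corollary from Lemma \ref{lem:nodal-sector} as an openness statement about $\psi$. The first step is to observe that in each of the cases (a), (b), (d) of that lemma, the set of ``good'' directions $\psi$---those for which some arc of $\Zcal(L_\psi u)$ ends at the vertex $v$---is a closed interval modulo $\pi$ whose two endpoints are precisely $\pi/2 \mod \pi$ and $\beta - \pi/2 \mod \pi$. A direct check shows that these are exactly the two directions orthogonal to the boundary rays $\theta = 0$ and $\theta = \beta$. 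In case (c), where $\beta = \pi$, the good set collapses to the single direction $\psi = \pi/2$, which is orthogonal to both boundary rays, so this case is incompatible with the corollary's hypothesis that $L$ is not orthogonal to either ray and therefore does not arise.

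With this description of the good interval in hand, the corollary becomes an openness statement. The hypothesis that some arc of $\Zcal(Lu)$ ends at $v$ places the direction $\psi$ of $L$ in the closed good interval, and the hypothesis that $L$ is not orthogonal to either boundary ray places $\psi$ strictly in its interior. Since the map $L' \mapsto \psi'$ assigning its direction to a nonzero constant vector field is continuous at $L$, every $L'$ sufficiently close to $L$ has direction $\psi'$ close to $\psi$ and therefore still in the interior of the good interval. Applying Lemma \ref{lem:nodal-sector} once more at the direction $\psi'$ produces an arc of $\Zcal(L_{\psi'} u) = \Zcal(L' u)$ ending at $v$, which is exactly the desired conclusion.

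The only step requiring additional care is a minor gap in Lemma \ref{lem:nodal-sector} as stated: the case $\beta < \pi/2$ with $c_0 = 0$ but $c_1 \neq 0$ is not covered by part (a) (which requires $c_0 \neq 0$) nor by parts (b)--(d) (which require $\beta \geq \pi/2$). I would close this gap by imitating the proof of part (b) using the leading-order expansion $u \sim c_1 r^\nu \cos(\nu \theta)$ with $\nu = \pi/\beta > 2$: the intermediate-value and implicit-function-theorem arguments produce a closed interval of good directions with endpoints again at the two normal directions. Once this routine extension is in place, the openness argument above applies uniformly, and I do not expect any substantive obstacle beyond this case-check.
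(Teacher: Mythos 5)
Your proof is correct, and it takes a genuinely different route from the paper's. You apply Lemma~\ref{lem:nodal-sector} directly to read off the set of ``good'' directions $\psi$ as a closed interval modulo $\pi$, observe that its two boundary points are precisely the two normal directions $\pi/2$ and $\beta+\pi/2 \equiv \beta-\pi/2 \pmod\pi$, and conclude by openness of the interior. The paper instead argues topologically: it cites Proposition~4.4 of \cite{J-M} to obtain a punctured neighborhood of $v$ free of critical points of $u$, notes by continuity that $L'u$ still changes sign between the two boundary rays so some arc of $\Zcal(L'u)$ must end on the boundary near $v$, and then shows this endpoint must actually be $v$ itself, since otherwise (using that $L'$, like $L$, is not orthogonal to the rays) it would be a critical point of $u$ in that neighborhood. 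Your version is more self-contained, as it avoids the external proposition and exposes the mechanism more explicitly; the paper's version is somewhat more robust in that it never needs the precise interval description. Both arguments rest on Lemma~\ref{lem:nodal-sector}, which as stated does not cover the case $c_0=0$, $c_1\neq 0$, $\beta<\pi/2$; you correctly identified this gap, and your proposed fix via the leading-order term $c_1 r^\nu\cos(\nu\theta)$ with $\nu>2$, imitating part~(b), does close it. One small imprecision worth noting: when $\beta\geq\pi$ with $c_0\neq 0$ and $c_1=0$, the interval $[\pi/2,\pi/2+\beta]$ wraps all the way around $\Rbb/\pi\Zbb$, so the good set is the whole circle and has no endpoints; the phrase ``endpoints at the normal directions'' is not quite accurate there, but the openness conclusion holds a fortiori, so the argument is unaffected.
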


\begin{proof}
Since $\beta \neq \pi/2$, by Proposition 4.4 \cite{J-M}, there exists a neighborhood $N$ of the vertex $v$ of the sector that contains no critical points of $u$.

Since at least one of the zeroth and the first Bessel coefficients of $u$ 
at $v$ is non-zero, by Lemma \ref{lem:nodal-sector}, the set $\Zcal(L u)$ 
contains exactly one arc that ends at $v$. In particular, $u$ has opposite
signs on the the two rays of the sector. By continuity, for each constant
vector field $L'$ that is sufficiently close to $L$, some arc of $\Zcal(L'u)$
must end at some point $p_{L'}$ near $v$ (depending on $L'$) 
that lies on the boundary of the sector.

To finish the proof it suffices to show that $p_{L'}=v$
for $L'$ sufficiently close to $L$.
If $p_{L'}$ is not $v$, then since $L$ is not orthogonal to the sides of the
sector, so is $L'$, and hence,  $p_{L'}$ is a critical point of $u$. 
If $L'$ is sufficiently close then, by continuity, $p_{L'}$ lies in $N$.
This contradicts the first paragraph of the proof.
\end{proof}

Let $S_{\beta}$ denote the sector 
$\{z= r\cdot e^{i \theta}\, :\, \theta \in [0, \beta] \mbox{ mod } \pi \}$.
Recall that $R_w$ denotes the vector field that corresponds to 
rotation about $w \in \Cbb$.

\begin{coro} \label{coro-rotation-nodal-arc}
Let $u$ be an eigenfunction that satisfies Neumann conditions on the sides
$\theta=0$ and $\theta=\beta$.
Suppose that $c_0$ and $c_1$ are not both equal to zero.

\vspace{.1cm}

\begin{enumerate}
\item If  $\beta< \pi/2$

\vspace{.1cm}
\begin{enumerate} 

   \item and $c_0 \neq 0$, then an arc of $\Zcal(R_w)$
      ends at the vertex if and only if $w$ lies in $S_{\beta}$.

\vspace{.1cm}
      
   \item  and $c_0=0$, then an arc of $\Zcal(R_w)$
      ends at the vertex if and only if $w$ does not lie in $S_{\beta}$.

\end{enumerate}

\vspace{.1cm}

\item If  $\pi/2 < \beta < \pi$

\vspace{.1cm}

\begin{enumerate} 

   \item and $c_1 \neq 0$, then an arc of $\Zcal(R_w)$
      ends at the vertex if and only if $w$ does not lie in $S_{\beta}$. 
   
\vspace{.1cm}
   
   \item  and $c_1=0$, then an arc of $\Zcal(R_w)$
      ends at the vertex if and only if $w$ lies in $S_{\beta}$.

\end{enumerate}

\end{enumerate}
\end{coro}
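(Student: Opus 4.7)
My plan is to decompose
\[
R_w~ =~ R_0~ +~ |w|\cdot L_{\alpha - \pi/2}
\]
for $w \neq 0$, where $\alpha := \arg w$, then compute the leading-order Fourier--Bessel expansion of each summand near the vertex, identify the dominant term in each of the four sub-cases, and conclude by the intermediate value theorem and the implicit function theorem, exactly as in the proof of Lemma \ref{lem:nodal-sector}.

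Using (\ref{eqn:Bessel-expansion-II}) and (\ref{eqn:Lpsi-polar}), the $c_0$-term of $u$ contributes $-c_0\,(\mu/2)\,r\,\sin(\alpha - \theta)$ to $L_{\alpha - \pi/2}u$ and nothing to $R_0 u = \partial_\theta u$, while the $c_1$-term contributes $c_1\,\nu\, g_\nu(0)\, r^{\nu - 1}\,\sin\!\bigl(\alpha + (\nu-1)\theta\bigr)$ to $L_{\alpha - \pi/2}u$ and $-c_1\,\nu\, g_\nu(0)\, r^\nu \sin(\nu\theta)$ to $R_0 u$. In Case 1 one has $\nu > 2$ and hence $r > r^{\nu - 1} > r^\nu$ near the vertex; in Case 2 one has $\nu - 1 \in (0, 1)$ and hence $r^{\nu - 1} > r > r^\nu$. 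In each sub-case the $L$-summand dominates $R_0 u$ whenever $w \neq 0$, and the leading term of $R_w u$ takes one of two shapes:
\[
\begin{cases}
-\,|w|\,c_0\,(\mu/2)\,r\,\sin(\alpha - \theta) & \text{in (1a) and (2b)},\\
|w|\,c_1\,\nu\, g_\nu(0)\,r^{\nu-1}\,\sin\!\bigl(\alpha + (\nu-1)\theta\bigr) & \text{in (1b) and (2a)}.
\end{cases}
\]

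To finish, I would analyze when the angular factor vanishes at some $\theta \in [0, \beta]$. The first vanishes iff $\alpha \equiv \theta \pmod \pi$ for some $\theta \in [0, \beta]$, which is equivalent to $w \in S_\beta$. The second vanishes iff $\alpha + (\nu - 1)\theta \equiv 0 \pmod \pi$ for some $\theta \in [0, \beta]$; since $(\nu - 1)\beta = \pi - \beta$, this is equivalent to $\alpha \in [\beta, \pi] \pmod \pi$, i.e.\ to $w \notin S_\beta$. At any interior zero $\theta_0 \in (0, \beta)$ the $\theta$-derivative of the leading factor is nonzero (since $\sin$ and $\cos$ of the same argument cannot both vanish), so the implicit function theorem produces a unique smooth arc of $\Zcal(R_w u)$ ending at the vertex, exactly as in Lemma \ref{lem:nodal-sector}; when $\theta_0 \in \{0, \beta\}$, the arc lies on the corresponding boundary ray. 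The case $w = 0$ is immediate from $R_0 u = \partial_\theta u$, which vanishes on both boundary rays by the Neumann condition.

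The principal obstacle is the bookkeeping across the four sub-cases: correctly identifying which power of $r$ dominates depending on whether $\beta < \pi/2$ or $\beta > \pi/2$ and on which of $c_0, c_1$ is nonzero, and then converting the vanishing condition on $\alpha \bmod \pi$ into the stated geometric condition on $w$ relative to $S_\beta$.
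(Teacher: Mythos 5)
Your proof is correct and takes essentially the same approach as the paper: decompose $R_w$ into $\partial_\theta$ plus a (scaled) constant vector field, observe that the $\partial_\theta u$ contribution is subdominant near the vertex in each case, and reduce to the angular analysis of Lemma~\ref{lem:nodal-sector} with $\psi = \arg w \pm \pi/2$. You carry out the leading-order Bessel computations more explicitly than the paper's one-paragraph argument, but the underlying strategy is identical.
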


\begin{proof}
If $w = \rho  \cdot  e^{i \phi}$, then a computation shows that the rotational
vector field about $w$ takes the form 
\[   
R_{w}~
=~
\partial_{\theta}~ 
+~ 
L_{\phi+ \frac{\pi}{2}}.
\]
Because $|\partial_{\th} u|= o(r^{\nu})$ and $|\partial_{\th}^2 u|= o(r^{\nu})$,
we find that (\ref{eqn:c_1-L-u}) and (\ref{eqn:c_1-theta-L-u}) still hold with 
$L_{\psi}$ replaced by $R_{w}$ and $\psi$ replaced by $\phi+ \pi/2$. 
Thus the argument given in the proof of Lemma \ref{lem:nodal-sector} applies.
\end{proof}

\begin{remk}
Similar statements hold for Dirichlet and mixed boundary conditions.
We leave the formulation of the statements to the reader.
\end{remk}

\section{Critical points on a sector converging to a vertex}
\label{sec:crit-pts-converge-vertex}

Let $S_n$ be a sequence of sectors that converges to a 
sector $S$. Let $u_n: S_n \to \Rbb$ be a sequence of Neumann eigenfunctions
that converges to a Neumann eigenfunction $u:S \to \Rbb$.
In this section we show that if certain types 
of critical points of $u_n$ converge to the vertex of $S$, 
then the two Bessel coefficients of the $u$ must vanish.
Some of these results are straightforward extensions of results 
in \cite{J-M}, but several are new.

Let $b_-$ and $b_+$ denote the distinct boundary rays of the sector $S$. 
Let $c_0$ and $c_1$ denote the respective Bessel coefficients 
of $u$ at the vertex of $S$.  Let $\beta$ denote the vertex angle of $S$,
and let $\nu=\pi/\beta$.

\begin{lem}[Compare Proposition 9.1 \cite{J-M}]
\label{lem:interior-crit-pt-convergence}
For each $n$, let 
$p_n$ be a critical point of $u_n$ that lies in the interior of $S_n$.
If $p_n$ converges to the vertex of $S$, then $c_1=0$.  If, in addition,
$\beta < \pi$, then $c_0=0$. 
\end{lem}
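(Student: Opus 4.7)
The plan is to obtain the vanishing of $c_0$ and $c_1$ by a leading-order analysis of $\nabla u_n$ near the vertex, using the Fourier-Bessel expansion (\ref{eqn:Bessel-expansion-II}). After a rigid motion, place the vertex of $S$ at the origin with sides along $\theta=0$ and $\theta=\beta$. Since $u_n\to u$ and $S_n\to S$, the eigenvalues $\mu_n$ and the Bessel coefficients $c_k^{(n)}$ of $u_n$ at the vertex $v_n$ of $S_n$ converge to $\mu$ and $c_k$ respectively (the coefficients may be recovered from integrals of $u_n$ against $\cos(k\pi\theta/\beta_n)$ on arcs inside the sector); so it suffices to work with the expansion of each $u_n$ and pass to the limit in its dominant scalar coefficients.

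A direct computation from (\ref{eqn:Bessel-expansion-II}) yields the two polar identities
\[
\partial_r u_n(re^{i\theta})=A_n\,r+B_n\,r^{\nu_n-1}\cos(\nu_n\theta)+O\bigl(r^{\min(3,\nu_n+1,2\nu_n-1)}\bigr),
\]
\[
r^{-1}\partial_\theta u_n(re^{i\theta})=-B_n\,r^{\nu_n-1}\sin(\nu_n\theta)+O\bigl(r^{\min(\nu_n+1,2\nu_n-1)}\bigr),
\]
where $A_n=-c_0^{(n)}\mu_n/2$, $B_n=c_1^{(n)}\nu_n g_{\nu_n}(0)$, and $g_{\nu_n}(0)\neq 0$. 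The crucial feature is that $c_0$ does not contribute to $\partial_\theta u$, since the zeroth Bessel mode is rotationally symmetric.

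The heart of the argument is then a projection. Write $p_n=r_n e^{i\theta_n}$ with $r_n\to 0$, and suppose first that $\sin(\nu_n\theta_n)$ stays bounded away from zero (the non-tangential case). Dividing $\partial_\theta u_n(p_n)=0$ by $r_n^{\nu_n-1}$ gives $B_n\sin(\nu_n\theta_n)=O(r_n^{\min(2,\nu_n)})\to 0$, so $B_n\to 0$ and hence $c_1=0$. Dividing $\partial_r u_n(p_n)=0$ by $r_n$ gives $A_n+B_n r_n^{\nu_n-2}\cos(\nu_n\theta_n)+O(r_n^{\min(2,\nu_n-1)})=0$; when $\nu>1$ (i.e.\ $\beta<\pi$), the residual term involving $B_n$ tends to zero (using $c_1^{(n)}\to 0$, with an iterative improvement of the error using the refined bound on $c_0^{(n)}$ if $\beta$ is very small), and we conclude $A=0$, hence $c_0=0$. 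For $\nu<1$ the analogous radial step does not conclude $c_0=0$, consistent with the lemma.

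The main obstacle is to handle the degenerate case $\sin(\nu_n\theta_n)\to 0$, i.e.\ $p_n$ approaches the vertex tangentially along a boundary ray, together with the borderline $\beta=\pi/2$ where $r$ and $r^{\nu-1}$ have the same order and the two leading vectors can cancel. For the tangential situation I would Neumann-reflect $u_n$ across the boundary ray that $p_n$ approaches, obtaining an eigenfunction on the doubled sector whose critical point has angular coordinate bounded away from the new boundary; the non-tangential analysis then applies. For $\beta=\pi/2$ (either as a fixed sector, or as the limit of $\beta_n=\pi/2$ exactly), I would Neumann-reflect across both boundary rays to produce a smooth function on $\Cbb$ that is even in each Cartesian coordinate, so that $\partial_x u_n(x,y)=x\,F_n(x^2,y^2)$ and $\partial_y u_n(x,y)=y\,G_n(x^2,y^2)$ for smooth $F_n,G_n$; the interior critical-point condition at $p_n=(x_n,y_n)$ with $x_n,y_n>0$ forces $F(0)=G(0)=0$, which reads $-c_0\mu/2\pm 2 c_1 g_2(0)=0$, and together these equations give $c_0=c_1=0$. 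For a limit angle $\beta=\pi/2$ approached by $\beta_n\neq\pi/2$, the non-borderline analysis applies along each subsequence on which $\nu_n$ is on one side of $2$, again yielding $c_0=c_1=0$.
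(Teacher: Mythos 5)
Your plan correctly splits the problem into an angular-derivative estimate (to kill $c_1$) and a radial-derivative estimate (to kill $c_0$), and your ``non-tangential'' case is handled essentially correctly. The gap is in your treatment of the tangential case $\sin(\nu_n\theta_n)\to 0$.

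Your proposed fix --- Neumann-reflecting across the ray that $p_n$ approaches and then applying the non-tangential analysis on the doubled sector --- does not work. After reflecting across $\theta=0$ and re-centering the doubled sector $[-\beta_n,\beta_n]$ to $[0,2\beta_n]$ (so $\tilde\nu_n=\nu_n/2$, $\phi_n=\theta_n+\beta_n$), the reflected function is symmetric about $\phi=\beta_n$, which forces every odd-index Bessel coefficient on the doubled sector to vanish; in particular the new $c_1'=0$ identically. The first nontrivial angular term in $\partial_\phi u_n$ on the doubled sector is therefore the $m=2$ term, which carries the \emph{original} $c_1(n)$ and the factor $\sin(2\tilde\nu_n\phi_n)=\sin(\nu_n(\theta_n+\beta_n))=-\sin(\nu_n\theta_n)$. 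So the small factor you were trying to avoid reappears; the ``non-tangential analysis'' applied to the doubled sector only reproves $c_1'=0$, which is vacuous, and gives no control on the original $c_1$. In short, the quantity $\sin(\nu_n\theta_n)$ is intrinsic to the critical-point equation, not an artifact of which boundary ray you expand from, and reflection does not alter it.

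The paper avoids this dichotomy entirely with an algebraic observation: since $\sin\alpha$ divides $\sin(k\alpha)$ for every $k\ge 1$ (with bounded quotient $U_{k-1}(\cos\alpha)$ on $(0,\pi)$), the factor $\sin(\nu_n\theta)$ can be pulled out of the \emph{entire} series for $\partial_\theta u_n$, not merely the leading term:
\[
\partial_\theta u_n\!\left(re^{i\theta}\right)
=
\nu_n\,r^{\nu_n}\,\sin(\nu_n\theta)\,
\Bigl(c_1(n)\,g_{\nu_n}(r^2)+O\!\left(r^{\nu_n}\right)\Bigr).
\]
Because $p_n$ lies in the \emph{interior} of the sector, $0<\nu_n\theta_n<\pi$, so $\sin(\nu_n\theta_n)\neq 0$ and may be cancelled exactly --- no matter how small it is --- from $\partial_\theta u_n(p_n)=0$. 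This yields $c_1(n)=O(r_n^{\nu_n})$ uniformly, without any tangential/non-tangential case split. The radial step and the $\beta<\pi$ condition then proceed as in your non-tangential argument (and, incidentally, your extra care at $\beta=\pi/2$ is unnecessary once the $c_1$ estimate is in hand, since $\nu_n>1$ is all that the radial step uses). To repair your proof, replace the reflection idea with this full factorization of the $\theta$-derivative, or equivalently divide the angular critical-point equation by $\nu_n\theta_n$ (comparable to $\sin(\nu_n\theta_n)$) before passing to the limit.
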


\begin{proof} 
Let $\beta_n$ be the angle of the sector $S_n$ and let 
$\nu_n=\pi/\beta_n$. 
By performing rigid motions if necessary, we may assume without
loss of generality that the vertex $S$ and each of $S_n$ is $0$
and that the boundary rays of $S_n$ are $\theta= 0, \beta_n$. 
Using (\ref{eqn:Bessel-expansion-II}) and the fact that $\sin(\alpha)$ divides 
$\sin(k \alpha)$ for each $k$, we find that
\[ 
\partial_{\theta} u_n\left(r \cdot e^{i\theta}\right)~
=~
\nu_n \cdot r^{\nu_n} \cdot \sin(\nu_n \cdot \theta) \cdot 
\left( c_1(n) \cdot g_{\nu_n}(r) ~
+~
O(r^{\nu_n}) \right). 
\]
Thus, since $p_n = r_n\exp(i \theta_n)$ is a critical point,
$0< \theta_n <\beta_n$, and $g_{\nu}(0) \neq 0$,  we find that 
$c_1(n) = O(r_n^{\nu_n})$.  In particular, since $u_n$ converges
to $u$, we have $c_1= \lim_{n \to \infty} c_1(n)=0$.

From (\ref{eqn:Bessel-expansion-II}), we find 
\[ 
\partial_r u_n\left( r \cdot e^{i n \theta}\right)~
=~
c_0(n) \cdot 2r \cdot g_0'(r^2) ~
+~
c_1(n) \cdot \nu_n \cdot r^{\nu_n-1} 
\cdot g_{\nu_n}(r^2) \cdot\cos(\nu_n \theta)~
+~
O \left(r^{\nu_n} \right).
\]
Thus, since $p_n = r_n\exp(i \theta_n)$ is a critical point, $g_0'(0) \neq0$,
and $c_1(n)= O(r^{\nu_n})$, we find that 
$c_0(n) = O(r^{2 (\nu_n -1)}) + O(r^{\nu_n-1})$.
If $\beta<\pi$, then there exists $\epsilon>0$ 
so that for sufficiently large $n$, we have $\nu_n>1+\epsilon$.
Hence $c_0 = \lim_{n \rightarrow \infty} c_0(n)=0$.
\end{proof}

\begin{lem}[Compare Lemma 9.2 \cite{J-M}]
\label{lem:simult-conv-crit-pts}
For each $n$, let $p_n$ be a critical point of $u_n$ that 
lies in the boundary ray of $S_n$ that converges to $b_-$,
and let $q_n$ be a critical point of $u_n$ that lies in
the boundary ray of $S_n$ that converges to $b_+$.
If the sequences $p_n$ and $q_n$ both converge to the vertex of $S$,
then $c_1=0$. If $\beta < \pi$, then we also have $c_0=0$.
\end{lem}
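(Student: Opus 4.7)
The plan is to adapt the proof of Lemma \ref{lem:interior-crit-pt-convergence} to the setting where the critical points lie on the boundary rays of the sector. Because the Neumann condition forces $\partial_\theta u_n \equiv 0$ along both rays, the hypothesis that $p_n$ and $q_n$ are critical points reduces to the single scalar equation $\partial_r u_n = 0$ at each point. After normalizing the vertex of $S_n$ to the origin with rays $\theta = 0, \beta_n$, termwise differentiation of \refeq{eqn:Bessel-expansion-II} gives
\[
\partial_r u_n\!\left(r e^{i\theta}\right) = 2 c_0(n)\, r g_0'(0) + c_1(n)\, \nu_n r^{\nu_n - 1} g_{\nu_n}(0) \cos(\nu_n \theta) + R_n(r, \theta),
\]
where the remainder $R_n$ collects all subleading terms and satisfies $|R_n(r,\theta)| \leq C (|c_0(n)| r^3 + |c_1(n)| r^{\nu_n + 1} + r^{2\nu_n - 1})$ with $C$ uniform in $n$ (since $u_n \to u$ yields uniform control on every Bessel coefficient).

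Evaluating at $p_n = r_n$ (where $\cos(\nu_n \cdot 0) = 1$) and at $q_n = s_n e^{i\beta_n}$ (where $\cos(\nu_n \beta_n) = \cos \pi = -1$) produces a $2 \times 2$ linear system in the unknowns $c_0(n), c_1(n)$, whose right-hand side is $(-R_n(r_n, 0), -R_n(s_n, \beta_n))$. Dividing the first equation by $r_n$ and the second by $s_n$ and then subtracting isolates $c_1(n)$: the coefficient of $c_1(n)$ becomes the strictly positive quantity $\nu_n g_{\nu_n}(0)(r_n^{\nu_n - 2} + s_n^{\nu_n - 2})$, and matching this against the divided remainder forces $c_1(n) \to 0$. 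Adding the two divided equations instead isolates $c_0(n)$, and combined with the just-obtained rate for $c_1(n)$, this forces $c_0(n) \to 0$ provided $\nu > 1$, i.e., $\beta < \pi$.

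The principal obstacle is justifying that the divided remainder is $o(r_n^{\nu_n - 2} + s_n^{\nu_n - 2})$, whose status depends delicately on the size of $\nu$. When $1 < \nu < 2$ (i.e., $\pi/2 < \beta < \pi$) the coefficients $r^{\nu_n - 2}$ blow up as $r \to 0$ and the comparison is immediate. When $\nu > 2$ (i.e., $\beta < \pi/2$) the comparison fails for $\nu > 4$ because the contribution $|c_0(n)|\,r^2$ in the divided remainder is no longer dominated by $r^{\nu_n - 2}$ a priori; I would overcome this with a two-step bootstrap. The equation at $p_n$ alone yields the crude estimate $|c_0(n)| \leq C|c_1(n)|\, r_n^{\nu_n - 2} + C r_n^{2\nu_n - 2}$; substituting this into the bound derived from the equation at $q_n$ and absorbing the small $|c_1(n)|\, s_n^2$ term into the left-hand side yields $|c_1(n)| = O(s_n^{\nu_n})$, which closes the loop and in turn gives $|c_0(n)| = O(s_n^{2\nu_n - 2})$. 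The reflex case $\beta > \pi$ is handled analogously but only delivers $c_1 \to 0$, matching the weaker conclusion of the statement (since the bootstrap for $c_0$ breaks down when $\nu < 1$); the degenerate case $\beta = \pi/2$ follows by direct passage to the limit, as the factors $r^{\nu_n - 2}, s^{\nu_n - 2}$ tend to positive constants and the two limiting equations give $c_0 g_0'(0) = \pm c_1 g_2(0)$, forcing $c_0 = c_1 = 0$.
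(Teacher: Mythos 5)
Your outline matches the paper's: write the vanishing radial-derivative conditions at $p_n = r_n$ and $q_n = s_n e^{i\beta_n}$ using the Bessel expansion, normalize both equations so that the $c_0(n)$ coefficients become identical, subtract to isolate $c_1(n)$, and add to isolate $c_0(n)$. The crucial difference is that the paper \emph{does not truncate} the Bessel functions at $r = 0$. It keeps the full $2 r g_0'(r^2)$ as the $c_0(n)$ coefficient and divides each equation by exactly that quantity; the $c_0(n)$ coefficient then becomes identically $1$ in both equations, and the subtraction cancels $c_0(n)$ with \emph{no} $c_0(n)$-dependent remainder whatsoever. The resulting inequality $|c_1(n)|\,(r_n^{\nu_n-2} + s_n^{\nu_n-2}) \lesssim r_n^{\nu_n} + s_n^{\nu_n} + r_n^{2\nu_n-2} + s_n^{2\nu_n-2}$ then immediately gives $|c_1(n)| \lesssim r_n^2 + s_n^2 + r_n^{\nu_n} + s_n^{\nu_n} \to 0$ uniformly for all $\nu > 0$, with no case split and no bootstrap. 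Your truncation to $g_0'(0)$ pushes an error $|c_0(n)|\,r^3$ into $R_n$, which after division by $r$ survives the subtraction as $|c_0(n)|(r_n^2 + s_n^2)$; as you correctly diagnose, this ruins the comparison when $\nu > 4$, and it is precisely what the paper's device eliminates.

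The bootstrap you propose to repair this has a genuine gap. The crude estimate from the $p_n$ equation reads $|c_0(n)| \leq C|c_1(n)| r_n^{\nu_n-2} + Cr_n^{2\nu_n-2}$ with $C \approx \nu_n g_{\nu_n}(0)/(2g_0'(0))$; the $q_n$ equation, normalized so that $c_1(n)$ appears with coefficient $1$, gives $|c_1(n)| \leq C'|c_0(n)| s_n^{2-\nu_n} + \cdots$ with $C' \approx 2g_0'(0)/(\nu_n g_{\nu_n}(0))$. Substituting, the coefficient of $|c_1(n)|$ on the right is $CC'(r_n/s_n)^{\nu_n-2} \approx (r_n/s_n)^{\nu_n-2}$. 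Nothing in the hypotheses controls the ratio $r_n/s_n$: when $r_n \geq s_n$ and $\nu > 2$ this coefficient is $\geq 1$ and cannot be absorbed, and even when $r_n < s_n$ it need not be bounded below $1$, so the absorption step you invoke is not justified. (The fix is either to take a WLOG on $\min(r_n,s_n)$ \emph{and} substitute into the subtracted inequality rather than the single $q_n$ equation, or simply to adopt the paper's exact normalization.) Separately, your treatment of $\beta = \pi/2$ asserts that $r_n^{\nu_n - 2}$ and $s_n^{\nu_n - 2}$ tend to positive constants; this is false in general, since $(\nu_n - 2)\ln r_n$ can tend to any limit in $[-\infty, 0]$ depending on the relative rates. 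The paper's bound handles $\nu = 2$ without any such assumption, since $r^{\nu}/r^{\nu-2} = r^2 \to 0$ and $r^{2\nu-2}/r^{\nu-2} = r^{\nu} \to 0$ for every $\nu > 0$.
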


\begin{proof}
Let $\beta_n$ be the angle of the sector $S_n$. 
By performing rigid motions if necessary, we may assume without
loss of generality that the vertex $S$ and each of $S_n$ is $0$
and that the boundary rays of $S_n$ are $\theta= 0, \beta_n$. 
Thus, there exist sequences $r_n$ and $s_n$
so that $p_n = r_n$ and $q_n = s_n e^{i\beta_n}$.

From (\ref{eqn:Bessel-expansion-II}) we find that 
\begin{eqnarray*}
\partial_r u_n(r)
&=& 
c_0(n) \cdot 2r \cdot g_0'(r^2) ~
+~
c_1(n) \cdot \nu_n \cdot r^{\nu_n-1} \cdot g_{\nu_n}(r^2)~
+~
O \left(r^{\nu_n+1}  + r^{2 \nu_n-1} \right)
\\
\partial_r u_n \left(s \cdot e^{i \beta_n} \right) 
&=&  
c_0(n) \cdot 2s \cdot g_0'(s^2)~
-~
c_1(n) \cdot \nu_n \cdot s^{\nu_n-1} \cdot g_{\nu_n}(s^2)~
+~
O \left(s^{\nu_n+1}  + s^{2 \nu_n-1}  \right).
\end{eqnarray*}
Since $p_n = r_n$ and $q_n= s_n e^{i\beta_n}$ are critical points, the radial 
derivative of $u_n$ vanishes at these points, and hence 
\begin{eqnarray} 
0
&=& 
c_0(n) \cdot 2r_n \cdot g_0'(r_n^2) ~
+~
c_1(n) \cdot \nu_n \cdot r_n^{\nu_n-1} \cdot g_{\nu_n}(r_n^2)~
+~
O \left(r_n^{\nu_n+1}  + r_n^{2 \nu_n-1}  \right)
\label{eqn:distinct-sides-r}
\\
0
&=&  
c_0(n) \cdot 2s_n \cdot g_0'(s_n^2)~
-~
c_1(n) \cdot \nu_n \cdot s_n^{\nu_n-1} \cdot g_{\nu_n}(s_n^2)~
+~
O\left(s_n^{\nu_n+1}  + s^{2 \nu_n-1} \right).  
\label{eqn:distinct-sides-s}
\end{eqnarray}
Let $a_{\nu}(r)= 2 g_0'(r^2)/g_{\nu}(r^2)$.
Because, the functions $g_0'$ and $g_{\nu}$ are continuous and 
positive near zero, so is $a_{\nu}$. From 
(\ref{eqn:distinct-sides-r}) and (\ref{eqn:distinct-sides-s}) we find that
\begin{equation} \label{eqn:c_0-eqn}
c_0(n) 
\cdot 
\left( a_{\nu_n}(r_n) \cdot r_n^{2-\nu_n}~
+~
a_{\nu_n}(s_n) \cdot s_n^{2 - \nu_n} \right)~ 
=~ 
O \left(  r_n^2 + s_n^2~ +~ r_n^{\nu_n} + s_n^{\nu_n} \right),
\end{equation}
and 
\begin{equation} \label{eqn:c_1-eqn}
c_1(n) 
\cdot 
\left( \frac{r_n^{\nu_n-2}}{ a_{\nu_n}(r_n)}~
+~
\frac{s_n^{\nu_n-2}}{a_{\nu_n}(s_n)} \right)~ 
=~ 
O \left(  r_n^{\nu_n}+ s_n^{\nu_n}~ +~  r_n^{2\nu_n-2}+ s_n^{2\nu_n-2} \right).
\end{equation}
It follows from  (\ref{eqn:c_1-eqn}) that 
$c_1(n) = O(r_n^2 + s_n^2 + r_n^{\nu_n} + s_n^{\nu_n})$.
Since $\nu_n$ tends to $\nu>0$, we have $c_1= \lim_{n \to \infty} c_1(n) =0$.

It follows from  (\ref{eqn:c_0-eqn}) that 
$c_0(n)= O(r_n^{\nu_n} + s_n^{\nu_n} + r_n^{2\nu_n-2} + s_n^{2\nu_n-2})$.
If $\beta< \pi$, then there exists $\epsilon>0$ so that 
$\nu_n > 1+ \epsilon$ and for sufficiently large $n$.
Thus, for $n$ sufficiently large, we have 
$c_0(n)= O(r_n^{1+ \epsilon} + s_n^{1+ \epsilon}+ 
r_n^{2\epsilon}+s_n^{2\epsilon})$. 
Therefore, $c_0= \lim_{n \to \infty} c_0(n) =0$.
\end{proof}

\begin{lem}[Compare Lemma 9.3 \cite{J-M}] 
\label{lem:convergence-to-vertex-coefficient-to-zero}
Let $p_n$ be a critical point of $u_n$
and suppose that $p_n$ converges to the vertex of $S$. 
If $\beta< \pi/2$, then $c_0=0$.  
If $\beta > \pi/2$,  then $c_1=0$. 
\end{lem}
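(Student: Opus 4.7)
The plan is to exploit the asymptotic expansions of $\partial_r u_n$ and $\partial_\theta u_n$ near the vertex, derived from the Bessel expansion (\ref{eqn:Bessel-expansion-II}), splitting into the two cases $\nu>2$ (i.e.\ $\beta<\pi/2$) and $\nu<2$ (i.e.\ $\beta>\pi/2$). The key uniformity is that for any critical point $p_n=r_n e^{i\theta_n}$ of $u_n$, whether interior or on one of the boundary rays of $S_n$, the identity $\partial_r u_n(p_n)=0$ always holds: on the rays the Neumann condition already accounts for the angular derivative, leaving only the radial equation.

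In the first case, with $\nu>2$, the relevant expansion is
\[
\partial_r u_n\!\left(re^{i\theta}\right)~=~2c_0(n)\,r\,g_0'(r^2)~+~c_1(n)\,\nu_n\,r^{\nu_n-1}\,g_{\nu_n}(r^2)\cos(\nu_n\theta)~+~O\!\left(r^3+r^{\nu_n+1}+r^{2\nu_n-1}\right),
\]
in which the $c_0$ contribution is the dominant term of order $r$, since $\nu_n-1>1$ for large $n$. Dividing $\partial_r u_n(p_n)=0$ by $r_n$ and letting $n\to\infty$, the $c_1$ contribution becomes $O(r_n^{\nu_n-2})$ and hence vanishes (as $c_1(n)$ is bounded by the convergence $u_n\to u$), while $g_0'(r_n^2)\to g_0'(0)\neq 0$. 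One concludes $c_0(n)\to 0$ and therefore $c_0=0$.

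In the second case, $\nu<2$, the $c_1$ term dominates. I would divide $\partial_r u_n(p_n)=0$ by $r_n^{\nu_n-1}$ and, after extracting a subsequence along which $\theta_n\to\theta_\infty\in[0,\beta]$, pass to the limit to obtain
\[
c_1\,\nu\,g_\nu(0)\,\cos(\nu\theta_\infty)~=~0.
\]
If $\cos(\nu\theta_\infty)\neq 0$, this yields $c_1=0$ directly; this subcase already covers the boundary situations $\theta_\infty=0$ and $\theta_\infty=\beta$. The delicate subcase is $\cos(\nu\theta_\infty)=0$, which forces $\theta_\infty=\beta/2$ and hence $p_n$ lies in the interior of $S_n$ for large $n$. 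In that event the angular equation $\partial_\theta u_n(p_n)=0$ is also available, and its leading expansion $-c_1(n)\,\nu_n\,r_n^{\nu_n}\,g_{\nu_n}(r_n^2)\sin(\nu_n\theta_n)+O(r_n^{2\nu_n})=0$, divided by $r_n^{\nu_n}$, converges to a nonzero multiple of $c_1\sin(\nu\theta_\infty)=c_1$, so again $c_1=0$.

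The main obstacle is precisely the degenerate angle $\theta_\infty=\beta/2$ in the second case, where the radial equation alone is inconclusive; the remedy is exactly the one used in Lemma \ref{lem:interior-crit-pt-convergence}, namely invoking the angular equation at an interior critical point. Everything else is careful bookkeeping of orders of $r$ in (\ref{eqn:Bessel-expansion-II}), supported by the facts that $g_0'(0)$ and $g_\nu(0)$ are nonzero.
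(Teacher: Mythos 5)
Your argument is correct and follows essentially the same route as the paper: both rest on the radial-derivative expansion at $p_n$ and isolate the dominant Bessel coefficient according to whether $\nu>2$ or $\nu<2$, falling back on the angular derivative to handle interior critical points. The only difference is organizational --- the paper first cites Lemma~\ref{lem:interior-crit-pt-convergence} to reduce at once to the case $p_n$ on the boundary ray $\theta=0$ (so $\cos(\nu_n\theta_n)\equiv 1$), whereas you carry the factor $\cos(\nu_n\theta_n)$ along and split off the single degenerate angle $\theta_\infty=\beta/2$ at the end.
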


\begin{proof}
By performing rigid motions if necessary, we may assume without
loss of generality the boundary rays of $S_n$ are $\theta= 0$
and $\theta=\beta_n$. 
By Lemma \ref{lem:interior-crit-pt-convergence} passing
to a subsequence, and applying a reflection across $\theta=\beta_n/2$ 
if necessary, we may assume, without loss
of generality, that $p_n=r_n$ lies in the positive real axis.
As in the proof of Lemma \ref{lem:simult-conv-crit-pts} 
we have 
\begin{equation} \label{eqn:r-derivative}
0~
=~
c_0(n) \cdot 2r_n \cdot g_0'(r_n^2) ~
+~
c_1(n) \cdot \nu_n \cdot r_n^{\nu_n-1} \cdot g_{\nu_n}(r_n^2)~
+~
O \left(r^{\nu_n+1}  + r^{2 \nu_n-1} \right).
\end{equation}
If $\beta< \pi/2$, then there exists $\epsilon>0$ so 
that $\nu_n > 2 + \epsilon$ for $n$ sufficiently large. 
Hence, since $g_0'(0)\neq0$,
it follows from (\ref{eqn:r-derivative}) that 
$c_0(n) = O(r_n^{\epsilon})$. It follows that $c_0=0$.

From (\ref{eqn:r-derivative}), we have 
$
c_1(n)~
=~
O(r_n^{2-\nu_n})~
+~
O \left(r_n^2 + r_n^{\nu_n} \right).
$
If $\beta > \pi/2$, then  there exists $\epsilon>0$ so 
that $\epsilon< \nu_n < 2 - \epsilon$ for $n$ sufficiently large. 
Hence, since $g_{\nu}(0)\neq 0$,
it follows from (\ref{eqn:r-derivative}) that 
$c_1(n) = O(r_n^{\epsilon})$. Thus $c_1= \lim c_1(n)=0$.
\end{proof}

\begin{lem} \label{lem:two-crit-to-acute}
Suppose that $\beta \neq \pi/2$ and $\beta <\pi$.
Suppose that for each $n$ the sector $S_n$ is bounded by the rays 
$\theta= 0$ and $\theta =\beta_n$, and there exist $0< r_n \leq s_n$ 
such that $\partial_r u(r_n)=0$ and $\partial_r^2 u(s_n)=0$.  
If $s_n$ converges to zero as $n$ tends to infinity,
then $c_0=0=c_1$.
\end{lem}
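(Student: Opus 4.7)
My plan is to combine the hypotheses $\partial_r u_n(r_n) = 0$ and $\partial_r^2 u_n(s_n) = 0$ into a $2 \times 2$ linear system for the leading Bessel coefficients $(c_0(n), c_1(n))$ and to exploit quantitative invertibility of this system to force both coefficients to tend to zero.

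After a rigid motion we may place the vertex of each $S_n$ at the origin with boundary rays $\theta = 0$ and $\theta = \beta_n$. Differentiating the Fourier--Bessel expansion (\ref{eqn:Bessel-expansion-II}) along $\theta = 0$ yields
\[
\partial_r u_n(r) = 2 c_0(n) r g_0'(r^2) + c_1(n) \nu_n r^{\nu_n - 1} g_{\nu_n}(r^2) + O\bigl(r^{\nu_n + 1} + r^{2\nu_n - 1}\bigr),
\]
\[
\partial_r^2 u_n(r) = 2 c_0(n) g_0'(r^2) + c_1(n) \nu_n(\nu_n - 1) r^{\nu_n - 2} g_{\nu_n}(r^2) + O\bigl(r^{\nu_n} + r^{2\nu_n - 2}\bigr),
\]
where $\nu_n = \pi/\beta_n$, and the error terms absorb all contributions from $c_k(n)$ with $k \geq 2$. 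Setting the first expression to zero at $r = r_n$ and the second at $r = s_n$ produces a linear system in $(c_0(n), c_1(n))$ whose determinant, up to the nonzero constant factor $2 \nu_n g_0'(0) g_\nu(0)$ and a $(1+o(1))$ correction, equals $r_n \cdot \Delta_n$, where
\[
\Delta_n := (\nu_n - 1) s_n^{\nu_n - 2} - r_n^{\nu_n - 2}.
\]

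The main step is to bound $|\Delta_n|$ from below using $r_n \leq s_n$, $\nu := \pi/\beta > 1$, and $\nu \neq 2$. If $\nu_n > 2$ (i.e., $\beta < \pi/2$) then $r_n^{\nu_n - 2} \leq s_n^{\nu_n - 2}$, so $\Delta_n \geq (\nu_n - 2) s_n^{\nu_n - 2} > 0$; if $1 < \nu_n < 2$ (i.e., $\pi/2 < \beta < \pi$) then $r_n^{\nu_n - 2} \geq s_n^{\nu_n - 2}$ and $\nu_n - 1 < 1$, hence $(\nu_n - 1) s_n^{\nu_n - 2} < r_n^{\nu_n - 2}$, giving $|\Delta_n| \geq (2 - \nu_n) r_n^{\nu_n - 2}$. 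Applying Cramer's rule, combined with $r_n \leq s_n$ and routine manipulation of monomial powers, yields an upper bound of the form
\[
|c_0(n)| + |c_1(n)| \leq C \bigl( s_n^{\nu_n} + s_n^{2\nu_n - 2} + r_n^{\nu_n} + r_n^{2\nu_n - 2} \bigr),
\]
which goes to zero as $s_n \to 0$ since every exponent is positive. Because $u_n \to u$ forces $c_k(n) \to c_k$ for each $k$, we conclude $c_0 = c_1 = 0$.

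\textbf{Main obstacle.} The delicate point is the lower bound on $|\Delta_n|$ in the regime $1 < \nu_n < 2$, where both $r_n^{\nu_n - 2}$ and $s_n^{\nu_n - 2}$ individually diverge as $r_n, s_n \to 0$; one must verify that the linear combination defining $\Delta_n$ does not cancel faster than its individual terms, which is where the inequality $r_n \leq s_n$ and the strict bound $\nu_n < 2$ (equivalently $\beta > \pi/2$) enter decisively.
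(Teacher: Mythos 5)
Your proof is correct and takes essentially the same approach as the paper: both derive a $2\times 2$ linear system for $(c_0(n),c_1(n))$ from the two vanishing conditions and then bound the relevant determinant-like quantity from below, using $r_n\le s_n$ together with $\nu\ne 1,2$, to conclude that both coefficients tend to zero. The paper organizes this as two successive eliminations (defining $a_\nu$, $b_\nu$ and subtracting), while you invoke Cramer's rule directly—cosmetically different but the same mechanism. One small omission: in your formula for $\partial_r^2 u_n$ the coefficient of $c_0(n)$ should be $2g_0'(r^2)+4r^2 g_0''(r^2)$ rather than $2g_0'(r^2)$ (the paper's $b_\nu$); this extra term is a $1+O(r^2)$ multiplicative correction and does not affect the argument, but it cannot be silently shifted into the $O(r^{\nu_n}+r^{2\nu_n-2})$ error when $\nu_n>2$.
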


\begin{proof}

Because $0<\beta< \pi$ and $\beta_n \to \beta$, there exists 
$\delta > 0$ such that
$\pi \cdot \delta <\beta_n < \pi \cdot (1+ \delta)^{-1}$ 
and hence $\delta^{-1} > \nu_n > \nu_n -1 > \delta$. 
From (\ref{eqn:Bessel-expansion-II}) we have 
\begin{eqnarray}
\partial_r u_n(r)
&=& 
c_0(n) \cdot 2r \cdot g_0'(r^2) ~
+~
c_1(n) \cdot \nu_n \cdot r^{\nu_n-1} \cdot g_{\nu_n}(r^2)~
+~
O \left(r^{\nu_n+1}  + r^{2 \nu_n-1} \right)
\label{eqn:r-derivativeII}
\\
\partial_r^2 u_n \left(s\right) 
&=&  
c_0(n) \cdot \left( 2 \cdot g_0'(s^2) + 4 s^2 \cdot g_0''(s^2) \right)~
+~
c_1(n) \nu_n  (\nu_n-1) s^{\nu_n-1}  g_{\nu_n}(s^2)~
+~
O \left(s^{\nu_n}  + s^{2 \nu_n-2}  \right). \nonumber
\end{eqnarray}
Let 
\[ 
a_{\nu}(r)~
=~
\frac{2 g_0'(r^2)}{ \nu \cdot g_{\nu}(r^2)}
\]
and 
\[ 
b_{\nu}(s)~ 
=~
\frac{2 g_0'(s^2) + 4 s^2 \cdot g_0''(s^2)}{ \nu \cdot g_{\nu}(s^2)}.
\]
Because $g_{\nu}$ and its derivatives are positive and continuous
for $r$ near zero, the functions $a_{\nu}$ and $b_{\nu}$ are also
positive and continuous for small $r$. Note that 
$a_{\nu}(0)/b_{\nu}(0) =1$.

Since $\partial_r u(r_n)=0$ and $\partial_r^2 u(s_n)=0$ we find 
from (\ref{eqn:r-derivativeII}) that
\begin{eqnarray}
0
&=&
c_0(n) \cdot a_{\nu_n}(r_n) \cdot r_n^{2-\nu_n}~
+~
c_1(n)~
+~
O \left(r_n^{2}  + r_n^{\nu_n} \right)
\label{eqn:a-b}
\\
0 
&=&  
c_0(n) \cdot \frac{b_{\nu_n}(s_n)}{ \nu_n-1} \cdot s_n^{2 -\nu_n}~
+~
c_1(n)~ 
+~
O \left(s_n^{2}  + s_n^{\nu_n}  \right). \nonumber
\end{eqnarray}
By subtracting we have 
\begin{equation} 
\label{eqn:c_0-bound}  
c_0(n)
\cdot 
\left(
~a_{\nu_n}(r^{\nu_n}) \cdot r^{2 -\nu_n}~
-~
\frac{b_{\nu_n}(s_n)}{ \nu_n-1} \cdot s_n^{2 -\nu_n}
\right)~
=~
O\left(
r_n^{2}  + r_n^{\nu_n}
+
s_n^{2}  + s_n^{\nu_n} 
\right)
\end{equation}

Suppose $\beta > \pi/2$, then $\nu<2$ and so since $\nu_n \to \nu$
there exists $\epsilon>0$ so that for sufficiently large $n$
\begin{equation}
\label{eqn:b-over-a}
\frac{1}{\nu_n-1} \cdot \frac{b_{\nu_n}(s_n)}{a_{\nu_n}(r_n)}~ 
\geq~ 
1+ \epsilon.
\end{equation}
Since $r_n \leq s_n$, we have $r_n^{2-\nu_n} \leq s_n^{2-\nu_n}$.
Therefore, from (\ref{eqn:c_0-bound}) we find that 
\[  
c_0(n) \cdot (-\epsilon) \cdot a(r_n) \cdot s^{2-\nu_n}~
=~
O\left(
r_n^{2}  + r_n^{\nu_n}
+
s_n^{2}  + s_n^{\nu_n} 
\right)
\]
Thus, since $r_n \leq s_n$ we find that
$c_0(n) = O(r_n^{\nu_n} + s_n^{\nu_n}+ r_n^{2\nu_n-2} + s_n^{2\nu_n-2})$,
and hence 
\begin{equation}
c_0(n)~
=~
O \left(r_n^{1+\delta} + s_n^{1+\delta}+ r_n^{2\delta} + s_n^{2 \delta}
\right).
\end{equation}
Therefore, $c_0=\lim c_0(n)=0$.

Suppose $\beta < \pi/2$. Then since $\nu_n \to \nu >2$, there exists
$\epsilon>0$ so that for sufficiently large $n$
\begin{equation}
\label{eqn:a-over-b}
(\nu -1) \cdot \frac{ a_{\nu_n}(r_n)}{b_{\nu_n}(s_n)}~ 
\geq~
1+\epsilon.
\end{equation}
Since $r_n^{2 -\nu_n} \geq s^{2-\nu_n}$, from (\ref{eqn:c_0-bound})
one deduces that $c_0=0$ in this case by arguing in a similar
manner.

To show that $c_1=0$, we argue similarly. From (\ref{eqn:a-b})
we find that 
\begin{eqnarray*}
0
&=&
c_0(n)~ 
+~
c_1(n) \cdot \frac{r_n^{\nu_n-2}}{a_{\nu_n}(r_n)}~
+~
O \left(r_n^{2}  + r_n^{\nu_n} \right)
\\
0 
&=&  
c_0(n)~  
+~
c_1(n) 
\cdot  
\frac{ (\nu_n-1) \cdot s_n^{\nu_n-2}}{b_{\nu_n}(s_n)}~
+~
O \left(s_n^{2}  + s_n^{\nu_n}  \right). \nonumber
\end{eqnarray*}
and hence by subtracting
\[ 
c_1(n)
\cdot
\left(
\frac{r_n^{\nu_n-2}}{a_{\nu_n}(r_n)}~
-~
\frac{ (\nu_n-1) \cdot s_n^{\nu_n-2}}{b_{\nu_n}(s_n)}
\right)
=~
O \left(r_n^{2}  + r_n^{\nu_n} + s_n^{2}  + s_n^{\nu_n}  \right).
\]
Now argue as was done to show that $c_0=0$.
In particular, in the case  $\beta < \pi/2$
use (\ref{eqn:a-over-b}), and in the case $\beta> \pi/2$
use (\ref{eqn:b-over-a}).

\end{proof}

\begin{coro}  \label{coro:two-crit-to-acute}

Suppose $\beta < \pi$ and $\beta \neq \pi/2$. 
Suppose that for each $n$, the points $p_n$ and $q_n$ are
distinct critical points.  If $p_n$ and $q_n$ both converge to 
the vertex of $S$, then $c_0=0=c_1$.
\end{coro}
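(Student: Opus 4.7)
The strategy is to reduce to the three preceding lemmas by a case analysis on where $p_n$ and $q_n$ lie in the closure of $S_n$. As in the previous proofs, we may perform rigid motions so that the boundary rays of $S_n$ are $\theta = 0$ and $\theta = \beta_n$. After passing to a subsequence, each of $p_n$ and $q_n$ consistently lies in the interior of $S_n$, on the ray $\theta = 0$, or on the ray $\theta = \beta_n$.

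If one of the sequences, say $p_n$, lies in the interior of $S_n$, then Lemma \ref{lem:interior-crit-pt-convergence} applied to $p_n$ yields $c_1 = 0$, and since $\beta < \pi$ the same lemma gives $c_0 = 0$. If both lie on the boundary but on distinct rays, then Lemma \ref{lem:simult-conv-crit-pts} (again using $\beta < \pi$ for the $c_0$ conclusion) gives $c_0 = 0 = c_1$. Both of these invocations are direct, and exhaust every configuration in which $p_n$ and $q_n$ can be separated into different boundary/interior strata.

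The remaining case --- both critical points on the same boundary ray --- is the one the previous lemmas do not directly cover, and is the main point of the corollary. Applying the reflection $\theta \mapsto \beta_n - \theta$ if needed, we may assume both lie on $\theta = 0$, so $p_n = r_n$ and $q_n = r'_n$ are positive reals with both tending to $0$. Since $p_n$ and $q_n$ are distinct, we may suppose $0 < r_n < r'_n$. On this ray the Neumann condition forces $\partial_\theta u_n \equiv 0$, so criticality reduces to $\partial_r u_n(r_n) = 0 = \partial_r u_n(r'_n)$. Applying Rolle's theorem to the smooth function $r \mapsto \partial_r u_n(r)$ on the interval $[r_n, r'_n]$ produces $s_n \in (r_n, r'_n)$ with $\partial_r^2 u_n(s_n) = 0$. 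Because $r_n \leq s_n \leq r'_n \to 0$, and $\beta \neq \pi/2$ and $\beta < \pi$ by hypothesis, Lemma \ref{lem:two-crit-to-acute} applied to the pair $(r_n, s_n)$ delivers $c_0 = 0 = c_1$. The main obstacle is precisely this same-side case; the Rolle-theorem trick producing a vanishing $\partial_r^2 u_n$ between two radial critical points is exactly what Lemma \ref{lem:two-crit-to-acute} was designed to exploit, which is also why that lemma was stated in its asymmetric form mixing a first and second radial derivative.
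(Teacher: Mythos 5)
Your proof is correct and follows essentially the same route as the paper: reduce to the same-ray case via Lemmas \ref{lem:interior-crit-pt-convergence} and \ref{lem:simult-conv-crit-pts}, then apply Rolle's theorem to $\partial_r u_n$ to produce $s_n$ with $\partial_r^2 u_n(s_n)=0$ and invoke Lemma \ref{lem:two-crit-to-acute}. The only cosmetic difference is that you spell out the subsequence extraction and case split more explicitly than the paper does.
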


\begin{proof}
By applying rigid motions we may assume that $S_n$
is bounded by the rays $\theta= 0$ and $\theta= \beta_n$. 
By Lemma \ref{lem:interior-crit-pt-convergence}
and Lemma \ref{lem:simult-conv-crit-pts}, it suffices to assume
that $p_n$ and $b_n$ lie in the same boundary ray, and by 
reflecting if necessary about $\theta=\beta_n/2$, we may 
assume that both $p_n$ and $q_n$ are real.
By relabeling we may assume that $p_n < q_n$.
By assumption $\partial_r (p_n)= 0 = \partial_r (q_n)$,
and so Rolle's theorem implies that there exist $s_n$
such that $p_n \leq s_n \leq q_n$ and $\partial_r^2(s_n)=0$.
The claim now follows from Lemma \ref{lem:two-crit-to-acute}.
\end{proof}

\begin{coro} \label{coro:accum-at-vertex-Bessel-vanish}
Let $S$ be a sector with angle $\beta < \pi$ and not equal to $\pi/2$,
and let $u: S \to \Rbb$ be a Neumann eigenfunction.
If the vertex $v$ is an accumulation point of the critical points of $u$,
then $c_0=0=c_1$.
\end{coro}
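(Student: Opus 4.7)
Since $v$ is an accumulation point of the critical set $\crit(u)$, we may extract a sequence $(x_k)_{k \in \Nbb}$ of pairwise distinct critical points of $u$ with $x_k \to v$. Set $p_n = x_{2n-1}$ and $q_n = x_{2n}$. Then $p_n \neq q_n$ for each $n$, and both sequences converge to $v$.

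Now apply Corollary \ref{coro:two-crit-to-acute} to the constant sequence of sectors $S_n := S$ and the constant sequence of eigenfunctions $u_n := u$. The hypotheses of the corollary are satisfied: $\beta < \pi$ and $\beta \neq \pi/2$, and $p_n, q_n$ are distinct critical points of $u_n = u$ that both converge to the vertex of $S$. The conclusion gives $c_0 = c_1 = 0$, since the Bessel coefficients of $u_n$ coincide with those of $u$ for every $n$.
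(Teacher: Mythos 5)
Your proof is correct and takes essentially the same approach as the paper, which simply says to apply Corollary \ref{coro:two-crit-to-acute} with $S_n = S$ and $u_n = u$; you have just spelled out the extraction of the two sequences of distinct critical points.
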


\begin{proof}
Apply Corollary \ref{coro:two-crit-to-acute} with $S_n=S$ and $u_n=u$.
\end{proof}

\begin{lem} \label{lem:degenerate-converge-vertex}
Suppose $\beta < \pi$ and $\beta \neq \pi/2$.
If $p_n$ is a degenerate critical point of $u_n$ that converges
to the vertex of $S$, then $c_0=0=c_1$. 
\end{lem}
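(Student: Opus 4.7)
The plan is to reduce to two cases via a Cartesian Hessian calculation at a boundary critical point, then dispatch one case by invoking Lemma \ref{lem:two-crit-to-acute} directly and the other by a Bessel-coefficient computation modeled on the proof of Lemma \ref{lem:interior-crit-pt-convergence}. First I would separate the interior and boundary cases. If a subsequence of $p_n$ consists of interior critical points, then Lemma \ref{lem:interior-crit-pt-convergence} gives $c_1=0$, and since $\beta<\pi$ it also gives $c_0=0$, and the lemma is already proved. So passing to a subsequence I may assume each $p_n$ lies on a boundary ray of $S_n$; after a rigid motion and, if necessary, the reflection $\theta\mapsto \beta_n-\theta$, I may place $p_n=r_n$ on the ray $\theta=0$.

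Next I would compute the Cartesian Hessian of $u_n$ at $p_n$. Since $u_n$ satisfies Neumann conditions on $\theta=0$, the tangential derivative $\partial_\theta u_n$ vanishes identically along that ray; in particular $\partial_r\partial_\theta u_n(r_n,0)=0$, and since $p_n$ is critical, $\partial_r u_n(r_n,0)=0$ as well. A short calculation in polar coordinates then shows that the Cartesian Hessian at $p_n$ is diagonal with entries $\partial_r^2 u_n(r_n,0)$ and $r_n^{-2}\partial_\theta^2 u_n(r_n,0)$, the off-diagonal entry vanishing precisely because $u_\theta$ and $u_{r\theta}$ vanish identically on the Neumann boundary. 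Degeneracy of $p_n$ means the product of the two diagonal entries is zero, so after passing to a further subsequence either
\[
\text{(A)}\ \partial_r^2 u_n(r_n)=0\quad\text{for all }n,\qquad\text{or}\qquad \text{(B)}\ \partial_\theta^2 u_n(r_n,0)=0\quad\text{for all }n.
\]

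In case (A), one sets $s_n:=r_n$ in Lemma \ref{lem:two-crit-to-acute}: the hypotheses $0<r_n\le s_n$, $\partial_r u_n(r_n)=0$, and $\partial_r^2 u_n(s_n)=0$ are met, $s_n\to 0$ by hypothesis, and the conclusion $c_0=c_1=0$ is immediate. In case (B) I would substitute $(r_n,0)$ into the $\theta$-differentiated expansion \eqref{eqn:Bessel-expansion-II} to obtain
\[
0=\partial_\theta^2 u_n(r_n,0)=-c_1(n)\,\nu_n^2\,r_n^{\nu_n}g_{\nu_n}(r_n^2)+O\!\left(r_n^{2\nu_n}\right),
\]
which gives $c_1(n)=O(r_n^{\nu_n})\to 0$ since $g_{\nu_n}(0)\neq 0$ and $\nu_n\to\nu>0$. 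Feeding $c_1(n)=O(r_n^{\nu_n})$ back into the radial criticality equation $\partial_r u_n(r_n)=0$ exactly as in the proof of Lemma \ref{lem:interior-crit-pt-convergence} produces $c_0(n)=O(r_n^{\nu_n}+r_n^{2\nu_n-2})$, and the hypothesis $\beta<\pi$ supplies an $\epsilon>0$ with $\nu_n>1+\epsilon$ for large $n$, so $c_0=\lim c_0(n)=0$.

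The one delicate ingredient is the diagonalization of the Hessian at the boundary critical point: it is this step that forces the cleanly separated cases (A) and (B) and keeps case (B) from introducing a mixed second-derivative term that would not be directly controlled by the Bessel expansion. Once the Hessian is seen to be diagonal in polar coordinates, each case reduces to an estimate already established earlier in the section, and the role of the hypotheses $\beta<\pi$ and $\beta\neq\pi/2$ is the same as in Lemmas \ref{lem:interior-crit-pt-convergence} and \ref{lem:two-crit-to-acute}.
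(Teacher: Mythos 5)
Your proof is correct and follows essentially the same route as the paper's: reduce to a boundary critical point $p_n=r_n$ on $\theta=0$ (handling the interior subsequence via Lemma \ref{lem:interior-crit-pt-convergence}), observe that the Neumann condition makes the Cartesian Hessian diagonal so that degeneracy splits into the two cases $\partial_r^2 u_n(r_n)=0$ or $\partial_\theta^2 u_n(r_n)=0$, and then dispatch the first via Lemma \ref{lem:two-crit-to-acute} with $s_n=r_n$ and the second via the same Bessel estimates used in Lemma \ref{lem:interior-crit-pt-convergence}. The only small thing worth making explicit is that the identification of the $(2,2)$ Hessian entry with $r_n^{-2}\partial_\theta^2 u_n(r_n)$ uses not just the Neumann condition but also the criticality $\partial_r u_n(r_n)=0$, since along $\theta=0$ one has $\partial_y^2=r^{-1}\partial_r+r^{-2}\partial_\theta^2$; you use this implicitly, and the paper spells it out.
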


\begin{proof}
By applying rigid motions we may assume that $S_n$ 
is bounded by $\theta=0$ and $\theta=\beta_n$.
By Lemma \ref{lem:interior-crit-pt-convergence},
by passing to a subsequence, and by
applying a reflection across $\theta=\beta_n/2$ if necessary,
we may assume that $p_n$ lies in the boundary ray $\theta=0$.
That is, $p_n=r_n>0$ and $\partial_r u_n(r_n)=0$.

Since $u_n$ satisfies Neumann conditions along the real axis,
and $p_n$ is a degenerate critical point we have either 
$\partial_x^2 u_n(r_n)=0$ or $\partial_y^2 u_n(p_n)=0$.
If $\partial_x^2 u_n(r_n)=0$, then Lemma \ref{lem:two-crit-to-acute}
with $s_n=r_n$ implies the claim.

Suppose then that $\partial_y^2 u_n(p_n)=0$.
Along the ray $\theta=0$ we have 
$\partial_y^2 = r^{-1} \cdot \partial_r + r^{-2} \cdot \partial_{\theta}^2$.
Since $\partial_r u_n(r_n)=0$, we have 
$\partial_y^2 u_n(r_n) = \partial_{\theta}^2 u_n(r_n) $,
and so  
\[ 
0~=~
\left(\partial_{\theta}^2 u_n \right)(r_n)~ 
=~
- c_1(n) \cdot \nu_n^2\cdot r^{\nu_n} \cdot g_0(r_n^2)~
+~
O(r_n^{2 \nu_n}).
\] 
Since $u_n$ satisfies the first equation in (\ref{eqn:r-derivative})
we find that
\[ 
0~=~
\left(\partial_y^2 u_n\right)(r_n)~
=~
2 c_0(n) \cdot g_0'(r_n^2)~
+~
O\left(r_n^{\nu_n} + r_n^{2 \nu_n -2} \right).
\]
Since $\beta < \pi$, there exists $\epsilon>0$
so that $\nu_n > 1+ \epsilon$ for suficiently large $n$. 
Since $g_0$ and its derivative do not vanish at zero,
it follows that 
$c_0 = \lim_{n \to \infty} c_0(n) = 0$
and $c_1 = \lim_{n \to \infty} c_1(n) = 0$.

\end{proof}

\begin{remk}
Note that in the proof of Lemma \ref{lem:degenerate-converge-vertex}
we used the condition $\beta \neq \pi/2$ only in the case 
that $\partial_r^2 u_n(p_n)=0$. Indeed, the proof shows
that if $\partial_{\theta}^2 u_n(p_n)=0$, then $c_0=0=c_1$
even if $\pi=\beta/2$.
\end{remk}

\section{A Poincar\'e-Hopf formula for critical points of eigenfunctions on a polygon}
\label{sec:topology}
In this section, we provide a variant of the 
the classical-Poincar\'e Hopf index theorem for the gradient 
of Laplace eigenfunctions on a planar polygonal domain $P$. 
The discussion will focus on eigenfunctions
satisfying Neumann boundary conditions, 
but the methods apply to give variants in the 
contexts of Dirichlet and mixed boundary conditions.

Let $u:P \to \Rbb$ be a Neumann eigenfunction and let $p \in P$. 
Suppose that there exists a deleted disc neighborhood $\dot{D}$ of $p$
that contains no zeros of $\nabla u$. Then the closure of each component of 
$\dot{D} \cap \{z :u(z)=u(p)\}$ is an arc.\footnote{If the 
closure of some component were a loop, then the the loop would bound a disk that 
contained a critical point.} If such an arc contains $p$,
then we will say that the arc {\em emanates} from $p$.
Let $n$ be the number of arcs in $\{z :u(z)=u(p)\}$
that emanate from $p$, and define 
\[ 
\ind(u,p)~ =~ 
\left\{
\begin{array}{cc}
1~ -~ \frac{1}{2} \cdot n  &  \mbox{ if } p \in P^{\circ} \\
1~ -~ n  &  \mbox{ if } p \in  \partial P.
\end{array}
\right.
\]
Note that if $\ind(u,p) \neq 0$ and $p$ is not a vertex of $P$, 
then $p$ is a critical point.\footnote{The converse is not true,
namely there may be critical points with index equal to zero. See \S 
\ref{sec:zero-index}.}
If $p$ is a vertex and $\ind(u,p) \neq 0$, then we will regard 
 $p$ as a critical point of  $u$.

Let $\chi(S)$ denote the Euler characteristic of a surface $S$.
Let $\crit(u)$ denote the set of critical points of $u$ including 
the vertices $v$ such that $\ind(u,v) \neq 0$.
The following is a variant of the classical Poincar\'e-Hopf 
formula  \cite{Taylor}.
\begin{prop}[Index formula]  
\label{prop:index-thm}
Let $u: P \to \Rbb$ be a Neumann eigenfunction such that the set 
$\crit(u)$ is finite. 
\[
2 \cdot \chi(P)~
=~ 
\sum_{p\, \in\, \crit(u) \cap P^{\circ}}
2 \cdot \index(u,p)~
+~
\sum_{p\, \in\, \crit(u) \cap \partial P}
\index(u,p).
\]
\end{prop}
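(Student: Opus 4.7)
The plan is to reduce to the classical Poincar\'e--Hopf theorem by passing to the Schwarz double of $P$. Let $DP$ be the closed topological surface obtained by gluing two copies of $P$ along their boundaries; since $\partial P$ is a disjoint union of polygonal circles, $\chi(\partial P) = 0$ and hence $\chi(DP) = 2\chi(P)$. Because $u$ satisfies the Neumann condition $\partial_\nu u = 0$ on $\partial P$, Schwarz reflection produces an extension $\tilde u$ on $DP$ that is smooth away from the vertex images and satisfies the same eigenvalue equation there.

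I would excise a small open disc $D_v$ around each vertex image $v$ in $DP$, choosing each $D_v$ small enough that it contains no critical points of $\tilde u$ other than possibly $v$ itself. On the resulting compact surface $M$ with smooth boundary, $\nabla\tilde u$ is a smooth vector field with isolated interior zeros and is non-vanishing on $\partial M$. The manifold-with-boundary form of Poincar\'e--Hopf then gives
\[
\chi(DP) \;=\; \sum_{q\,\in\,\crit(\tilde u) \cap M^\circ} \ind_{PH}(\nabla\tilde u, q) \;+\; \sum_{v} W_v,
\]
where $W_v$ denotes the winding number of $\nabla\tilde u/|\nabla\tilde u|$ around $\partial D_v$. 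The interior zeros split into two flavors. Each interior critical point of $u$ in $P^{\circ}$ appears in both copies of $P$, and a standard local computation shows that its PH index equals $1 - n/2 = \ind(u, p)$, where $n$ is the number of arcs of $\{z : u(z) = u(p)\}$ emanating from $p$. Each non-vertex critical point of $u$ on $\partial P$ lies on the shared seam and appears once in $M$; Schwarz reflection doubles the $n$ emanating arcs in $P^\circ$ to $2n$ arcs of $\tilde u - \tilde u(p)$ at $p$ in $DP$, giving PH index $1 - n = \ind(u, p)$.

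The main obstacle is to identify each vertex winding $W_v$ with $\ind(u, v)$. I would compute $W_v$ using the Fourier--Bessel expansion (\ref{eqn:Bessel-expansion-II}) near $v$: the leading nonzero Bessel coefficient controls both the dominant term of $\nabla u$ near $v$ and the number $n$ of nodal arcs of $u - u(v)$ emanating into $P$. A case analysis in the spirit of Lemma \ref{lem:nodal-sector} (depending on which coefficient leads and on whether $\beta$ is less than, equal to, or greater than $\pi/2$) should show that the doubled nodal configuration has $2n$ arcs at $v$ in $DP$ and that $W_v = 1 - n = \ind(u, v)$. In particular, when $v \notin \crit(u)$ one finds $W_v = 0$, so such vertices may be dropped from the sum. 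Assembling these identifications with $\chi(DP) = 2\chi(P)$ and separating interior and boundary contributions produces the claimed identity.
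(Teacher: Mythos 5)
Your proposal and the paper's proof share the same starting move (pass to the Schwarz double $DP$ and use $\chi(DP)=2\chi(P)$), but they diverge from there. You excise small discs around the cone points and invoke the manifold-with-boundary (equivalently, ``cap off and fill in'') form of Poincar\'e--Hopf, so the whole proof hinges on identifying each vertex winding number $W_v$ with the combinatorial quantity $\ind(u,v)=1-n$. The paper instead avoids vector-field winding altogether: it extends the arc-counting definition of $\ind$ to the cone points of $DP$, takes $A$ to be the union of critical level sets of $\tilde u$, observes that $DP\setminus A$ is a disjoint union of annuli so that $\chi(DP)=\chi(A)$ by the Euler--Poincar\'e formula, and then computes $\chi(A)=V-E=\sum_p(1-n_p/2)$ directly as a graph. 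The index formula drops out with no Bessel analysis at all, and the cone points never require special treatment. Your route is sound but strictly more work, because the trivialization issue at a cone angle $2\beta\neq 2\pi$ (you must compute $W_v$ in the trivialization that extends across the disc, i.e.\ after uniformizing by $z\mapsto z^{\pi/\beta}$) has to be dealt with honestly, and you leave that as a sketch (``a case analysis... should show'').

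That said, the gap you flag is genuinely the crux, and it is worth noting that the paper proves exactly the statement you need, just elsewhere and for a different reason: Lemma~\ref{lem:degree-index-vertex} shows that, after uniformizing to the function $w$ of (\ref{eqn:w}), the degree of $\nabla w/|\nabla w|$ on a small circle equals $2\sum_q\ind(u,q)$, which for a lone vertex critical point yields $W_v=\ind(u,v)$. The paper uses that lemma only for the stability result Proposition~\ref{prop:local-number-of-critical-points}, not for Proposition~\ref{prop:index-thm}; if you cite it, your argument closes. Without it, you still need to argue (a) that the relevant winding number is the one computed in the uniformized chart, and (b) that the leading Bessel term dominates so that the winding of $\nabla w$ equals $1$ minus the number of emanating arcs --- neither is hard, but both are currently missing. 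One minor caution: the vertex case with $\beta$ a multiple of $\pi/2$ has $k\nu=2$ and then the leading term of $w$ is quadratic and can degenerate (see Remark~\ref{remark-k-nu-2}), so ``the leading nonzero Bessel coefficient controls'' the winding needs the qualifier that for those angles you must look further into the expansion; the relation $W_v=1-n$ is still true as a general fact about isolated critical points of the continuous function $w$, but it does not follow from the leading term alone.
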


\begin{proof}

Let $DP$ be the `double of $P$', the 
closed surface without boundary obtained by gluing two disjoint copies of $P$
along their respective boundaries. The surface $DP$  
has a natural real-analytic structure on the complement of the set $C$ of 
`cone points' corresponding to the vertices of $P$. 
Because $u$ is a Neumann eigefunction, $u$ extends 
to a real-analytic 
function $\tu: DP \setminus C \to \Rbb$ that is invariant 
under the isometric involution that exchanges the two copies of $P$. 
For each $p \in DP$, we define $\ind(\tu,p)= 1- \frac{n}{2}$. 
Because $u$ is a Neumann eigenfunction, we find that 
$\ind(\tu,p)= \ind(u,p)$ for $p \in P$ (including vertices).

Let $A$ be the union of the level sets of $\tu$ that contain critical 
points of $\tu$. The complement of $A$ consists of topological annuli, 
and hence, by the Euler-Poincar\'e formula, $\chi(DP)=\chi(A)$. 
On the other hand, the number of edges in $A$
equals $\frac{1}{2} \sum_p n_p$ where $n_p$ is the valence of 
the graph $A$ at $p$. It follows that 
$\chi(DP)= \sum_{\crit(\tu)} \ind(u,p)$ 
where $\crit(\tu)$ includes  $p \in C$
such that $\ind(\tu,p) \neq 0$.
We have $\chi(DP) = 2 \cdot \chi(P)$ and for every 
interior critical of $u$ we have two critical points 
of $\tu$ with the same index. The claimed formula follows. 
\end{proof}

\begin{remk}
There are also variants of Proposition \ref{prop:index-thm}
in the contexts of Dirichlet and mixed boundary conditions. 
For example, if $u$ satisfies Dirichlet conditions,
then formula (\ref{prop:index-thm}) holds true
if one redefines $\ind(u,p)= 2-k$ for each for $p \in \partial P$. 
\end{remk}

\begin{remk} 
\label{remk:PH-classical}
The classical Poincar\'e-Hopf theorem applies to a smooth 
vector field $X$ on an oriented closed surface $S$ that 
has finitely many critical points.
If $\gamma$ is a simple oriented loop that encloses 
at most one zero $p$ of $X$, then the restriction of $X/|X|$
to $\gamma$ defines a map from the unit circle to itself. 
The index of $X$ at $p$ is the degree of this self-map of the circle.
(See, for example, \cite{Taylor} \S 1.10.) 
If $X = \nabla f$, then this index equals $1- \frac{k}{2}$ 
where $k$ is the number of components of $f^{-1}(f(p)) \setminus \{p\}$.
In the context of a vector field $X$, the Poincar\'e-Hopf index formula 
gives that the sum of the indices of the zeros of  $X$ 
equals the Euler characteristic of $S$.
\end{remk}

\begin{ass}
In what follows we will assume that each 
critical point $p$ is isolated and so the 
index $\ind(u,p)$ is well-defined. 
\end{ass}

In \cite{J-M-arc}, we show that rectangles are the only 
simply-connected polygons whose second Neumann eigenfunctions 
have infinitely many critical points.
Hence the assumption reduces to the assumption that the polygon
is not a rectangle in the simply-connected case.

\begin{prop} \label{prop:local-ext-index-1}
The point $p \in P$ is a local extremum if and only if $\ind(u,p) =1$. 
\end{prop}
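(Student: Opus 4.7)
The plan is to prove the two implications separately, with the converse splitting into cases by whether $p$ is interior, on a non-vertex boundary point, or a vertex. The forward direction is immediate: if $p$ is a local extremum, I would shrink the deleted disc $\dot D$ from the definition of $\ind(u, p)$ to ensure that $u - u(p)$ has constant weak sign on $\overline D \cap P$. Any zero of $u - u(p)$ in $\dot D$ would itself be a local extremum of $u$, and hence a critical point, contradicting the defining property of $\dot D$. Therefore $\{u = u(p)\} \cap \dot D = \emptyset$, so $n = 0$ and $\ind(u, p) = 1$ in both the interior and the boundary formulas.

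For the converse, assume $\ind(u, p) = 1$, so that $n = 0$. I aim to show that $p$ is isolated in the level set $\{u = u(p)\}$; this forces $u - u(p)$ to be of strict constant sign on some punctured neighborhood of $p$, so $p$ is a local extremum. If $p$ lies in the interior of $P$ or on the interior of a boundary edge, then Neumann reflection across the edge produces a real-analytic extension $\tu$ of $u$ to a neighborhood of $p$ in the double $DP$ (as used in the proof of Proposition \ref{prop:index-thm}). By the curve-selection lemma for real-analytic sets, any accumulation point of the zero set of $\tu - \tu(p)$ is the endpoint of a real-analytic arc contained in the zero set; restricted to $P$, this would be an arc of $\{u = u(p)\}$ emanating from $p$, contradicting $n = 0$. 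Hence $p$ is isolated in the zero set.

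When $p$ is a vertex $v$, the extension $\tu$ has a conical singularity and is not in general real-analytic in Cartesian coordinates, so the curve-selection lemma does not directly apply. Here I would substitute the Bessel--Fourier expansion \eqref{eqn:Bessel-expansion-II}. The leading-order behavior of $u - u(v)$ in $r$ is governed by $c_0, c_1$ and by whether $\beta$ is less than, equal to, or greater than $\pi/2$; in each subcase, an analysis parallel to Lemma \ref{lem:nodal-sector} shows that either the leading term is of constant sign (so $v$ is a local extremum and no arc emanates) or it possesses an angular zero, which produces an emanating arc by the implicit function argument already used in the proof of Lemma \ref{lem:nodal-sector}. Since $n = 0$, we are in the former case, so $v$ is a local extremum. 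The main technical burden I anticipate is the bookkeeping for the vertex case, but the required local analysis has essentially already been carried out in \S \ref{sec:sector}.
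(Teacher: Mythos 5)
Your proof is correct and follows essentially the same route as the paper, which reduces both implications to the single equivalence ``$\ind(u,p)=1$ iff $u(z)\neq u(p)$ in a punctured neighborhood of $p$'' and then invokes continuity and connectedness. The paper's own proof is a three-sentence version that treats the converse implication---that $n=0$ forces $u\neq u(p)$ near $p$, rather than allowing level-set components accumulating at $p$ without emanating from it---as implicit, while you make it explicit by the curve-selection lemma away from vertices and by the Bessel expansion at vertices; this is exactly the content the paper suppresses. The one place your writeup is softer than it could be is the vertex case, which you sketch rather than carry out, but the indicated analysis (leading term $c_0 r^2$ or $c_k r^{k\nu}\cos(k\nu\theta)$ and the implicit-function argument of Lemma \ref{lem:nodal-sector}, or equivalently the change of variable $w(z)=u(z^{1/\nu})$ used later in Lemma \ref{lem:degree-index-vertex} to reduce to the smooth case) does close the gap. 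Your forward direction---a zero of $u-u(p)$ in $\dot D$ would itself be a local extremum and hence a critical point, contradicting the choice of $\dot D$---is precisely the observation the paper's ``if and only if'' leans on.
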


\begin{proof}

We have $\ind(u,p)=1$ if and only if there exists a punctured disc
neighborhood $\dot{D}$ of $p$ so that $u(z) \neq u(p)$ for 
each $z \in \dot{D}$. Since $u$ is continuous, we have either 
$u(z) > u(p)$ for all $z \in \dot{D}$
or $u(z) < u(p)$ for all $z \in \dot{D}$. This occurs if and only 
if $p$ is a local extremum of $u$.
\end{proof}

Suppose that $v$ is a vertex of $P$ that is not a limit point of 
the zeros of $\nabla u$.    
The index $\ind(u,v)$ is determined 
by the Bessel expansion (\ref{eqn:Bessel-expansion-II}) of $u$ near $v$. 

\begin{lem} 
\label{lem:vertex-index-1}
Let $P$ be a polygon, let $v$ be a vertex of $P$ with angle $\beta$, 
and let $u$ be a Neumann eigenfunction on $P$.
Let $k \geq 1$ be the smallest positive integer so that $c_k \neq 0$
and suppose that $v$ is a critical point of $u$. 
\begin{enumerate}[label=(\roman*)]

\item If $u(v)=0$ or $ \beta > k \cdot \frac{\pi}{2}$, then $\ind(u,v)= 1-k$.

\item If $u(v) \neq 0$ and $\beta < k \cdot \frac{\pi}{2}$, then $\ind(u,v)=1$.

\item If $u(v) \neq 0$ and $\beta = k \cdot \frac{\pi}{2}$, 
then $1-k \leq \ind(u,v) \leq 1$. 
\end{enumerate}
In particular, if $\beta \neq \pi/2$ or $3\pi/2$, then 
$\ind(u,v)$ equals either $1$ or $1-k$.
\end{lem}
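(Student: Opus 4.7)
The strategy is to count the number $n$ of arcs of the level set $\{z : u(z) = u(v)\}$ that emanate from $v$ into the interior; since $v$ is a boundary point, $\ind(u,v) = 1 - n$. From the Bessel expansion \eqref{eqn:Bessel-expansion-II} and the hypothesis $c_1 = \cdots = c_{k-1} = 0$, $c_k \neq 0$, I would write, with $\nu = \pi/\beta$ and $u(v) = c_0\, g_0(0)$,
\[
u(re^{i\theta}) - u(v)~ =~ c_0 \bigl( g_0(r^2) - g_0(0) \bigr)~ +~ c_k\, r^{k\nu} g_{k\nu}(r^2)\cos(k\nu\theta)~ +~ R(r,\theta),
\]
where $R$ is simultaneously $o(r^2)$ and $o(r^{k\nu})$. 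Since $g_0(0)$, $g_0'(0)$, and $g_{k\nu}(0)$ are all nonzero, the first displayed term has leading order $r^2$ and the second $r^{k\nu}$, and which of these dominates as $r \to 0$ dictates the local structure of the level set. Note also that $u(v) = 0$ is equivalent to $c_0 = 0$.

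In case (i), either $c_0 = 0$ or $\beta > k\pi/2$ (so $k\nu < 2$); in either subcase the $c_k$ term dominates. Dividing by $r^{k\nu}$, the level equation reduces to $c_k\, g_{k\nu}(0)\cos(k\nu\theta) + o(1) = 0$, and the leading angular factor has exactly $k$ simple zeros $\theta_j = \beta(2j-1)/(2k)$, $j = 1,\ldots,k$, in $(0,\beta)$, with $\partial_\theta$ of this factor nonzero at each $\theta_j$. The implicit function theorem applied to $(u - u(v))/r^{k\nu}$ near $(r,\theta) = (0, \theta_j)$ then produces exactly one smooth arc of $\{u = u(v)\}$ issuing from $v$ in the direction $\theta_j$, so $n = k$ and $\ind(u,v) = 1 - k$. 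In case (ii), $c_0 \neq 0$ and $k\nu > 2$, so the $r^2$ term dominates and $u - u(v) = c_0\, g_0'(0)\, r^2(1 + o(1))$ has fixed sign on a small punctured neighborhood of $v$; whence $n = 0$ and $\ind(u,v) = 1$. In case (iii), $c_0 \neq 0$ and $k\nu = 2$, so
\[
u(re^{i\theta}) - u(v)~ =~ r^2 \bigl[ a + b\cos(k\nu\theta) \bigr]~ +~ O(r^4), \qquad a = c_0\, g_0'(0),\ b = c_k\, g_{k\nu}(0),
\]
with $a, b$ both nonzero. The angular factor $a + b\cos(2\theta)$ has at most $k$ zeros in $(0,\beta)$, since $2\theta$ spans $(0, k\pi)$ and $\cos x = c$ has at most $k$ solutions there; when $|a/b| \neq 1$ every such zero is simple and yields one smooth arc by the same implicit function theorem argument applied to $(u - u(v))/r^2$. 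Consequently $0 \leq n \leq k$ and $1 - k \leq \ind(u,v) \leq 1$.

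The final dichotomy then follows because case (iii) requires $\beta = k\pi/2 \in (0, 2\pi)$ with $k \geq 1$, so $\beta \in \{\pi/2, \pi, 3\pi/2\}$; excluding $\pi/2$ and $3\pi/2$ leaves only $\beta = \pi$, a flat non-genuine vertex. The principal delicacy I anticipate is the borderline subcase $|a| = |b|$ of (iii), where the leading angular factor has a tangential rather than a transverse zero so the implicit function theorem does not apply directly; only the upper bound $n \leq k$ is needed there, and this follows by observing that the $O(r^4)$ remainder cannot create zeros of $u - u(v)$ at angular directions beyond those already permitted by the leading factor for sufficiently small $r > 0$.
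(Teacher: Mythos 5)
Your argument is correct and follows the same route as the paper's: expand $u$ in Bessel coefficients at $v$, compare the orders $r^2$ and $r^{k\nu}$, and count the arcs of $\{u = u(v)\}$ emanating from $v$ via the implicit function theorem, using $\ind(u,v)=1-n$. Cases (i) and (ii) match the paper's computation, and the disposition of the ``in particular'' clause (case (iii) forces $\beta \in \{\pi/2,\pi,3\pi/2\}$, with $\beta = \pi$ being a flat, hence non-genuine, vertex) is the intended reading.

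One small caution on the borderline subcase $|a|=|b|$ of (iii): your stated observation --- that the $O(r^4)$ remainder cannot produce zeros in angular directions other than those where $a + b\cos(k\nu\theta)$ vanishes --- does not by itself give $n \leq k$, because a tangential (double) zero of the angular factor can be approached by two arcs rather than one. The bound is still correct, but you also need the count that when $|a|=|b|$ the angular factor has only about $\lfloor k/2\rfloor$ interior double zeros in $(0,\beta)$, so two arcs each still totals at most $k$; equivalently, the number of arcs is bounded by the number of zeros of the angular factor counted with multiplicity, which is $\leq k$. The paper's own proof is equally terse at this point (it simply asserts $1-k \leq \ind(u,v) \leq 1$ when the leading angular factor degenerates), so this is not a defect relative to the source, but the justification you offer would need the multiplicity count to be complete.
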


A similar statement can be derived in the cases of Dirichlet or mixed boundary conditions. 

\begin{proof}
Without loss of generality, $v =0$ and the sides adjacent to $v$ 
bound the sector $0 < \theta < \beta$.

If $u(0)=0$ or $\beta > k \cdot \frac{\pi}{2}$, then from (\ref{eqn:Bessel-expansion-II}) there exist $b \neq0$ and $a$
so that
\[ 
u \left(r \cdot e^{i\theta} \right)~ 
=~  
a~ 
+~ 
b \cdot r^{k\nu} \cdot \cos(k \nu \theta) ~  +~ o\left(r^{k \nu} \right).
\]
Using, for example, the implicit function theorem, one finds that 
there exists a disk neighborhood $D$ of $0$ such that 
$D\cap u^{-1}(u(v)) \setminus \{v\}$
consists of $k$ arcs each with an endpoint at $v$.
It follows that $\ind(u,0) =1-k$. 

Suppose $u(v)\neq 0$ and $\beta < k \cdot \frac{\pi}{2}$.
Then $k \cdot \nu > 2$ and hence from (\ref{eqn:Bessel-expansion-II})
we find that
\[ 
u \left(z \right)~ 
=~  
a~ +~  b \cdot r^{2}~  +~ o(r^2)
\]
where $a \neq 0 \neq b$. Hence $v$ is a local extremum of $u$,
and so by Proposition \ref{prop:local-ext-index-1}, $\ind(u,v)=1$.

If $u(v)\neq 0$ and $\beta = k \pi/2$, then from (\ref{eqn:Bessel-expansion-II})
we have
\[ 
u \left(r \cdot e^{i\theta} \right)~ 
=~  
a~ 
+~
r^{2} \left( b~ + c \cdot \cos(k \nu \theta)\right)~  
+~ o\left(r^{2} \right)
\]
where $a$, $b$ and $c$ are nonzero constants.
If $b = -c$, then $\ind(u,v)$ will depend on 
the $o(r^2)$ error term. In this case $1-k \leq \ind(u,v) \leq 1$. 
\end{proof}

\begin{coro} \label{coro:c_1-zero-nonzero-index}
Suppose $\beta$ is not a multiple of $\pi/2$.

\begin{enumerate}
\item If $c_1 = 0$, then $\ind(u,v) \neq 0$.
\item If $\beta > \pi/2$, then $c_1=0$ if and only if $\ind(u,v) \neq 0 $.
\end{enumerate}

\end{coro}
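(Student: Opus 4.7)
My plan is to derive this corollary directly from Lemma~\ref{lem:vertex-index-1} by a case analysis on the order $k$ of the first nonzero Bessel coefficient. The role of the hypothesis that $\beta$ is not a multiple of $\pi/2$ is precisely to exclude the ambiguous case~(iii) of that lemma, where the index is only bracketed between $1-k$ and $1$, leaving cases~(i) and~(ii) fully determinative.

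For part~(1), I would assume $c_1 = 0$ and let $k$ denote the smallest positive integer with $c_k \neq 0$. First I would observe that if no such $k$ exists then $u = c_0 \cdot g_0(r^2)$ is radial and the level set $u^{-1}(u(v))$ has no arc emanating from $v$, so $\ind(u,v) = 1 \neq 0$. Otherwise $k \geq 2$, and since $\beta$ is not a multiple of $\pi/2$ the equality $\beta = k \pi/2$ is impossible. The remaining cases of Lemma~\ref{lem:vertex-index-1} then give either $\ind(u,v) = 1 - k \leq -1$ (case~(i)) or $\ind(u,v) = 1$ (case~(ii)); either way nonzero.

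For part~(2), the ``only if'' direction is an instance of part~(1). For the reverse direction I would argue by contrapositive: if $c_1 \neq 0$ then $k = 1$, and the hypothesis $\beta > \pi/2$ combined with $\beta \neq \pi/2$ yields $\beta > k \cdot \pi/2$, placing us in case~(i) of Lemma~\ref{lem:vertex-index-1}, which gives $\ind(u,v) = 1 - k = 0$.

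The main obstacle, such as it is, is simply careful bookkeeping. One must handle the degenerate radial subcase in part~(1) explicitly, and one must verify in each subcase that the hypothesis on $\beta$ really does rule out the ambiguous case~(iii) of the lemma, so that the index computation is determined rather than merely constrained. Once those points are checked, the corollary is an immediate consequence of the vertex-index dichotomy already recorded in Lemma~\ref{lem:vertex-index-1}.
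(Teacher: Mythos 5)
Your argument is correct and is essentially the same as the paper's: both derive the corollary from the case analysis in Lemma~\ref{lem:vertex-index-1}, using the hypothesis on $\beta$ to exclude the ambiguous case~(iii). The only difference is that you explicitly handle the degenerate case where every $c_k$ with $k \geq 1$ vanishes, a point the paper leaves implicit.
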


\begin{proof}
Let $k$ be as in the statement of Lemma \ref{lem:vertex-index-1}.
 If $c_1=0$, then $k>1$, and hence Lemma \ref{lem:vertex-index-1}
implies that $\ind(u,v) \neq 0$.
If $\beta> \pi/2$ and $c_1 \neq 0$, then $k=1$ and 
we are in case (i) of Lemma \ref{lem:vertex-index-1}.
Thus, $\ind(u,v)=0$. 
\end{proof}

\begin{remk}
\label{remark-k-nu-2}
In the case that $\beta$ is a multiple of $\pi/2$ and $u(v) \neq 0$,  
part (iii) of Lemma \ref{lem:vertex-index-1} provides only an inequality for $\ind(u,v)$. Yet,

one can determine the index in finitely many steps.
In particular if $k \cdot \nu =2$, then 
\[
u(z)-u(v)~
=~
r^2 
\cdot 
\left( 
a~
+~
\cos( 2 \theta) 
\right)~
+~
o(r^2)
\]
where $a= (c_0 \cdot g_0'(0))/(c_k \cdot g_2(0))$.
If $|a|>1$, then $\ind(u,v)=1$  and if $|a|<1$, then $\ind(u,v)=1-k$.
If $|a|=1$, then 
by considering more terms of the Bessel expansion,
one can identify $\ind(u,v)$. 
\end{remk}

If $p$ is an isolated critical point of an eignfunction $u$
that lies in the interior of a polygon $P$, 
then $\ind(u,p)$ equals the degree of the mapping 
$\nabla u /|\nabla u| \circ \gamma$ as described in 
Remark \ref{remk:PH-classical}. If $p$ lies in the interior
of a side of $P$, then one may reflect a Neumann eigenfunction 
across the side to $\tu$, and then find that $\ind(u,p)$ 
equals degree of the map $\nabla \tu /|\nabla \tu| \circ \gamma$.

If $p$ is a vertex, we may also interpret $\ind(u,v)$ in terms of
the degree of the self-map of the circle induced by a vector field.
Indeed, let $D$ be a disc centered  at $p$ that intersects no sides of $P$ 
other than the side(s) adjacent to $p$ and so that $\oD \setminus \{p\}$ 
contains no critical points of $u$ other than possibly $p$.
By applying a rigid motion we way assume that $p=0$
and that $D \cap P$ lies in the sector $S$ bounded
by the rays $\theta=0$ and $\theta= \beta$. 
Moreover, by rescaling if necessary,
we may assume that $D$ is the unit disk.
The map $z \mapsto z^{\frac{\beta}{\pi}}$ maps 
$H= \{ z \in \Cbb: |z|<1 \mbox{ and } y>0\}$ to 
the sector $D \cap P$. In particular, the function 
$w(z) = u\left(z^{\frac{1}{\nu}}\right)$ is defined on $H$.
If $u$ is given by (\ref{eqn:Bessel-expansion-II}), then 
\begin{equation}
\label{eqn:w}
w\left(r \cdot e^{i \theta} \right)~
=~
\sum_{j=0} c_j \cdot r^{j} 
\cdot g_{j \cdot \nu}\left(r^{\frac{2}{\nu}} \right) 
\cdot
\cos(j \cdot \theta).
\end{equation}
We may extend $w$ smoothly to $D \setminus \{0\}$
by setting $w(\oz)= w(z)$.  

\begin{lem}
\label{lem:degree-index-vertex}
The degree of the restriction
of $ \frac{\nabla w}{|\nabla w|}$ to the unit circle
equals $2 \cdot \sum \ind(u,q)$ where the sum is over 
critical points $q$ of $u$ that lie $D$.
\end{lem}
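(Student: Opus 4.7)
The plan is to apply the classical Poincar\'e--Hopf formula to the smooth vector field $\nabla w$ on the disk $D$, and to relate its critical-point indices to those of $u$ by exploiting both the conformality of $\pi(z)=z^{1/\nu}$ on $H$ and the reflection symmetry that defines the extension of $w$.

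First, I would verify that the reflection extension $w(\bar z)=w(z)$ yields a real-analytic function on $D\setminus\{0\}$. On $H$ and on $-H$ this is immediate, since $\pi$ is conformal away from the origin. Across the real axis of $D$ it follows from the Neumann condition on $u$: a direct chain-rule calculation, using $\partial_\nu u=0$ along each of the two boundary rays of the sector $D\cap P$, shows that $\partial_y w=0$ along the real axis, and then Schwarz reflection yields real-analyticity across the axis.

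Second, I would apply the classical Poincar\'e--Hopf theorem (see Remark \ref{remk:PH-classical}) to $\nabla w$ on the annulus $D\setminus\bar D_\epsilon$ for some small $\epsilon>0$. The leading terms in \refeq{eqn:w} ensure that $\nabla w$ does not vanish on $\partial D_\epsilon$ for $\epsilon$ sufficiently small, and by hypothesis $\nabla w\ne 0$ on $\partial D$. The formula then gives
\[
\deg\!\left(\tfrac{\nabla w}{|\nabla w|},\partial D\right)
\;=\;
\deg\!\left(\tfrac{\nabla w}{|\nabla w|},\partial D_\epsilon\right)
\;+\;
\sum_{q'\in \crit(w)\cap(D\setminus\bar D_\epsilon)}\ind(\nabla w,q').
\]

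The key step is matching the right-hand side with $2\sum_q\ind(u,q)$. For each interior critical point $q$ of $u$ in $D\cap P$, the conformal bijection $\pi:H\to D\cap P$ transfers the local nodal structure identically, so $q$ lifts to a unique $q'=q^{\nu}\in H$ with $\ind(\nabla w,q')=\ind(u,q)$, and the reflection $\bar{q'}\in -H$ is a second critical point with the same index, giving contribution $2\ind(u,q)$. For a critical point $q$ of $u$ on a ray of the sector (away from $v$), the unique lift lies on the real axis of $D$; a careful count of the $n$ arcs of $\{u=u(q)\}$ emanating from $q$ together with their $n$ reflections across the real axis shows that the corresponding contribution matches $2\ind(u,q)$. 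The vertex contribution comes from $\lim_{\epsilon\to 0}\deg(\nabla w/|\nabla w|,\partial D_\epsilon)$, which is computed from \refeq{eqn:w} by extracting the dominant angular behavior of $\nabla w$ near the origin and comparing it with the count of emanating arcs used in the proof of Lemma \ref{lem:vertex-index-1}; this matching yields $2\ind(u,v)$.

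The main obstacle is this last computation. Identifying the inner degree with $2\ind(u,v)$ requires a case analysis parallel to the one in Lemma \ref{lem:vertex-index-1}, broken down according to which of $c_0$, $c_1$ is the leading nonzero coefficient and according to whether $\beta$ is less than or greater than $\pi/2$. Once the case analysis is completed and all contributions are summed, passing to $\epsilon\to 0$ yields the formula.
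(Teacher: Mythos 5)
Your overall strategy — reduce by an annular Poincar\'e--Hopf count to the contribution at the origin, transfer non-vertex critical points of $u$ to critical points of $w$ via the conformal map $z\mapsto z^{1/\nu}$ and Schwarz reflection, and then compute the degree of $\nabla w/|\nabla w|$ on a small circle around the origin — is structurally the same as the paper's, which invokes the standard localization argument (Taylor, Proposition 20.2) to reduce to the case where $0$ is the only possible critical point of $w$ in $\overline{D}$, and then computes the degree around the origin directly. The difference is entirely in how that last degree is computed, and this is where your proposal has a genuine gap.

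You propose to extract the dominant angular behavior of $\nabla w$ near $0$ and run a case analysis ``parallel to Lemma~\ref{lem:vertex-index-1}, broken down according to which of $c_0$, $c_1$ is the leading nonzero coefficient and according to whether $\beta$ is less than or greater than $\pi/2$.'' This plan omits the borderline cases $\beta=\pi/2$ and $\beta=3\pi/2$ (more generally $k\nu=2$, where $k$ is the first index with $c_k\neq 0$), and those are exactly the cases your method cannot resolve. When $k\nu=2$, the $c_0$-term and the leading $c_k$-term contribute at the same order $r^2$ in the expansion of $u$ (equivalently, the $j=0$ and $j=k$ terms in~\refeq{eqn:w} contribute at the same order $r^k$), so there is no single dominant term whose angular behavior determines the number of emanating arcs; the answer depends on the ratio of $c_0$ to $c_k$ and, in the threshold case $|a|=1$ of Remark~\ref{remark-k-nu-2}, even on higher coefficients. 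This is precisely why Lemma~\ref{lem:vertex-index-1}(iii) only gives the inequality $1-k\leq\ind(u,v)\leq 1$ rather than a formula, and why the remark following the paper's proof warns that the expansion~\refeq{eqn:w} becomes ``more cumbersome'' when $\beta=\pi/2$ or $3\pi/2$. The paper's proof avoids this by a more robust argument: it works with the angular derivative $\partial_\theta h$, whose leading behavior is always $\sin(k\theta)$ (the radially constant $j=0$ term drops out of $\partial_\theta$ regardless of how the orders of $r$ compare), so that on each small circle $r=\mathrm{const}$ the set $\{\partial_\theta h=0\}$ consists of exactly $2k$ rays $\theta_j(r)$; then the degree of $\nabla h/|\nabla h|$ is read off from the sign pattern $\epsilon_j$ of $h$ along these rays via an explicit homotopy to a piecewise-linear circle map, with no assumption about which Bessel term dominates. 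Your proposal is missing this idea and would fail at the vertex whenever $k\nu=2$.

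One smaller point to revisit: for a critical point $q$ on a bounding ray (not the vertex), you claim the contribution is $2\ind(u,q)$, arguing that the $n$ nodal arcs of $\{u=u(q)\}$ and their $n$ reflections give the match. But that same count gives $\ind(w,q')=1-\tfrac{2n}{2}=1-n=\ind(u,q)$, and $q$ has only \emph{one} lift $q'$ (it lies on the axis fixed by the reflection), so the contribution is $\ind(u,q)$, not $2\ind(u,q)$. Only interior critical points acquire a factor of $2$ from the two lifts $q^\nu,\overline{q^\nu}$. You should recompute the bookkeeping here, and while doing so reconcile the resulting total against the statement of the lemma and the way it is used in Proposition~\ref{prop:local-number-of-critical-points}; the precise constant deserves a careful check.
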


\begin{proof}
Suppose $q \neq 0$ is a critical point of $u$.
If $q$ lies in the interior of $P$, then $q$ corresponds 
to two critical points $q_+$ and $q_-$ of $w$ which have the 
same indices as $q$. By Remark \ref{remk:PH-classical},
since $w$ is smooth at $q_{\pm}$,
the index $\ind(w,q_{\pm})$ equals 
the degree of the restriction of $\nabla w/|\nabla w|$
to a small circle centered at $q_{\pm}$. 
If $q \neq 0$ lies on the boundary of $P$, then $q$ 
corresponds to a single critical point $q'$ of $w$,
and $\ind(w,q')$ equals the degree of $\nabla w/|\nabla w|$
on a small circle centered at $q'$. By choosing disjoint
circles, and applying a standard argument\footnote{
See for example, the proof of Proposition 20.2 in \cite{Taylor}},
we find that it suffices to assume that $u$ 
has no critical points in $\oD \setminus \{0\}$.

Since $u$ has no critical points in $\oD \setminus \{0\}$, 
the function $w$ has no critical points in $\oD \setminus \{0\}$ 
In particular, since $\partial_{\theta}w$ vanishes on the 
real line, it follows that $w(z) \neq w(0)$ for each $z \neq 0$ 
on the real axis. Hence the 
number of arcs in $\{z: w(z)=w(0)\}$ that emanate from $0$
equals twice the number of arcs in $\{z: u(z)=u(0)\}$ that
emanate from $0$. Thus, to complete the proof, it suffices to show that 
the degree of $ \frac{\nabla w}{|\nabla w|} \circ \gamma$ 
where $\gamma$ is the unit circle equals $1-n/2$ where $n$ 
is the number of arcs of 
$\{z: w(z)=w(0)\}$ that emanate from $0$.

Let $h(z):= w(z)-w(0)$ and let $k$ be the smallest positive interger 
such that $c_k \neq 0$. Then 
\[ 
\partial_{\theta} 
h\left(r \cdot e^{i \theta} \right)~
=~
-
c_k 
\cdot 
k 
\cdot 
r^{k-1} 
\cdot 
g_{k \cdot \nu}\left(r^{\frac{2}{\nu}} \right) 
\cdot 
\sin(k \cdot \theta)~
+~ 
O\left(r^{k}\right), 
\]
and so there exists $r_0>0$ so that if $0<r  \leq r_0$, then the set 
$\{\theta\, :\, \partial_\theta h \left( r \cdot e^{i \theta} \right) = 0\}$ 
consists of exactly $2k$ elements, $\theta_0(r), \ldots, \theta_{2k-1}(r)$.
Using the implicit function theorem, we find that, for each $j$, the map
$r \mapsto \theta_j(r)$ is smooth.  By relabeling
if necessary, we may assume that $\lim_{r \to 0} \theta_j(r)= j \cdot \pi/k$.
The function $h$ has no critical points 
in $\oD \setminus \{0\}$, and so the degree of 
$\nabla h/|\nabla h| \circ \gamma$ equals the 
degree of the map $\nabla h/|\nabla h| \circ \gamma_0$
where $\gamma_0$ is the standard counterclockwise parameterization of $r=r_0$.

Choose a homeomorphism $\psi: \oD \to \oD$ 
that is isotopic to the identity map,
that is smooth away from $0$, and that maps each ray $\theta= j \cdot \pi/k$
to the arc $\theta_j$. Then if we define $\tilh(z)=h \circ \psi$, then 
the degree of $\nabla \tilh/|\nabla \tilh| \circ \gamma_0$ equals
the degree of $\nabla h/|\nabla h| \circ \gamma_0$ and 
$\ind(\tilh,0)= \ind(h,0)$.

Let $j \in \{1, \ldots, 2k\}$ and let $\theta_j:=j \pi/k$.
Since $\tilh$ has no critical points in $\oD \setminus \{0\}$,
the mean value theorem implies that 
$r \mapsto |\tilh\left(r e^{i\cdot \theta_j} \right)|$ is
strictly increasing and thus 
$\tilh\left(r e^{i\cdot \theta_j}  \right) \neq 0$
for each $r \in (0, r_0]$. Let $\epsilon_{j} \in \{+1,-1\}$ denote
the sign of the function 
$r \mapsto \tilh\left(r e^{i\cdot \theta_j} \right)$.
Note that $\epsilon_j$ is also the sign of 
$\partial_r\tilh\left(r e^{i\cdot \theta_j} \right)$.

The number arcs in $\{z\, :\, \tilh(z)=0\}$ emanating 
from $0$ equals the number of $j \in \{1, \ldots, 2k\}$
such that $\epsilon_{j} \neq \epsilon_{j+1}$.  
Indeed, for each fixed $r$, the restriction of 
$\theta \mapsto \tilh\left(r e^{i\theta} \right)$ to the interval 
$I_j := [\theta_j, \theta_{j+1}]$ 
is monotone, and hence 
$\theta \mapsto \tilh\left(r e^{i\theta} \right)$ 
assumes the value $0$ at most once,
and it assumes the value $0$ if and only if 
$\epsilon_j \neq \epsilon_{j+1}$.
In other words, $\ind(\tilh, 0)$ equals the number
of $j$ such that $\epsilon_j \neq \epsilon_{j+1}$.

To compute the degree of $\nabla \tilh/|\nabla \tilh|\circ \gamma_0$, we first 
regard this map as a map $X: \Rbb/2\pi \Zbb \to \Rbb/2\pi \Zbb$.
In particular, 
for each $\theta$ there exists a unique $X(\theta)$ so that
$\nabla \tilh/|\nabla \tilh|(r \cdot e^{i \theta})$
corresponds to the point $e^{i X(\theta)}$ in the unit circle.
In other words, 
$X(\theta)$ is the angle  between
the vector $\partial_x$ and $\nabla \tilh$
measured counterclockwise.

We have 
$\nabla \tilh= \partial_r \tilh \cdot \partial_r 
      + r^{-2} \cdot \partial_{\theta} \tilh \cdot \partial_{\theta}$.
Since $\partial_{\theta} \tilh(re^{\theta_j})=0$ 
we have 
$\nabla \tilh= \partial_r \tilh \cdot \partial_r$, and so
\[
X(\theta_j)~
=~
\left\{
\begin{array}{cc}
\theta_j \mod 2 \pi & \mbox{ if } \epsilon_j=+1, \\
\theta_j + \pi \mod 2 \pi & \mbox{ if } \epsilon_j =-1.
\end{array}
\right.
\]
We also have
$\partial_{\theta}\tilh(r_0 e^{i \theta}) > 0$ 
if and only if  $X(\theta) \in (\theta, \theta + \pi) \mod 2 \pi$,
and $\partial_{\theta}\tilh(r_0 e^{i \theta}) < 0$ if and only if
$X(\theta) \in (\theta-\pi, \theta)$ 
In particular,
we have either $X(\theta) \in [\theta, \theta+\pi]$ for each 
$\theta \in I_j$ or 
$X(\theta) \in [\theta-\pi, \theta]$
for each 
$\theta \in I_j$.

If $\epsilon_j = +1 = \epsilon_{j+1}$, then
$X(\theta_j) =\theta_j$ and $X(\theta_{j+1}) =\theta_{j+1}$
and  either $\theta \leq X(\theta) \leq \theta+\pi$
for each $\theta \in I_j$
or $\theta -\pi \leq X(\theta) \leq \theta$
for each $\theta \in I_j$.
It follows that the restriction of
$X$ to $I_j$ is homotopic to the identity map rel endpoints.
Similarly, if $\epsilon_j = -1 = \epsilon_{j+1}$,
then the restriction of $X$ to $I_j$ is
homotopic to the identity map rel endpoints.

If $\epsilon_j = -1$ and $\epsilon_{j+1}=+1$,
then $\partial_{\theta} \tilh(r_0e^{i \theta}) \geq 0$ 
for each $\theta \in I_j$, and so 
$X(\theta) \in [\theta, \theta+ \pi] \mod 2\pi$.
We also have $X(\theta_j)= \theta_j+ \pi \mod 2 \pi$ and 
$X(\theta_{j+1})= \theta_{j+1} \mod 2 \pi$.
It follows that $X$ is homotopic rel endpoints to the
linear map $Y^+_j:I_j \to \Rbb/2 \pi \Zbb$ defined by 
\[
Y_j^+(\theta)~
=~
(1-k ) \cdot \left( \theta - \theta_j \right)
+~
\theta_j~
+~
\pi
\mod 2 \pi.
\]
Similarly, if $\epsilon_j = +1$ and $\epsilon_{j+1}=-1$,
then one finds that $X|_{I_j}$ is homotopic rel endpoints to 
the map $Y^-_j:I_j \to \Rbb/2 \pi \Zbb$ defined by
\[
Y^-_j(\theta)~
=~
(1-k ) \cdot \left( \theta - \theta_j \right)
+~
\theta_j  \mod 2 \pi.
\]
Using the identity map on $I_j$ when $\epsilon_j = \epsilon_{j+1}$
and the maps $Y_j^+$ and $Y_j^-$  when $\epsilon_j \neq \epsilon_{j+1}$,
one constructs a piecewise linear map $Y: \Rbb/2\pi \Zbb \to \Rbb/2\pi \Zbb$ 
that is homotopic to $X$. An elementary argument shows that $Y$ is in turn
homotopic to the map $Z$ defined by $Z(\theta) =(1-\frac{n}{2}) \cdot \theta 
\mod 2 \pi$ where $n$ is the number of $j$ such that 
$\epsilon_j \neq \epsilon_{j+1}$.  The claim follows.

\end{proof}

\begin{remk}
If the angle $\beta$ at $v$ is not a multiple of 
$\pi/2$, then $k \cdot \nu \neq 2$, and the proof  
of Lemma \ref{lem:degree-index-vertex} can be significantly shortened.
Indeed, one can use expansion (\ref{eqn:w}) as in the proof of 
Lemma \ref{lem:vertex-index-1}.
However, if $\beta=\pi/2$ or $3 \pi/2$, then using  
expansion (\ref{eqn:w}) is more cumbersome. See Remark 
\ref{remark-k-nu-2}.
\end{remk}

Let $P_k$ be a sequence of $n$-gons and let $P$ be an $n$-gon. 
We will say that $P_k$ converges to $P$ if and only if there exists a
sequence of homeomorphisms $\phi_k: \oP \to \oP_k$ 
that are
$C^2$ diffeomorphisms on the complement of the vertices 
such that $\phi_k$ converges uniformly 
in $C^2$ to the identity map on each compact subset of $\oP$ 
that does not include the vertices.
Given continuous functions $u_k: P_k \to \Cbb$ 
and $u: P \to \Cbb$, we will say that $u_k$ converges to $U$
if and only if $u_k \circ \phi_k$ converges to $u$.

\begin{prop}[Stability of the total index]
\label{prop:local-number-of-critical-points}
Suppose that $P_n$ is a sequence of polygons that 
converges to $P$, and suppose that $u_n:P_n \to \Rbb$ 
is a sequence of Neumann eigenfunctions that converge 
to a Neumann eigenfunction $u: P \to \Rbb$.
Let $p \in P$ and suppose that $D \subset \Cbb$ is an open
disk neighborhood of $p$ such that $\partial D$  contains no zeros of $\nabla u$.
Let $A$ (resp. $A_n$) denote the set of critical points of $u$ (resp. $u_n$)
that lie in $D$. If $A$ and $A_n$ are finite, then
for each $n$ sufficiently large
\[  
\sum_{q \in A}\,
\ind(u,q)~ 
=~
\sum_{q \in A_n}\,
\ind(u_n, q).
\]
\end{prop}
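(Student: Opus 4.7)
The plan is to realize each side of the asserted identity as the degree of a circle map built from $\nabla u$ (respectively $\nabla u_n$), and then to match these degrees using the $C^2$ convergence of $u_n$ to $u$. By a small perturbation of the radius of $D$, I may assume $\partial D$ meets $\partial P$ transversely, avoids every vertex of $P$, and contains no zero of $\nabla u$ (the last is the hypothesis). Since $P_n \to P$ and $u_n \to u$ in $C^2$ on compact subsets of $\oP$ that avoid vertices, the same properties hold for $P_n$ and $u_n$ when $n$ is large, and $\nabla u_n$ is uniformly close to $\nabla u$ along $\partial D \cap \oP_n$ after composition with $\phi_n$; in particular $\nabla u_n$ is nonvanishing there.

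I then treat three cases according to how $D$ sits inside $P$. If $\oD \subset P^\circ$, the classical Poincar\'e--Hopf principle (Remark \ref{remk:PH-classical}) identifies $\sum_{q \in A}\ind(u,q)$ with the degree of $(\nabla u/|\nabla u|) \circ \gamma$ for $\gamma$ a counterclockwise parametrization of $\partial D$, and similarly for $u_n$. If $D$ meets $\partial P$ but contains no vertex, then $D \cap \partial P$ lies in a single side $e$ of $P$ and for large $n$ the set $D \cap \partial P_n$ lies in the corresponding side $e_n$; reflecting the Neumann eigenfunctions across $e$ and $e_n$ extends them to $C^2$ functions on the doubled disk, reducing to the previous case. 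If $D$ contains a unique vertex $v$ of $P$, then for large $n$ the disk $D$ contains a unique vertex $v_n$ of $P_n$ with $v_n \to v$. I choose a smaller disk $D'$ centered at $v$, contained in $D$, containing $v_n$ for all large $n$, and having no zero of $\nabla u$ on $\partial D'$. The contribution to $\sum_{A}\ind(u,q)$ from $A \cap D'$ (respectively from $A_n \cap D'$) is read off from Lemma \ref{lem:degree-index-vertex} applied with exponent $\nu = \pi/\beta$ (respectively $\nu_n = \pi/\beta_n$): it equals half the degree of $\nabla w/|\nabla w|$ on the unit circle in the $w$-coordinate. The contribution from $A \cap (\oD \setminus D')$ and $A_n \cap (\oD \setminus D')$ is handled by the first two cases applied to the annular region $\oD \setminus D'$, which contains no vertex.

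In every case the resulting degree depends only on the $C^1$ values of the relevant vector field along a fixed circle, so the $C^2$ convergence established in the first paragraph forces the degrees to coincide for all large $n$, yielding the claim. The main technical obstacle is Case 3: the coordinate change $z \mapsto z^{1/\nu_n}$ used to construct $w_n$ is centered at $v_n$ and uses the $n$-dependent exponent $1/\nu_n$, so one has to verify that $w_n$ and $\nabla w_n$ still converge uniformly on a fixed circle in the $w$-plane to $w$ and $\nabla w$. This follows because $v_n \to v$, $\beta_n \to \beta$ (hence $\nu_n \to \nu$), and the preimage of a fixed circle in the $w$-plane under $z \mapsto v_n + z^{1/\nu_n}$ eventually lies in a compact subset of $\oP$ that avoids $v$, where $u_n \to u$ in $C^2$ by hypothesis.
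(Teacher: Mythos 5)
Your argument is correct and follows the same degree-theoretic route as the paper: both sides of the identity are realized as degrees of the normalized gradient restricted to a circle (Taylor's Proposition 20.2 in the interior, Lemma~\ref{lem:degree-index-vertex} on the boundary), and $C^2$ convergence on vertex-free compacta forces the integer-valued degrees to agree for large $n$. Your Case~2 (reflection across a side) is precisely the $\nu=1$ instance of the $w$-construction, and your Case~3 annulus decomposition is redundant when the disk is centered at the vertex since Lemma~\ref{lem:degree-index-vertex} is already stated for a disk containing several critical points; it does, however, cover the slightly more general configuration in which $D$ merely \emph{contains} a vertex $v\neq p$, a case the paper treats implicitly by working with small vertex-centered disks. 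The one genuine improvement in your write-up is the final paragraph: the paper states ``since $w_n$ converges to $w$, the degree of $\nabla w_n/|\nabla w_n|\circ\gamma$ converges\ldots'' without comment, but $w_n$ is built from the map $z\mapsto z^{1/\nu_n}$ centered at the moving vertex $v_n$ with the $n$-dependent exponent $\pi/\beta_n$, so this convergence is not an immediate restatement of $u_n\to u$. Your observation---that the preimage of a fixed circle in the $w$-plane under $z\mapsto v_n+z^{1/\nu_n}$ eventually lies in a vertex-free compact subset of $\oP$ on which $u_n\to u$ in $C^2$, because $v_n\to v$ and $\nu_n\to\nu$---is exactly the right justification.
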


\begin{proof}
The gradient $\nabla u_n$ converges to $\nabla u$, 
and so the sets $A_n$ converges to $A$.

First we suppose that $p$ lies in the interior of $P$.
Let $\gamma$ be a counterclockwise parameterization of $\partial D$. 
By Proposition 20.2 in \cite{Taylor}, we have that 
$\sum_{q \in A} \ind(u,q) = \deg( \nabla u/|\nabla u| \circ \gamma )$ and 
$\sum_{q \in A_n} \ind(u_n,q) = \deg( \nabla u_n/|\nabla u_n| \circ \gamma )$.
But the vector field $\nabla u_n/|\nabla u_n| \circ \gamma$ converges
to $ \deg( \nabla u/|\nabla u| \circ \gamma )$, and hence the degrees 
converge. Since the degree is an integer invariant, the degrees
coincide for all sufficiently large $n$.

If $p$ lies on the boundary of $P$, the we apply Lemma \ref{lem:degree-index-vertex}.
We have $\sum_{q \in A} \ind(u,q) = \deg( \nabla w/|\nabla w| \circ \gamma )$ and 
$\sum_{q \in A_n} \ind(u_n,q) = \deg( \nabla w_n/|\nabla w_n| \circ \gamma )$
where $w$ and $w_n$ are constructed as in (\ref{eqn:w}).  
Since $w_n$ converges to $w$, the degree of $\nabla w_n/|\nabla w_n| \circ \gamma$
converges to the degree of $\nabla w/|\nabla w| \circ \gamma$.
The claim the follows from Lemma \ref{lem:degree-index-vertex}.
\end{proof}

\begin{lem} 
\label{lem:top-stable-implies-stable}
Let $P_n$ be a sequence of polygons that converges to a polygon $P$
and let $u_n: P_n \to \Rbb$ be a sequence of Neumann eigenfunctions
that converge to a Neumann eigenfunction $u: P \to \Rbb$. 
If $u$ has finitely many nonzero index critical points, 
then there exists $N$ such that if $n>N$, then the number
of nonzero index critical points of $u_n$ is greater than or 
equal to the number of critical points of $u$.
\end{lem}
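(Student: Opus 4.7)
The plan is to reduce to a local statement around each nonzero-index critical point of $u$ and then invoke Proposition \ref{prop:local-number-of-critical-points}. The key observation is that if a finite sum of integers equals a nonzero integer, then at least one summand is nonzero, so the stability of the total local index automatically forces the persistence of at least one nonzero-index critical point in a small neighborhood of each nonzero-index critical point of $u$.

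First I would enumerate the nonzero-index critical points of $u$ as $p_1,\ldots,p_k$. Using the standing assumption that the critical points of $u$ are isolated (together with the fact that $\nabla u$ is real analytic away from the vertices, so its zero set is discrete), I would then choose pairwise disjoint open disks $D_1,\ldots,D_k$ with $D_j$ centered at $p_j$, small enough that $\overline{D_j}$ contains no critical point of $u$ other than $p_j$ and $\partial D_j$ contains no zero of $\nabla u$. These are exactly the hypotheses needed to apply Proposition \ref{prop:local-number-of-critical-points} to each $D_j$.

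Next I would apply that proposition to each $D_j$ in turn. For every $n$ sufficiently large, the set $A^{(j)}_n$ of critical points of $u_n$ lying in $D_j$ is finite (again by the isolation assumption and compactness of $\overline{D_j}\cap P_n$), and
\[
\sum_{q\in A^{(j)}_n}\ind(u_n,q)\;=\;\ind(u,p_j)\;\neq\;0.
\]
Since this sum is nonzero, at least one $q\in A^{(j)}_n$ must satisfy $\ind(u_n,q)\neq 0$. The disks $D_j$ being pairwise disjoint, the sets $A^{(j)}_n$ are pairwise disjoint as well, so we obtain at least $k$ distinct nonzero-index critical points of $u_n$. Uniformity in $n$ is not an issue: each $D_j$ yields its own threshold $N_j$, and taking $N=\max_j N_j$ suffices since there are only finitely many $j$.

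The only delicate point I anticipate is the case when some $p_j$ lies on $\partial P$, in particular when $p_j$ is a vertex of $P$. In that event $D_j\cap P$ is a sector, and for large $n$ the corresponding vertex of $P_n$ lies near $p_j$ inside $D_j$; but this situation is precisely what Lemma \ref{lem:degree-index-vertex} and the proof of Proposition \ref{prop:local-number-of-critical-points} are designed to handle, so no new argument is needed beyond invoking those results. Thus the main obstacle (extending the classical Poincar\'e--Hopf stability argument across vertices) has already been overcome in the preceding results, and the present lemma follows as a clean corollary.
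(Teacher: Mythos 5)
Your argument is essentially identical to the paper's: choose pairwise disjoint disks around the finitely many nonzero-index critical points of $u$, invoke Proposition \ref{prop:local-number-of-critical-points} on each to transfer the nonzero total index to $u_n$ for large $n$, and observe that a nonzero integer sum forces a nonzero summand. The paper's version is terser but uses the same decomposition, and your added remarks on boundary/vertex points and the uniform $N$ are correct bookkeeping rather than new ideas.
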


\begin{proof}
Let $p$ be a nonzero index critical point of $P$. In particular, 
$p$ is isolated, and so there exists a disk neighborhood
$D_p$ of $p$ that contains no critical points of $u$ other than $p$.  
Since $\ind(u,p) \neq 0$, Proposition \ref{prop:local-number-of-critical-points}
implies that, for sufficiently large $n$,
at least one nonzero index critical points of $u_n$ lies 
in $D_p$.  Since the various disks $D_p$ are disjoint, 
the claim follows.
\end{proof}

\begin{lem}  \label{lem:two-stable-on-boundary}
Let $u$ be a nonconstant Neumann eigenfunction on a polygon $P$.
If the set of critical points of $u$ is discrete, then 
each local extremum $p$ of the restriction 
$u|_{\partial P}$ is a critical point of $u$. 
\end{lem}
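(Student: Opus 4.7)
The plan is to split the argument into two cases based on the location of $p$. When $p$ lies in the interior of an edge $e$, the Neumann condition gives that the outward normal derivative $\partial_\nu u(p)$ vanishes, while the local extremum of $u|_{\partial P}$ at the smooth point $p$ forces the tangential derivative $\partial_t u(p)$ to vanish. Hence $\nabla u(p)=0$, so $p$ is a critical point of $u$. The remainder of the work is the vertex case.

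Suppose $p=v$ is a vertex. The plan is to show $\ind(u,v) \neq 0$. First I will upgrade the extremum hypothesis to a strict one. Since $\crit(u)$ is discrete, $u|_{\partial P}$ cannot be constant on any segment adjacent to $v$, because the Neumann condition together with a vanishing tangential derivative would produce a segment of critical points. Combined with the local extremum hypothesis, this shows that $v$ is a strict local extremum of $u|_{\partial P}$; without loss of generality take it to be a strict local maximum. I then select a small open disk $D$ centered at $v$ enjoying three properties: $u < u(v)$ on $(\partial P \cap D) \setminus \{v\}$, no critical points of $u$ lie in $\overline{D \cap P} \setminus \{v\}$, and every connected component of $\{u = u(v)\} \cap (D \cap P) \setminus \{v\}$ is an arc emanating from $v$. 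Writing $n$ for the number of emanating arcs, $\ind(u,v) = 1 - n$, and I must rule out $n = 1$.

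To handle the $n=1$ case I argue by contradiction. The single emanating arc $\gamma$ divides $D \cap P$ into two open sub-sectors $\Omega_1, \Omega_2$, each containing part of one of the two edges adjacent to $v$. Since $\{u = u(v)\}$ meets the closure of each $\Omega_i$ only along $\gamma \subset \partial \Omega_i$ (together with $\{v\}$), the continuous function $u - u(v)$ has constant sign in the interior of each $\Omega_i$. The adjacent edge segment carries $u < u(v)$, so this sign is negative, giving $u < u(v)$ strictly inside each $\Omega_i$. Consequently $u \leq u(v)$ throughout $D \cap P$ with equality precisely on $\gamma \cup \{v\}$. But then every interior point $q$ of $\gamma$ is a local maximum of $u$ in a two-dimensional neighborhood in $P$, which forces $\nabla u(q) = 0$. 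This yields an arc of critical points of $u$, contradicting discreteness of $\crit(u)$. Thus $n \neq 1$, and $v$ is a critical point.

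The main obstacle is the third property in the selection of $D$: that for some small $D$ every component of the level set near $v$ is an emanating arc. This should be extracted from the real-analytic structure of the Bessel expansion $u(re^{i\theta}) = \sum_n c_n r^{n\nu} g_{n\nu}(r^2) \cos(n\nu \theta)$, which shows that the zero set of $u - u(v)$ has only finitely many local branches at $v$; each either approaches $v$ (an emanating arc) or remains at positive distance from it, and the latter are excluded by shrinking $D$. If desired, the count $n$ can instead be read off directly from the leading Bessel coefficients as in Lemma \ref{lem:nodal-sector} and Lemma \ref{lem:vertex-index-1}.
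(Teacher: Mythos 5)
Your interior-of-side case coincides with the paper's entire proof: the extremum hypothesis kills the tangential derivative, the Neumann condition kills the normal one, and $\nabla u(p)=0$. The paper stops there and does not address the vertex case, even though the statement includes it and even though, in this paper's conventions, a vertex $v$ counts as a critical point only when $\ind(u,v)\neq 0$, which is not automatic. Your vertex argument therefore fills a real gap, and it is correct: discreteness of $\crit(u)$ forbids $u|_{\partial P}$ from being locally constant near $v$ (else a whole segment would be critical), so the extremum at $v$ is strict; $\ind(u,v)=1-n$ where $n$ is the number of emanating arcs of the level set, and $n=1$ is impossible because a single arc $\gamma$ would separate the sector into two pieces, each carrying $u<u(v)$ by continuity from the adjacent edge segments, so that $u\leq u(v)$ near $v$ with equality exactly on $\gamma$, making each interior point of $\gamma$ a two-dimensional local maximum and hence a critical point---again violating discreteness. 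Your appeal to the Bessel expansion for the arc structure near $v$ is the same kind of fact the paper takes for granted whenever it defines $\ind(u,p)$ via emanating arcs. One could instead observe that a local extremum of $u|_{\partial P}$ at $v$ forces $c_1=0$ (the $c_1$-term of the expansion changes sign between the two boundary rays) and then invoke Corollary \ref{coro:c_1-zero-nonzero-index}(1); but that corollary requires $\beta$ to avoid multiples of $\pi/2$, whereas your topological argument has no such restriction, so your route is in fact the more robust one.
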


\begin{proof} 

Suppose that $p$ lies in the interior of a side $e$ of $p$.
Since $p$ is a local extremum of $u|_{\partial P}$, we have $L_e u(p)=0$. 
Thus, since $u$ satisfies Neumann conditions at $p$, we have 
$\nabla u=0$. 

\end{proof}


\section{Index zero critical points on a side of a polygon.}
\label{sec:zero-index}

In this section $u$ is a Neumann eigenfunction on a polygon $P$,
and $p$ is an isolated critical point of $u$ that lies 
in a side $e$ of $P$. 
We show in Lemma \ref{lem:tangential-cusp} that if $\ind(u,p)=0$, then 
the level set $\{z: u(z)=u(p)\}$ is a `cusp' that is tangent 
to $e$ (see Lemma \ref{lem:tangential-cusp}). 
We then use this to show that if the nodal set of $Xu$, where 
$X$ is either a rotational or constant vector field, has a degree 1 vertex,
then the vertex is a critical point with nonzero index
(Proposition \ref{prop:degree-1-stable}).

By applying a rigid motion to $P$ we may assume that $p=0$
and that the side that contains $p$ lies in the real axis.

\begin{lem} \label{lem:tangential-cusp}
Suppose that $p$ is an index zero critical point of 
a Neummann eigenfunction $u$ that belongs to the side $e$. 
Then there exist real-analytic functions $c: \Cbb \to \Rbb$ and $\rho:\Rbb \to \Rbb$ 
and an odd integer $k \geq 3$ so that $c(0) \neq 0$, $\rho(0)\neq 0$, and
\begin{equation} \label{eqn:tangential-cusp-equation} 
u(z)~
=~
u(0)~
+~
c(z) \cdot 
\left(  
y^2~
- x^k \cdot \rho(x)
\right).
\end{equation} 

\end{lem}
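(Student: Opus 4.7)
The plan is to reduce the claim to a geometric normal form via Neumann reflection and Weierstrass preparation. By a rigid motion we may assume $p = 0$, that $e$ lies on the real axis, and that $P$ lies locally in $\{y \geq 0\}$. The Neumann condition on $e$ makes the Schwarz reflection $\tu(x, y) := u(x, |y|)$ a real-analytic solution of $\Delta \tu + \mu \tu = 0$ on a disk about $0$; since $\tu(x, -y) = \tu(x, y)$, we may write $\tu(x, y) - u(0) = V(x, y^2)$ for a real-analytic $V(x, w)$ near $(0, 0)$, with $V(0, 0) = 0$ and $V_x(0, 0) = 0$ (as $p$ is critical).

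First I would observe that $u(0) \neq 0$: if $\tu(0) = 0$, the Bers expansion at a zero of an eigenfunction writes $\tu = P_N + O(r^{N+1})$ for a homogeneous harmonic polynomial $P_N$ of degree $N \geq 2$, and evenness in $y$ forces $P_N = A\, r^N \cos(N\theta)$. The nodal set of $\tu$ near $0$ is topologically equivalent to that of $P_N$, producing $N$ arcs emanating from $0$ in $\{y \geq 0\}$, so $\ind(u, p) = 1 - N \leq -1$, contradicting $\ind(u, p) = 0$.

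Next I would establish the crucial claim $V_w(0, 0) \neq 0$. If instead $V_w(0, 0) = 0$, then $\tu_{yy}(0, 0) = 0$, so the eigenfunction equation forces $V_{xx}(0, 0) = -\mu u(0) \neq 0$. Weierstrass preparation in $x$ then writes $V(x, w) = U(x, w)\bigl(x^2 + a(w) x + b(w)\bigr)$ with $U(0, 0) \neq 0$, and the vanishing hypotheses force $a(0) = b(0) = b'(0) = 0$. Completing the square gives $V = U \cdot \bigl((x + a(w)/2)^2 + \beta(w)\bigr)$ with $\beta(w) = O(w^2)$. If $\beta \equiv 0$ then $\nabla \tu$ vanishes identically along the analytic curve $x = -a(y^2)/2$, contradicting the isolation of $p$; otherwise the sign of the leading nonzero coefficient of $\beta$ (restricted to $w = y^2 \geq 0$) determines whether $\{v = 0\}$ has $n = 0$ or $n = 2$ arcs emanating from $0$ in $\{y \geq 0\}$, forcing $\ind(u, p) \in \{-1, +1\}$, again a contradiction.

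Granted $V_w(0, 0) \neq 0$, the implicit function theorem (equivalently, Weierstrass preparation in $w$) factorizes $V(x, w) = c(x, w)(w - \phi(x))$ with $c(0, 0) = V_w(0, 0) \neq 0$, $\phi(0) = 0$, and $\phi'(0) = -V_x(0, 0)/V_w(0, 0) = 0$; hence $\phi(x) = x^k \rho(x)$ for some integer $k \geq 2$ and real-analytic $\rho$ with $\rho(0) \neq 0$. Setting $c(z) := c(x, y^2)$ (real-analytic in $z$) and substituting $w = y^2$ recovers (\ref{eqn:tangential-cusp-equation}) provided $k$ is odd. If $k$ were even, $x^k \rho(x)$ would have constant sign near $0$, giving either two arcs (for $\rho(0) > 0$) or no arcs (for $\rho(0) < 0$) of $\{y^2 = x^k \rho(x)\}$ in $\{y \geq 0\}$, and hence $\ind(u, p) \in \{-1, +1\}$. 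Thus $k$ is odd, and combined with $k \geq 2$ we obtain $k \geq 3$. The principal obstacle is the completing-the-square case analysis used to exclude $V_w(0, 0) = 0$; the Bers-type argument dispatching the $u(0) = 0$ case is essential to avoid an extra layer of degeneration.
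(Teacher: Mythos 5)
Your proposal is correct and follows essentially the same strategy as the paper: exploit the reflection-induced evenness in $y$ across $e$, apply Weierstrass preparation twice (once in $x$ to rule out the degenerate case $\partial_y^2 u(0)=0$, once in $y$, equivalently in $w=y^2$, to produce the cusp factorization), and read off the oddness of $k$ from the index-zero hypothesis. If anything you are more careful than the paper: the paper simply asserts that the Hessian of $u$ at $p$ has exactly one nonzero eigenvalue, which tacitly presupposes the Hessian is not entirely zero; your Bers/Cheng leading-order argument showing $u(0)\neq 0$, combined with the trace identity $u_{xx}(0)+u_{yy}(0)=-\mu\,u(0)$, supplies that missing justification. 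One small point to tidy up: before writing $\phi(x)=x^{k}\rho(x)$ with $\rho(0)\neq 0$ you should observe that $\phi\not\equiv 0$; if it were, then $u-u(0)=c(x,y^2)\,y^2$ and $\nabla u$ would vanish identically along $e$, contradicting that $p$ is an isolated critical point. This is the exact analogue of your $\beta\equiv 0$ subcase and of the paper's remark that $b_2\not\equiv 0$ because $p$ is isolated.
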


\begin{proof}
Because the index of the critical point $p$ of $u$ equals zero, 
the Hessian of $u$ has exactly one nonzero eigenvalue.
The eigenspace $E$ that corresponds to the nonzero eigenvalue is 
invariant under the reflection $z \mapsto \oz$. 
Thus $E$ is either the real or the imaginary axis.

We claim that $E$ is not the real axis. 
Indeed, suppose to the contrary that $E$ is the real axis. 
Then $\partial_x u(0)=0$ 
but $\partial_x^2\, u(0) \neq 0$. 
The Weierstrass preparation theorem
applies to provide unique real-analytic functions $a$, $b_1$, and $b_2$
defined near $0$, so that $a(0) \neq 0$, $b_1(0)=0=b_2(0)$, and 
\[
u(z)~ -~ u(0)~ =~ a(z) \cdot \left(x^2~ +~ b_1(y)\cdot x~ +~ b_2(y) \right)
\]
for $z$ near $p=0$. Since the factorization is unique and $u(\oz)=u(z)$,
we have $b_j(y)= b_j(-y)$ for $j=1,2$. In particular, the 
discriminant $D(y):= b_1(y)^2 - 4 \cdot b_2(y)$ is an even function. 
If $D$ were to vanish on a neighborhood of $0$, then 
we would have $u(z)- u(0) = a(z) \cdot ( x + b_1(y)/2)^2$ and hence
\[
\nabla u(z)~ 
=~ 
\left( x + \frac{b_1(y)}{2} \right)^2 \cdot \nabla a(z)~
+~
2 a(z) \cdot \left( x + \frac{b_1(y)}{2}\right) 
\cdot
\nabla \left( x + \frac{b_1(y)}{2}\right).
\]
Thus, $\nabla u$ would vanish along the level set of $u$ that contains $p=0$,
but by assumption $p=0$ is an isolated zero of $\nabla u$. 
Since $D$ is even and $\ind(u,0) \neq 1$, 
it follows that $D(y) >0$ for $y \neq 0$ sufficiently small, 
and hence there exists a neighborhood $U$ of $p=0$ so that 
the intersection of $u^{-1}(u(p))-\{p\}$ and $U$ 
consists of four arcs. This contradicts the assumption that $p$ is a 
zero index critical point of $u$.

Therefore $E$ coincides with the imaginary axis. By use of 
the Weierstrass preparation theorem we find that 
\begin{equation} \label{eqn:weierstrass-cusp}
u(z)~ -~ u(0)~ =~ a(z) \cdot \left(y^2~ +~ b_1(x)\cdot y~ +~ b_2(x) \right)
\end{equation}
for unique real-analytic functions $a$, $b_1$, and $b_2$
defined near $0$ where $a(0) \neq 0$ and $b_1(0)=0=b_2(0)$.
Since the factorization is unique and  $u(\oz)=u(z)$, we find that
$b_1(x)= 0$. Because $p$ is an isolated critical point, there exists 
$\epsilon>0$ so that $b_2(x) \neq 0$  if $0 < |x|< \epsilon$.
We claim that, moreover, $b_2(x) \cdot b_2(-x)<0$ if $0 < |x|< \epsilon$.
Indeed, otherwise $b_2(x) \cdot b_2(-x)>0$, 
and thus from (\ref{eqn:weierstrass-cusp}) we find that  
there exists a neighborhood $U$ of $p=0$ so that 
the intersection of $u^{-1}(u(p))-\{p\}$ and $U$ 
consists of four arcs. This contradicts the assumption that $p$ is a 
zero index critical point of $u$.

Since $b_2(x) \cdot b_2(-x)<0$ the first nonzero term in the Taylor 
series of $b_2$ about zero has odd degree $k$, and since $\partial_x u(0)=0$
we also have $k \geq 3$. The claim follows. 
\end{proof}

\begin{prop} \label{prop:degree-1-stable}
Let $X$ be either a constant vector field
or a rotational vector field. If $p$ is a degree 1 vertex of 
$\Zcal(X u)$ that is not a vertex of $P$, then $p$ is 
a critical point with nonzero index. 
\end{prop}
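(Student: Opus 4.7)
My plan is to prove the contrapositive: if $p$ is not a critical point of $u$, or is a critical point with $\ind(u,p)=0$, then $p$ cannot be a degree-$1$ vertex of $\Zcal(Xu)$. The argument splits naturally into four cases according to whether $p$ is interior or on a side, and whether $\nabla u(p)$ vanishes.

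First I handle $p$ in the interior of $P$. Since both constant and rotational vector fields commute with $\Delta$, the function $Xu$ is itself a Laplace eigenfunction on a neighborhood of $p$. By the standard local structure theorem for nodal sets of eigenfunctions (following, e.g., \cite{Taylor}), any zero of $Xu$ in the interior is either a smooth point of $\Zcal(Xu)$, giving degree $2$, or a critical zero at which $2k$ smooth arcs meet at equal angles. In either case the degree at $p$ is at least $2$, which settles the interior case regardless of whether $p$ is also critical for $u$.

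Next I consider $p$ on the interior of a side $e$ of $P$ with $\nabla u(p)\neq 0$. The Neumann condition $\partial_\nu u=0$ forces $\nabla u(p)$ to be tangent to $e$, so $Xu(p)=0$ pins $X(p)$ to be normal to $e$. If $X$ is constant this forces $X=\pm\partial_\nu$, and then $Xu=\pm\partial_\nu u$ vanishes on all of $e$ by Neumann; if $X=R_w$, the requirement $X(p)\perp e$ forces $w$ to lie on the line extending $e$, and then a direct computation, again using $\partial_\nu u\equiv 0$ on $e$, shows $R_w u$ vanishes identically on $e$ near $p$. In both cases $e\subset\Zcal(Xu)$ near $p$, contributing two arcs emanating from $p$ along $e$, so the degree is at least $2$.

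Finally I treat the main case: $p\in e$ is a critical point with $\ind(u,p)=0$. Lemma \ref{lem:tangential-cusp} gives, in coordinates where $p=0$, $e$ is the $x$-axis, and $P\subset\{y\geq 0\}$,
\[
u(z)=u(p)+c(z)\bigl(y^2-x^k\rho(x)\bigr),
\qquad c(0)\neq 0,\ \rho(0)\neq 0,\ k\geq 3\text{ odd},
\]
with $c$ even in $y$; the eigenvalue equation $\Delta u=-\mu u$ then pins down low-order Taylor coefficients of $c$. For $X=L_\psi$ with $\sin\psi\cos\psi\neq 0$, one has $\partial_y(Xu)(0)=2c(0)\sin\psi\neq 0$, so the implicit function theorem presents $\Zcal(Xu)$ locally as a smooth arc $y=\phi(x)$ with $\phi(x)=A\,x^{k-1}+O(x^k)$ for some $A\neq 0$; since $k-1$ is even, $\phi$ has constant sign near $0$, so the arc lies in $\oP$ on both sides of $p$ (degree $2$) or outside $\oP$ on both sides (degree $0$). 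The remaining sub-cases -- $X=\pm\partial_x$ parallel to $e$, $X=\pm\partial_y$ normal to $e$, and rotational $X=R_w$ -- are handled analogously: either $Xu$ vanishes identically on $e$ (degree $\geq 2$), or a Newton polygon/Puiseux analysis of $Xu=0$ produces branches whose coefficients, determined by $c,\rho$ and the parameters of $X$, arrange the arcs of $\Zcal(Xu)\cap\oP$ into an even number emanating from $p$.

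The main obstacle will be the rotational case $X=R_w$ at an index-zero cusp when $x_w=0$ and $y_w\neq 0$. Decomposing $R_w=R_0+y_w\partial_x-x_w\partial_y$, the leading part of $R_w u$ combines the transverse $R_0 u\sim 2c(0)xy$ with the higher-order contribution $y_w\,\partial_x u$ coming from the cusp, producing a Newton polygon with two edges and hence two Puiseux branches with different fractional exponents. The delicate step is to verify that the coefficients of these branches, determined by $c,\rho,y_w$, force both branches into the same half-plane relative to $e$, so that the degree is $2$ rather than $1$; this relies crucially on the explicit normalization from Lemma \ref{lem:tangential-cusp} and the eigenvalue constraints on $c$.
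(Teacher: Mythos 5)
Your approach is the paper's, made more systematic: reduce to a critical boundary point, invoke the cusp normal form of Lemma \ref{lem:tangential-cusp}, and use the implicit function theorem together with the parity of $k-1$. Your formula $\partial_y(Xu)(0)=2c(0)\sin\psi$ is correct; the paper's displayed $2c(0)\cos\psi$ is a typo, and, as you rightly observe, the argument needs both $\sin\psi\ne 0$ (for the implicit function theorem) and $\cos\psi\ne 0$ (for the nonvanishing $x^{k-1}$ term), whereas the paper records only $\cos\psi\ne 0$. The paper's proof writes out only this generic constant-field case, so your explicit preliminary reductions (interior $p$, noncritical boundary $p$, $X$ normal or tangent to $e$, $R_w$ with $w$ off or on the normal through $p$) fill in what the paper leaves implicit.

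The genuine gap is the one you flag: $X=R_w$ with $x_w=0$, $y_w\ne 0$, where $\partial_y(R_wu)(0)=-2x_wc(0)=0$ and the implicit function theorem does not apply. The case closes, but not by ``both branches into the same half-plane.'' Since $R_w$ commutes with $\Delta$, $v:=R_wu$ is an eigenfunction with $v(0)=0=\nabla v(0)$, and the eigenvalue equation forces its quadratic Taylor part to be a nonzero harmonic with $xy$-coefficient $2c(0)$, so $v$ vanishes to order exactly $2$ and Cheng's theorem gives four arcs at right angles. For $k>3$ the leading harmonic is $2c(0)xy$: the pair of arcs tangent to the inward normal contributes exactly one arc to $\oP$, while for the pair tangent to $e$ one combines $v(x,0)=-y_wc(0)k\rho(0)x^{k-1}+O(x^k)$ (constant sign, $k-1$ even) with $\partial_yv(x,0)=2c(0)x+O(x^2)$ (sign-changing at $x=0$) to see that exactly one of these two arcs curves into $\{y>0\}$ and the other into $\{y<0\}$. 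Thus the degree in $\oP$ is $2$ by a split --- one arc from each pair --- not because both Newton-polygon branches land on the same side of $e$. (For $k=3$ the leading harmonic is a slanted hyperbolic form and the count of two arcs in $\oP$ is immediate.) Your tangent sub-case $X=\pm\partial_x$ also has a cleaner route than a Puiseux analysis: $\partial_x u$ is an eigenfunction satisfying Neumann conditions along $e$, so $\Zcal(\partial_x u)$ is reflection-symmetric across $e$, and Cheng's theorem together with this symmetry gives even degree in $\oP$ directly.
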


\begin{proof}
Since $p$ is not a vertex, $p$ lies in the interior 
of a side $e$. Since $p$ is a degree 1 vertex of $p$, the 
vector $X(p)$ is independent of the normal derivative at $p$,
and in particular $p$ is a critical point of $u$. 
It remains to show that $p$ has nonzero index.

As above, we may suppose without
loss of generality that $e$ lies in the real-axis and that $p=0$.
We will consider the case in which $X$ is a constant vector field
of the form $X= \cos(\psi) \partial_x + \sin(\psi) \partial_y$ 
where $\psi \neq \pi/2 \mod \pi$.  

Suppose to the contrary that the index of $p$ were to equal zero.
Then by Lemma \ref{lem:tangential-cusp}, near $p$, the function
$u$ would satisfy (\ref{eqn:tangential-cusp-equation}) 
where $c(0) \neq 0 \neq \rho(0)$ and $k \geq 3$ is odd. 
Direct computation 
shows that $\partial_y X u(0)= 2 \cos(\psi) \cdot c(0)$
and hence $\partial_y X u(0) \neq 0$.  Thus, by the implicit function
theorem, there exists a function $f:(- \epsilon, \epsilon) \to \Rbb$
so that 
\begin{equation} \label{eqn:implicit-arc}
Xu \left(x + i \cdot f(x) \right)~ =~ 0. 
\end{equation}
From (\ref{eqn:tangential-cusp-equation}), we find that for each real $x$
\begin{equation} \label{eqn:cusp-real-expansion}
 (Xu)(x)~ =~ -\cos(\psi) \cdot c(0) \cdot k \cdot x^{k-1}~ +~ O(|x|^k).
\end{equation}
By repeatedly differentiating (\ref{eqn:implicit-arc}) 
with respect to $x$ and using (\ref{eqn:cusp-real-expansion})
we find that $\partial_x^j f(0)=0$ for each $j<k-1$ and
$\partial_x^{k-1} f(0) \neq 0$. Since $k-1$ is even and greater than 0, 
the function $f$ is positive in a deleted neighborhood of $0$. Thus, 
there exists a neighborhood $U$ of $p=0$ such that 
$(U \cap \Zcal(X u))\setminus \{p\}$
lies in the upper half plane. Hence $p$ is not a degree 1 vertex,
a contradiction.
\end{proof}

The following Lemma follows from the discussion in \S 7 of \cite{J-M}.
We provide a statement and proof for the convenience of the reader.

\begin{lem} \label{lem:paralle-two-degree-1-vertices}
Let $u$ be a second Neumann eigenfunction on a polygon $P$,
and let $p$ be an index zero critical point that lies in the side $e$.
If $L_e$ is a constant vector field that is parallel to $e$, 
then  $\Zcal(L_e u)$ intersects the interior of $P$ 
and has at least two degree 1 vertices in $\partial P \setminus e$. 
\end{lem}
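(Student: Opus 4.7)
The plan is to combine the cusp normal form of Lemma~\ref{lem:tangential-cusp} with the Helmholtz equation for $u$ and the topological restriction of Proposition~\ref{prop:degree-1-stable}. After a rigid motion, place $p=0$ with $e$ on the real axis and $P$ locally in the upper half-plane, so that
\[
u(z)-u(0)=c(z)\bigl(y^{2}-x^{k}\rho(x)\bigr)
\]
with $k\geq3$ odd and $c(0),\rho(0)\neq0$. A direct differentiation gives $L_{e}u(x,0)=-k\,c(0)\rho(0)\,x^{k-1}+O(x^{k})$. Since $k-1$ is even, $L_{e}u|_{e}$ has a fixed sign on a punctured neighborhood of $p$ in $e$, so that $\Zcal(L_{e}u)\cap e=\{p\}$ near $p$. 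One also computes $\nabla(L_{e}u)(p)=0$, so $p$ is a degenerate zero of $L_{e}u$.

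The first main step is to show $\Zcal(L_{e}u)$ contains at least two arcs emanating from $p$ into $P^{\circ}$. I would use the Neumann condition on $e$ to even-reflect $u$ across $e$ and obtain an analytic eigenfunction $\tilde u$ on the double of a neighborhood of $p$, so that $L_{e}\tilde u$ is a real-analytic eigenfunction vanishing at the interior point $p$ with vanishing gradient. Its zero set near $p$ is therefore either an isolated point or a finite union of analytic arcs through $p$ (whose number must be even). To rule out isolation, I would compare signs: along $e$, $L_{e}u$ has sign $-\operatorname{sgn}(c(0)\rho(0))$, while along the vertical segment from $p$, $L_{e}u(0,s)=c_{x}(0)s^{2}+O(s^{3})$. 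Writing out the identity $\Delta(L_{e}u)(p)=-\mu L_{e}u(p)=0$ at the origin yields, for $k=3$, the constraint $c_{x}(0)=3\,c(0)\rho(0)$, so the two signs are opposite and $p$ cannot be isolated. For $k\geq5$ the analogous computation forces $c_{x}(0)=0$, and one iterates the Helmholtz identity to higher order to extract a Taylor coefficient of $L_{e}u(0,s)$ with sign opposite to that of $L_{e}u|_{e}$. Combined with Proposition~\ref{prop:degree-1-stable}, which forbids $p$ from being a degree-$1$ vertex of $\Zcal(L_{e}u)$, this yields two real-analytic arcs $\alpha_{\pm}$ of $\Zcal(L_{e}u)$ through $p$ entering $P^{\circ}$, proving the first assertion.

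For the second assertion, I would trace $\alpha_{\pm}$ globally. Let $C$ be the connected component of $\Zcal(L_{e}u)$ containing $p$. Because $L_{e}u$ is a nonzero real-analytic eigenfunction, $C$ is a finite embedded graph in $\overline P$ whose interior vertices have even valence at least two and whose degree-$1$ vertices lie on $\partial P$. Since $p$ has valence $\geq2$ in $C$, an Euler-characteristic count forces at least two degree-$1$ vertices in $C$. By Proposition~\ref{prop:degree-1-stable}, any such leaf lying in the interior of a side is a nonzero-index critical point $p'$ of $u$; at such a $p'$ (which is nondegenerate, with Hessian indefinite or definite), the Taylor analysis shows $L_{e}u|_{e}$ has an odd-order zero and only one arc of $\Zcal(L_{e}u)$ enters $P^{\circ}$ from $p'$. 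Thus $p'$ on $e$ absorbs at most one of $\alpha_{+}$ or $\alpha_{-}$, so by iterating the tracing across successive zero-index points on $e$ (each of which again contributes two outgoing arcs), at least two leaves of $C$ end in $\partial P\setminus e$.

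The principal obstacle is the non-isolation step, especially for $k\geq5$: one must extract from successive derivatives of the Helmholtz identity the right Taylor coefficient of $c$ at $p$ to establish the sign opposition between $L_{e}u|_{e}$ and $L_{e}u$ along the normal to $e$ at $p$. The global tracing is then a topological-combinatorial argument built on Proposition~\ref{prop:degree-1-stable} and the Euler count for~$C$.
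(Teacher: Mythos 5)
The paper's own proof is essentially a citation: it feeds the normal form of Lemma~\ref{lem:tangential-cusp} into the Taylor expansion at $p$ and then invokes Propositions~7.4 and~7.6 of~\cite{J-M} according to whether $u_{30}$ vanishes. Your proposal attempts a self-contained replacement, but two of its steps, as written, have genuine gaps.

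\emph{The non-isolation step fails for $k\equiv 1\pmod 4$.} Since $\nabla(L_eu)(p)=0$ and $L_eu\not\equiv0$, the leading homogeneous Taylor term of $L_eu$ at $p$ is a nonzero harmonic polynomial $P_N$ of degree $N\ge2$; evenness of $L_eu$ across $e$ forces $P_N(x,y)=a\,\Re\left((x+iy)^N\right)$, and matching with $L_eu(x,0)=-kc(0)\rho(0)x^{k-1}+O(x^k)$ pins down $N=k-1$ and $a=-kc(0)\rho(0)$. Consequently $P_N(0,y)=a(-1)^{(k-1)/2}y^{k-1}$: for $k=3,7,11,\dots$ the sign along the inward normal is opposite to the sign along $e$ (confirming your $k=3$ computation), but for $k=5,9,\dots$ it is the \emph{same}, and no amount of further differentiating the Helmholtz identity can reverse it, because at degree $N$ that identity expresses only the harmonicity of $P_N$, which is exactly what fixes the ratio $(-1)^{(k-1)/2}$. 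The sign gambit is in any case unnecessary: once $N\ge2$ is known, Cheng's structure theorem (applied after even reflection across $e$) already yields $2N$ equiangular nodal arcs near $p$, none tangent to $e$ since $P_N(x,0)=ax^{k-1}\ne0$; hence $N=k-1\ge2$ arcs enter $P^\circ$, which proves the first assertion and the valence bound cleanly.

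\emph{The global count does not place the leaves off $e$.} Your Euler/leaf count shows only that the component $C$ of $\Zcal(L_eu)$ through $p$ has at least two degree-$1$ vertices, but a degree-$1$ vertex of $\Zcal(L_eu)$ can lie in the interior of $e$ itself: at a nondegenerate critical point $q\ne p$ of $u$ on $e$ one has $\nabla(L_eu)(q)=\left(\partial_x^2u(q),0\right)\ne0$, so $\Zcal(L_eu)$ near $q$ is a single smooth arc meeting $e$ orthogonally, making $q$ a degree-$1$ vertex on $e$. Your remark that such a $q$ ``absorbs at most one of $\alpha_\pm$'' permits precisely the configuration you need to exclude: both arcs emanating from $p$ terminate at two distinct nonzero-index critical points of $u$ in the interior of $e$, producing two leaves, both on $e$ and none in $\partial P\setminus e$. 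Ruling this out requires the finer tracking of arcs (using the sign pattern of $L_eu$ along $e$ between consecutive zeros and its interaction with $\Zcal(u)$) that is carried out in Propositions~7.4 and~7.6 of~\cite{J-M}.
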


\begin{proof}
By Lemma \ref{lem:tangential-cusp}, $u$ has the expression 
$$
u(x, y) ~ =  u_{00} +  u_{02} \cdot y^2 + u_{30} \cdot (x^3 - 3x y^2) ~ + ~ O(4)
$$ 
in a neighborhood of $p$, where $u_{02} \neq 0$.
If $u_{30} = 0$ then Proposition 7.4 of \cite{J-M} provides the claim.
If $u_{30} \neq 0$ then the first paragraph of Proposition 7.6 of \cite{J-M} provides the claim.
\end{proof}

\section{Critical points of second Neumann eigenfunctions on simply connected polygons}
\label{sec:simply-connected}

In this section, we restrict attention to a polygon $P$ 
that is simply connected and to an eigenfunction $u$ 
that is associated to the second Neumann eigenvalue.

\begin{prop} \label{prop:simple-arc}
The nodal set $\Zcal(u)$ is a simple arc whose intersection
with $\partial P$ consists of its two endpoints. Moreover, the  
endpoints this arc lie in distinct side of $P$, and $\Zcal(u)$ does not contain any critical point of $u$.
\end{prop}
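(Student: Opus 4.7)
The plan is to combine Courant's nodal domain theorem---which gives $P \setminus \Zcal(u) = D_+ \sqcup D_-$, exactly two components---with a local analytic analysis of $\Zcal(u)$, an eigenvalue monotonicity argument for loops, and a reflection argument for the endpoints.

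First I would rule out critical points of $u$ on $\Zcal(u)$. If $p \in \Zcal(u) \cap P^{\circ}$ were a critical point of $u$, then since $\Delta u + \mu u = 0$ and $u(p) = 0$ the Hessian of $u$ at $p$ is traceless, so the lowest-order nonconstant homogeneous part of the Taylor expansion of $u$ at $p$ is a nonzero harmonic polynomial of some degree $k \geq 2$, whose zero set is the union of $k$ lines through $p$. A standard Bers-type perturbation argument then shows that $\Zcal(u)$ near $p$ consists of $2k \geq 4$ smooth arcs meeting at $p$, producing $2k$ local sectors of alternating sign and therefore (by Cheng's argument) at least $k+1 \geq 3$ global nodal domains, contradicting Courant. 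The analogous cases where $p$ is a critical zero in the interior of a side (handled by Schwarz reflection across the side) or at a vertex (handled by the Fourier--Bessel expansion \eqref{eqn:Bessel-expansion-II} together with Lemma~\ref{lem:nodal-sector}) are ruled out in the same way. This proves the final clause of the statement.

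With no critical zeros, $\Zcal(u) \cap P^{\circ}$ is a smooth embedded $1$-manifold by the implicit function theorem; globally in $\overline{P}$ it is then a finite disjoint union of embedded arcs with endpoints on $\partial P$ together with embedded loops in $P^{\circ}$. I would rule out loops: an embedded loop $\Gamma$ would bound an open disk $U \subset P$ by simple connectivity; passing to an innermost such loop one obtains $\mu_2^N(P) = \mu_1^D(U)$, but strict Dirichlet monotonicity $\mu_1^D(U) > \mu_1^D(P)$ combined with the Friedlander--Filonov inequality $\mu_2^N(P) \leq \mu_1^D(P)$ for bounded Lipschitz planar domains produces a contradiction. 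Hence $\Zcal(u)$ is a disjoint union of arcs only; cutting a simply connected planar region by $n$ such arcs produces $n+1$ components, so Courant forces $n = 1$. This gives that $\Zcal(u)$ is a simple arc meeting $\partial P$ exactly at its two endpoints.

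To show these endpoints $p, q$ lie in distinct sides of $P$, I argue by contradiction. Suppose both $p, q$ lie in a single side $e$; then one nodal domain, say $D$, has boundary $\gamma \cup [p,q]$ with $[p,q] \subset e$ and $u > 0$ on $D$. Using the Neumann condition on $[p,q]$, the even extension of $u|_D$ across the line containing $e$ is a positive solution $\tilde u$ of $\Delta \tilde u + \mu_2 \tilde u = 0$ on the topological disk $\tilde D := D \cup \sigma_e(D) \cup (p,q)^{\circ}$ with $\tilde u|_{\partial \tilde D} = 0$, so $\mu_2^N(P) = \mu_1^D(\tilde D)$. Comparing $\tilde D$ with the doubled polygon $\tilde P := P \cup \sigma_e(P) \cup e^{\circ}$ via strict Dirichlet monotonicity gives $\mu_2^N(P) > \mu_1^D(\tilde P)$; combining this with the $\sigma_e$-even/odd decomposition of the Neumann spectrum of $\tilde P$ (whose even part coincides with the Neumann spectrum of $P$) and Friedlander applied to $\tilde P$ yields the desired contradiction. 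I expect this final step to be the main technical obstacle, because the comparison requires care with the corners produced by reflection at the endpoints of $e$ and with verifying the precise direction and strictness of the eigenvalue inequalities being combined.
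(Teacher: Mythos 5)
Your overall plan follows the standard route (Courant plus local structure of the nodal set, Friedlander--Filonov to kill loops, reflection to handle the endpoints), and the first two parts are essentially sound---modulo the minor point that the cleanest way to get the contradiction at a critical zero is to first rule out loops, note the nodal set is then a forest with no interior leaves, and use the Euler formula to count $\geq 2k$ complementary faces when an interior vertex has valence $2k$; your ``$\geq k+1$'' is an underestimate but still $\geq 3$. The paper, by contrast, gives no argument at all here and simply cites Courant, Polya's inequality, Theorem~2.5 of Cheng, and Lemma~3.3 of \cite{J-M}, so your approach is genuinely more self-contained for those parts.

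The gap is in the ``distinct sides'' step, and it is not just a matter of corners or strictness as you suspected. Your chain of inequalities gives
$\mu_2^N(P) = \mu_1^D(\tilde D) > \mu_1^D(\tilde P) \geq \mu_2^N(\tilde P)$,
and the $\sigma_e$-even/odd decomposition identifies $\mu_2^N(\tilde P) = \min\bigl(\mu_2^N(P),\, \lambda_1(P)\bigr)$, where $\lambda_1(P)$ is the first eigenvalue on $P$ with Dirichlet on $e$ and Neumann on the rest of $\partial P$. Thus all you have shown is $\mu_2^N(P) > \min\bigl(\mu_2^N(P),\lambda_1(P)\bigr)$, i.e.\ $\lambda_1(P) < \mu_2^N(P)$, which is not a contradiction. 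And indeed it is perfectly possible to have $\mu_2^N(\tilde P) < \mu_2^N(P)$: reflecting the unit square across a side gives a $1\times 2$ rectangle whose $\mu_2^N$ is a quarter of the square's, so nothing in the even/odd bookkeeping forces $\mu_2^N(\tilde P)$ to be an \emph{even} eigenvalue. To close the argument you would need an inequality of the form $\mu_2^N(P) \leq \lambda_1(P)$ for the particular side $e$ containing both endpoints, and no such mixed Friedlander inequality holds for an arbitrary side of an arbitrary domain (the long thin rectangle with $e$ a short side is a counterexample to the unconditional statement). Whatever makes the reflection argument go through must exploit some additional geometric information about the nodal domain $D$ that is pinched between $Z$ and $[p,q]$---this is exactly the content of Lemma~3.3 of \cite{J-M} that the paper invokes, and your proposal does not supply it. A secondary caveat: without a convexity assumption $P$ need not lie on one side of the line through $e$, so $\tilde P$ is not automatically a planar domain and Friedlander cannot be applied directly; that too would need to be addressed.
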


\begin{proof}
The first statement is a well-known consequence of 
Courant's nodal theorem and Polya's inequality.\footnote{See, for example, 
Theorem 5.2 \cite{J-M}.} 
The second statement follows from Lemma 3.3 \cite{J-M} and 
Theorem 2.5 in \cite{Cheng}.
\end{proof}

\begin{prop} 
\label{prop:index-at-least-minus-one}
Let $p$ be a critical point $p$ of a second 
Neumann eigenfunction $u$ that is not a vertex.
Then the index $\ind(u,p)$ equals either $1,0$, or $-1$.
\end{prop}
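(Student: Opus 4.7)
I argue by contradiction: suppose $\ind(u,p) = 1-m$ with $m\ge 3$.

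By Proposition~\ref{prop:simple-arc}, the critical point $p$ does not lie on the nodal arc $\Zcal(u)$, so $u(p)\ne 0$; replacing $u$ by $-u$ if necessary, I may assume $u(p)>0$, hence $p\in \overline{P_+}$ where $P_+:=\{u>0\}$. The same proposition yields that $\overline{P_+}$ is a topological closed disk, since the two endpoints of the nodal arc lie in distinct sides of $\partial P$.

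Under the hypothesis $\ind(u,p)\le -2$, the local analysis of $u-u(p)$ (by Bers's description of solutions of elliptic equations at an interior critical point, or by the Bessel expansion \eqref{eqn:Bessel-expansion-II} from Section~\ref{sec:sector} at a boundary point) produces $n\ge 6$ arcs of $L:=\{u=u(p)\}$ emanating from $p$ if $p$ is interior, or $n\ge 3$ arcs if $p$ lies on a side; in either case these form an alternating pattern of $m$ positive and $m$ negative local sectors.

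The key technical step is a Hessian positivity argument driven by $\Delta u=-\mu u<0$ throughout $P_+$ (where $u>0$ and the second Neumann eigenvalue $\mu$ is positive). At any interior critical point in $P_+$, the Hessian has strictly negative trace, so it cannot be positive semidefinite; consequently $u$ has \emph{no} interior local minima in $P_+$. At a boundary critical point $q$ on the Neumann portion of $\partial \overline{P_+}$ where $\partial_s u(q)=0$, the identity $\partial_{nn}^2 u(q)=-\mu u(q)-\partial_{ss}^2 u(q)$ forces a boundary local minimum of $u|_{\partial P}$ to be a saddle of $u$ with $\ind(u,q)=-1$, while a boundary local maximum is either an honest local maximum of $u$ (index $+1$) or a saddle (index $-1$), depending on the sign of $\partial_{ss}^2 u$.

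I then apply the sub- and super-level-set Morse form of Proposition~\ref{prop:index-thm} on $\overline{P_+}$ (justified by tracking $\chi(\{u\ge s\})$ as $s$ decreases from $\max u$ to $0^+$, using that $\overline{P_+}$ is a disk and that by Proposition~\ref{prop:simple-arc} no critical point of $u$ lies on $N$): the total index sum over critical points of $u$ in $\overline{P_+}$ equals $\chi(\overline{P_+})=1$. Writing $I_M,I_S$ for the numbers of interior local maxima and saddles of $u$ in $P_+$, and decomposing the boundary contribution into $M_+$ boundary maxes of index $+1$, $M_-$ boundary maxes of index $-1$, and $M-1$ boundary minima (each of index $-1$), the identity becomes
\[
(1-m)+(I_M-I_S)+(M_+ - M_-)-(M-1)\;=\;1,
\]
which rearranges to $I_M-I_S=m-1+2M_-\ge m-1\ge 2$.

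The final step is to derive a contradiction from the inequality $I_M-I_S\ge 2$ by invoking a uniqueness property for the interior critical points of the restricted function $u|_{\overline{P_+}}$. Because $u$ is positive on $P_+$ and vanishes on the Dirichlet portion $N$ of $\partial \overline{P_+}$, $u|_{\overline{P_+}}$ is the principal (first) eigenfunction of the mixed Dirichlet--Neumann problem on $\overline{P_+}$ with eigenvalue $\mu$; classical maximum-principle and doubling arguments show that such a positive first eigenfunction on a simply connected planar domain admits no interior saddle point and at most one interior local maximum, so $I_M\le 1$, $I_S=0$, and $I_M-I_S\le 1$. This contradicts $I_M-I_S\ge 2$, completing the proof.

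\paragraph{Main obstacle.} The principal obstacle is the last step: the bound $I_M-I_S\le 1$ for the first eigenfunction of a mixed Dirichlet--Neumann problem on a simply connected planar domain. For convex $\overline{P_+}$ this follows from Brascamp--Lieb/Korevaar concavity results, but $\overline{P_+}$ is in general non-convex, being cut out from $P$ by an arbitrary simple arc. I expect the paper's actual argument to replace this step either by a refined doubling that reduces to a first Dirichlet eigenfunction on the closed double $D\overline{P_+}$, or by a more delicate combinatorial analysis that directly tracks the $2m$ arcs of $L$ through $\overline{P_+}$ (using that they cannot terminate on $N$ because $u\ne u(p)$ there) and extracts the needed extra constraint from the Hessian classification of boundary critical points.
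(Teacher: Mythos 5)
Your proposal takes a genuinely different, global route from the paper's, and unfortunately it does not close: you acknowledge yourself that the bound $I_M - I_S \le 1$ for the first eigenfunction of the induced mixed Dirichlet--Neumann problem on $\overline{P_+}$ is unproved in the non-convex case, and that is exactly where the argument breaks. Without that estimate, the Morse count you set up on $\overline{P_+}$ only yields $I_M - I_S \ge m - 1$ and produces no contradiction.

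The paper's actual proof is local and far shorter, and it uses an observation that you were close to but did not exploit. Lift $u$ to $\tu$ on the double $DP$ (so $\tp$ becomes an interior point), and note, as you did, that $\tu(\tp) \ne 0$ by Proposition \ref{prop:simple-arc}. If $\ind(u,p) < -1$, then more than four arcs of $\tu^{-1}(\tu(\tp))$ emanate from $\tp$. But a nonzero homogeneous quadratic polynomial in two variables has at most four nodal rays through the origin, so having more than four level arcs forces the degree-two homogeneous part $h_2$ of the Taylor expansion of $\tu - \tu(\tp)$ to vanish identically. On the other hand, the eigenvalue equation $\Delta \tu = -\mu \tu$ read at order zero gives $\Delta h_2 = -\mu\, \tu(\tp)$, which is a nonzero constant since $\tu(\tp) \ne 0$ and $\mu > 0$ — contradicting $h_2 \equiv 0$. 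You had both ingredients (the trace identity for the Hessian in the guise of $\Delta u = -\mu u$, and the arc count forced by the index), but routed them through a global Morse-theoretic argument that requires a hot-spots-type input you cannot supply; putting them together locally at $\tp$ is all that is needed.
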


\begin{proof}
Let $\tu$ be the lift of $u$ to the double $DP$,
and let $\tp \in DP$ correspond to $p$.  If $\ind(u,p) < -1$,
then more than four arcs in $\tu^{-1}(\tu(\tp))$ 
emanate from $\tp$. 
It follows that, in the natural coordinates at $\tp$, 
we have $\tu(z)-\tu(\tp)= o(|z-\tp|^2)$.
In particular, the degree two homogeneous polynomial $h_2$ 
consisting  of second order terms in the Taylor expansion of $\tu$
at $p$ vanishes indentically. But $(\Delta h_2)(\tp) = \mu \cdot \tu(\tp)$
and so $u(\tp)=0$. 
This contradicts Proposition \ref{prop:simple-arc}.
\end{proof}

Next we consider the possible indices of a critical point 
of $u$ that lies at a vertex of $P$. Let $c_k$ be 
the coefficient in the Bessel expansion (\ref{eqn:Bessel-expansion-II})
at a point $p \in \partial M$.
The following should be compared with Corollary 5.3 in \cite{J-M}.

\begin{prop} 
\label{two:coeff:zero}
Let $p \in \partial P$. 
Either $c_0 \neq 0$ or $c_1 \neq 0$.
\end{prop}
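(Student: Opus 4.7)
The plan is to argue by contradiction: suppose both $c_0 = 0$ and $c_1 = 0$ at some $p \in \partial P$. Let $\beta$ denote the interior angle at $p$ (with $\beta = \pi$ if $p$ lies in the interior of a side) and set $\nu = \pi/\beta$. Let $k \geq 2$ be the smallest positive integer with $c_k \neq 0$. First I would observe that because $c_0 = 0$ and $g_0(0) \neq 0$, the Bessel expansion (\ref{eqn:Bessel-expansion-II}) at $p$ gives $u(p) = 0$, so $p \in \Zcal(u) \cap \partial P$.

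Next I would use (\ref{eqn:Bessel-expansion-II}) to count the nodal branches of $u$ emanating from $p$. The hypothesis $c_0 = c_1 = 0$ and the minimality of $k$ give
\[
u\left( r e^{i\theta} \right) ~=~ c_k \, g_{k\nu}(0) \, r^{k\nu} \cos(k\nu\theta) ~+~ o\!\left(r^{k\nu}\right)
\]
in a neighborhood of $p$. The function $\cos(k\nu\theta)$ has exactly $k$ simple zeros $\theta_1 < \cdots < \theta_k$ in $(0,\beta)$, at each of which $\sin(k\nu\theta_j) \neq 0$, so $\partial_\theta u$ is non-degenerate transversely. An implicit function theorem argument entirely parallel to the proof of Lemma \ref{lem:nodal-sector} then produces $k$ distinct smooth arcs of $\Zcal(u)$ emanating from $p$ into the interior of $P$, one in each direction $\theta_j$.

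Finally I would invoke Proposition \ref{prop:simple-arc}: the nodal set $\Zcal(u)$ is a simple arc whose intersection with $\partial P$ consists only of its two endpoints. Thus $p$ must be one of these two endpoints. But at an endpoint of a simple arc there is exactly one branch emanating, whereas we have just produced $k \geq 2$. This contradiction finishes the proof.

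The one subtlety requiring care is to ensure that the $k$ nodal arcs we produce really are branches of the nodal set in $P$ (rather than running along $\partial P$), and that they are genuinely distinct from one another. For the first point, a nodal arc cannot run along $\partial P$: if $u$ vanished on a sub-arc of a side, then real-analytic continuation via Neumann reflection would force $u \equiv 0$. For the second, the $\theta_j$ are already separated by the explicit leading-order formula, and the implicit function theorem produces a unique smooth branch in each angular sector, just as in Lemma \ref{lem:nodal-sector}. These verifications are routine and should not obstruct the argument.
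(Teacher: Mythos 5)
Your proof is correct and is essentially the same argument as the paper's, which simply observes that if $c_0=c_1=0$ then inspection of the Bessel expansion shows at least two nodal arcs emanate from $p$, contradicting Proposition \ref{prop:simple-arc}; you have merely spelled out the implicit-function-theorem details that the paper leaves implicit.
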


\begin{proof}
If $c_0=0=c_1$, then by 
inspecting (\ref{eqn:Bessel-expansion-II})
one finds that at least two nodal arcs emanate from $p$.
This contradicts Proposition \ref{prop:simple-arc}.
\end{proof}

\begin{coro}  
\label{coro:1-connected-index}
Let $v$ be a vertex whose angle $\beta$ is not a multiple of $\pi/2$.

\begin{enumerate}
\item If $c_0 = 0$, then $\ind(u,v)=0$.  
\item If $\beta < \pi/2$, then $c_0 \neq 0$ if and only if $v$ is a local extremum.
\item If $\beta < \pi$ and $c_1=0$, then $v$ is a local extremum.

\item If $\pi/2<\beta < \pi$, then $c_1=0$ if and only if 
$v$ is a local extremum.
\item If $\beta< \pi$, then $\ind(u,v)=0$ or $\ind(u,v)=1$.

\end{enumerate}

\end{coro}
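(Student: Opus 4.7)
The plan is to deduce all five assertions from the previously established Lemma \ref{lem:vertex-index-1}, Proposition \ref{prop:simple-arc}, Proposition \ref{two:coeff:zero}, Proposition \ref{prop:local-ext-index-1}, and Corollary \ref{coro:c_1-zero-nonzero-index}. The central observation is that evaluating (\ref{eqn:Bessel-expansion-II}) at $r=0$ gives $u(v) = c_0 \cdot g_0(0)$, and since $g_0(0) \neq 0$, one has $c_0 = 0$ if and only if $u(v) = 0$. Part (1) is then immediate: if $c_0 = 0$ then $v$ lies on the nodal set $\Zcal(u)$, which by Proposition \ref{prop:simple-arc} contains no critical points of $u$; hence $v$ is not a critical point and $\ind(u,v) = 0$.

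For part (2), let $k$ be the smallest positive integer with $c_k \neq 0$, which exists by Proposition \ref{two:coeff:zero}. If $c_0 \neq 0$ then $u(v) \neq 0$ and $k \geq 1$; since $\beta < \pi/2 \leq k \pi/2$ and $\beta$ is not a multiple of $\pi/2$, the inequality is strict, so Lemma \ref{lem:vertex-index-1}(ii) gives $\ind(u,v) = 1$, and $v$ is a local extremum by Proposition \ref{prop:local-ext-index-1}. Conversely, if $v$ is a local extremum then $\ind(u,v) = 1 \neq 0$, and part (1) forces $c_0 \neq 0$. For part (3), if $\beta < \pi$ and $c_1 = 0$, then Proposition \ref{two:coeff:zero} gives $c_0 \neq 0$, so $u(v) \neq 0$ and $k \geq 2$; hence $\beta < \pi \leq k \pi/2$ strictly, and Lemma \ref{lem:vertex-index-1}(ii) again yields $\ind(u,v) = 1$, so $v$ is a local extremum.

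Part (4) follows by combining part (3) with its converse: if $v$ is a local extremum with $\pi/2 < \beta < \pi$, then $\ind(u,v) = 1 \neq 0$, and Corollary \ref{coro:c_1-zero-nonzero-index}(2) gives $c_1 = 0$. Part (5) is then a case analysis on the pair $(c_0, c_1)$: if $c_0 = 0$, part (1) gives index $0$; if $c_0 \neq 0$ and $c_1 = 0$, part (3) together with Proposition \ref{prop:local-ext-index-1} gives index $1$; and if both coefficients are nonzero, then $k=1$ and $u(v) \neq 0$, so Lemma \ref{lem:vertex-index-1}(ii) yields index $1$ when $\beta < \pi/2$ while Lemma \ref{lem:vertex-index-1}(i) yields index $0$ when $\pi/2 < \beta < \pi$. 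In every case the index lies in $\{0,1\}$.

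I do not anticipate any genuine obstacle: the difficult analytical work has already been carried out in Lemma \ref{lem:vertex-index-1} and Proposition \ref{two:coeff:zero}, and what remains is essentially a bookkeeping exercise. The only point requiring care is to verify in each case that the hypothesis ``$\beta$ is not a multiple of $\pi/2$'' is precisely what excludes the borderline case $\beta = k \pi/2$ of Lemma \ref{lem:vertex-index-1}(iii) and permits the clean dichotomy between parts (i) and (ii).
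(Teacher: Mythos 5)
Your proof is correct and follows essentially the same route as the paper: both hinge on Lemma \ref{lem:vertex-index-1} together with Proposition \ref{two:coeff:zero} to pin down $k$, $u(v)$, and hence the index, with the ``not a multiple of $\pi/2$'' hypothesis excluding the ambiguous case (iii) of the lemma. The only small deviations are that for part (1) you invoke Proposition \ref{prop:simple-arc} where the paper applies Lemma \ref{lem:vertex-index-1}(i) directly (both give $\ind(u,v)=0$), and for the converse in part (4) you route through Corollary \ref{coro:c_1-zero-nonzero-index} rather than Lemma \ref{lem:vertex-index-1}(i); these are equivalent since that corollary is itself a restatement of the lemma.
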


\begin{proof}

Let $k$ be the smallest positive integer such that $c_k \neq 0$.
If $c_0=0$, then by Proposition \ref{two:coeff:zero},
we have $c_1 \neq 0$, and hence by Lemma \ref{lem:vertex-index-1}
we have $\ind(u,v)=0$.

If $\beta < \pi/2$ and $c_0 \neq 0$, then we are in case (2)
of Lemma \ref{lem:vertex-index-1}, and hence $\ind(u,v)=1$. 
By Proposition 
\ref{prop:local-ext-index-1} we have $\ind(u,v)=1$ if and only if 
$v$ is a local extremum. 

Suppose that $\pi/2<\beta < \pi$. 
If $c_1 =0$, then Proposition \ref{two:coeff:zero} implies that $c_0 \neq 0$, 
and so part (ii) of Lemma \ref{lem:vertex-index-1} implies that $\ind(u,v)=1$.
If $c_1 \neq 0$, then part (i) of  Lemma \ref{lem:vertex-index-1}
implies that $\ind(u,v) = 0$.
\end{proof}

\begin{coro} \label{coro:c_0-vanish-L_eu-arc}
Suppose that $v$ is an acute vertex of $P$ contained in the side $e$. 
If $v$ is not a 
local extremum, then $\Zcal(L_{e} u)$ has an arc that ends at $v$.
\end{coro}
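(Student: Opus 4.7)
The plan is to chain together three earlier results: the dichotomy for the index at an acute vertex (Corollary \ref{coro:1-connected-index}), the nonvanishing of at least one of the first two Bessel coefficients at a boundary point (Proposition \ref{two:coeff:zero}), and the characterization of nodal arcs of $\Zcal(Lu)$ ending at a vertex in terms of the leading Bessel coefficient (Corollary \ref{coro:arc-iff-leading-vanish}). Since everything has already been set up, the argument should be essentially a direct deduction.

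First, I would note that, because $v$ is acute, its angle $\beta$ satisfies $0 < \beta < \pi/2$; in particular $\beta \neq \pi/2$, so Corollary \ref{coro:arc-iff-leading-vanish} is applicable at $v$, and the leading Bessel coefficient at $v$ is $c_0$ by definition. Next I would invoke Corollary \ref{coro:1-connected-index}(2), which states that for an angle $\beta < \pi/2$, the condition $c_0 \neq 0$ is equivalent to $v$ being a local extremum of $u$. By hypothesis $v$ is not a local extremum, hence $c_0 = 0$.

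Finally, I would apply Corollary \ref{coro:arc-iff-leading-vanish} to the side $e$: the constant vector field $L_e$ is parallel to one of the boundary rays at $v$ (namely, $e$), and we have just shown that the leading coefficient $c_0$ vanishes, so the corollary yields an arc of $\Zcal(L_e u)$ ending at $v$.

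There is essentially no obstacle here; the work is entirely packaged into the preceding corollaries. The one point worth double-checking when writing this out is that Corollary \ref{coro:arc-iff-leading-vanish} is indeed stated for the simply-connected setting we are in (it is, via the Bessel expansion) and that the hypothesis $\beta \neq \pi/2$ is genuinely met by acuteness. Note that Proposition \ref{two:coeff:zero} is not strictly needed for the conclusion, but it is worth remarking that together with $c_0 = 0$ it forces $c_1 \neq 0$, so the alternative description in Lemma \ref{lem:nodal-sector}\ref{part-a} applies and confirms that precisely one such arc ends at $v$.
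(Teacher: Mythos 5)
Your argument is correct and matches the paper's proof exactly: the paper also derives this directly from Corollary \ref{coro:1-connected-index} and Corollary \ref{coro:arc-iff-leading-vanish}. The extra remark about Proposition \ref{two:coeff:zero} and uniqueness of the arc is accurate but, as you note, not needed for the statement.
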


\begin{proof}
This follows from Corollary \ref{coro:arc-iff-leading-vanish} 
and Corollary \ref{coro:1-connected-index}. 
\end{proof}

\section{No hot spots on certain polygons with two acute vertices}

\label{sec:eigen-obtuse}

Ba\~{n}uelos and Burdzy \cite{B-B} used probabilistic methods
to show that the second Neumann eigenfunction $u$ of an obtuse triangle 
has no interior critical points. In \cite{J-M} \cite{Erratum}, we 
used a variational approach to show that the two acute vertices are 
the only critical points of $u$ and hence they are the global extrema of $u$. 
In this section, we extend this latter result to a large class of
$n$-gons that have two acute vertices. At the end of the section 
we identify this class of polygons as those that satisfy the Lip-1 
condition of \cite{A-B} and which have no orthogonal sides.

\begin{lem} \label{lem:lens-four-or-arc}
Let $u$ be a second Neumann eigenfunction on a simply connected
polygon $P$ with at least one acute vertex.
If $u$ has an interior critical point, then either 
$u$ has at least four nonzero index critical points
or there is a side $e$ such that $\Zcal(L_{e} u)$ 
has an arc that ends at a vertex of $P$. 
\end{lem}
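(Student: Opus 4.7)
I would argue by contrapositive: assume $u$ has an interior critical point $p_*$, that $u$ has at most three nonzero-index critical points (negating (a)), and that for every side $e$ of $P$, no arc of $\Zcal(L_e u)$ ends at any vertex (negating (b)). The goal is to derive a contradiction.

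The first step is to pin down the vertex indices under $\neg$(b). For each vertex $v$ of angle $\beta$ not a multiple of $\pi/2$, and for each side $e$ adjacent to $v$, Corollary \ref{coro:arc-iff-leading-vanish} forces the leading Bessel coefficient of $u$ at $v$ to be nonzero. Corollary \ref{coro:1-connected-index} then makes each acute vertex a local extremum (index $+1$) and each non-acute vertex index $0$.

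Next, let $a$ denote the number of acute vertices. Applying the Poincar\'e-Hopf formula (Proposition \ref{prop:index-thm}) with $\chi(P)=1$ yields
\[
2 \;=\; 2I + B + a,
\]
where $I$ and $B$ are the sums of indices of interior and of non-vertex boundary critical points. Proposition \ref{prop:index-at-least-minus-one} bounds interior indices in $\{-1,0,1\}$, and a reflection argument across the side gives the same bound for non-vertex boundary indices. Combined with the upper bound $N\leq 3$ coming from $\neg$(a), a case analysis on $a$ eliminates $a\geq 3$ outright and reduces $a\in\{1,2\}$ to a short list of configurations; crucially, in every surviving configuration there exists at least one critical point of index $0$.

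The final step is to locate a non-vertex boundary critical point of index $0$ and apply Lemma \ref{lem:paralle-two-degree-1-vertices} to it on its side $e$: the lemma yields at least two degree-$1$ vertices of $\Zcal(L_e u)$ in $\partial P\setminus e$; by $\neg$(b) none of them is a vertex of $P$, so Proposition \ref{prop:degree-1-stable} makes each a nonzero-index critical point of $u$, forcing $N\geq 4$ and contradicting $\neg$(a). The main obstacle is the configuration in which the only index-$0$ critical points are interior (so Lemma \ref{lem:paralle-two-degree-1-vertices} does not apply directly). To dispatch this case one chooses a generic side $e$ whose direction $L_e$ is not a null direction of the rank-one Hessian at any interior index-$0$ critical point; then $\Zcal(L_e u)$ is smooth, its arcs cannot end at vertices by $\neg$(b) nor at non-existent nonzero-index boundary critical points, so $\Zcal(L_e u)$ must consist of closed loops in the interior. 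Tracing the arc of $\Zcal(L_e u)$ that emanates from the single admissible boundary endpoint (if any) together with Lemma \ref{lem:two-stable-on-boundary} and the simple-arc structure of the nodal set (Proposition \ref{prop:simple-arc}) produces the final contradiction.
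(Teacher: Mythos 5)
Your opening step matches the paper: under the negation of alternative (b), Corollary~\ref{coro:arc-iff-leading-vanish} forces the leading Bessel coefficient to be nonzero at every vertex, so Corollary~\ref{coro:1-connected-index} makes every acute vertex an index~$+1$ local extremum and Corollary~\ref{coro:c_1-zero-nonzero-index} makes every non-acute vertex index~$0$. After that you and the paper part ways, and the route you take has a genuine gap.

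Your plan rests on the claim that ``in every surviving configuration there exists at least one critical point of index~$0$'' and that you can ultimately locate such a point on a side so as to apply Lemma~\ref{lem:paralle-two-degree-1-vertices}. That is not forced by the Poincar\'e--Hopf bookkeeping alone. For instance, with $a=1$ acute vertex, one may in principle have an interior critical point of index $+1$ and a single non-vertex boundary critical point of index $-1$: then $2I+B+a = 2\cdot 1 + (-1) + 1 = 2$, all indices are in $\{-1,0,1\}$, and $N=3$. Nothing in the case analysis produces an index-$0$ critical point, let alone one lying on a side, so the intended application of Lemma~\ref{lem:paralle-two-degree-1-vertices} never gets off the ground. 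Your fallback (``choose a generic side $e$\ldots so $\Zcal(L_e u)$ must consist of closed loops'') tries to paper over this, but in the problematic configuration there \emph{is} a non-vertex nonzero-index boundary critical point available to serve as a degree-$1$ vertex of $\Zcal(L_e u)$, so you do not get the ``no endpoints at all'' situation that a closed-loop contradiction would require.

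The key mechanism you are missing is much more direct and makes the case analysis unnecessary. The interior critical point $p_*$ itself lies on $\Zcal(L_e u)$ for \emph{every} side $e$, and the component of $\Zcal(L_e u)$ through $p_*$ is a tree whose degree-$1$ vertices lie on $\partial P$ (the tree structure is the result of the earlier paper invoked again in Proposition~\ref{interior_crit}). A tree meeting the interior has at least two boundary leaves. Under the negation of (b) none of these leaves is a vertex of $P$, so by Proposition~\ref{prop:degree-1-stable} each is a nonzero-index critical point. Together with the acute vertex this already gives at least three nonzero-index boundary critical points; since the non-acute vertices have index $0$ and every boundary critical point has index in $\{-1,0,1\}$ (Proposition~\ref{prop:index-at-least-minus-one}), the Poincar\'e--Hopf identity forces the count of nonzero-index boundary critical points to be even, hence at least four. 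Your parity observation is the same one the paper uses, but you are invoking it \emph{before} you have the easy count of three, which is why you are driven into an unjustified case analysis and a vague fallback.
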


\begin{proof}
Suppose that for each side $e$, the nodal set $\Zcal(L_e u)$ 
does not have an arc that ends at a vertex. 
Thus, if $v$ is a vertex and $e$ is a 
side containing $v$, then
Corollary \ref{coro:arc-iff-leading-vanish} implies
that the leading Bessel 
coefficient of $u$ at $v$ is nonzero.
In particular, each acute vertex has index $+1$
by Corollary \ref{coro:1-connected-index}, and each 
obtuse vertex is not a critical point by Proposition
\ref{coro:c_1-zero-nonzero-index}.

Since $u$ has a critical point in the interior of $P$,
for any side $e$,
the nodal set $\Zcal(L_e u)$ has at least two degree 1 
vertices in $\partial P$. Since $\Zcal(L_e u)$ 
does not have an arc that ends at a vertex of $P$, 
each of the degree one vertices of $\Zcal(L_e u)$ is a
non-vertex point on $\partial P$. By 
Proposition \ref{prop:degree-1-stable}, each of 
these degree 1 vertices is a nonzero index critical point of $u$.

Thus, since $P$ has at least one acute vertex, 
$u$ has at least three nonzero index critical points in $\partial P$. 
Since the obtuse vertices are not critical points, 
Proposition \ref{prop:index-at-least-minus-one} implies 
that each critical point of $u$ that lies in $\partial P$ 
has index equal to $1$, $0$ or $-1$. Thus, it follows 
from Proposition \ref{prop:index-thm} that the 
number of nonzero index critical points of $u$ that lies on $\partial P$
is even. In particular, $u$ has at least four nonzero index 
critical points on $\partial P$.
\end{proof}

In the following we consider paths $P_t$ of polygons with $n$ vertices.
We do not allow for two vertices to collide.
But we do allow for the angle of a vertex to become equal to $\pi$. 
For example, a triangle $T$ may be regarded as a quadrilateral
if we declare that a point on some side of $T$ is a vertex 
with angle $\pi$.

Let $u_t$ be a path of second Neumann eigenfunctions associated to 
the path $P_t$. For each $t$, let $V(t)$ denote the number of vertices $v$
of $P_t$ with angle not equal to $\pi$ such that there exists a side 
$e$ of $P_t$ so that an arc in $\Zcal(L_e u_t)$ ends at $v$.
Let $S(t)$ denote the number of nonzero index critical points of $u_t$.

\begin{lem} \label{lem:NUT-open-closed}
Suppose that $P_t$ is a path of $n$-gons such that 
no two sides of $P_t$ are orthogonal and $P_t$ has 
exactly two acute vertices for each $t \in [0,1]$. 
Let $u_t$ be an associated path of eigenfunctions. 
If $S(0) \geq 3$ or $V(0) \geq 1$, then either $S(1) \geq 3$
or $V(1) \geq 1$.
\end{lem}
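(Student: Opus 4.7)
The plan is to show that the set $A := \{t \in [0,1] : S(t) \geq 3 \text{ or } V(t) \geq 1\}$ is both open and closed in $[0,1]$. Since $0 \in A$ by hypothesis and $[0,1]$ is connected, this forces $A = [0,1]$, and in particular $1 \in A$.

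For openness, suppose $t_0 \in A$. The case $S(t_0) \geq 3$ is immediate from Lemma \ref{lem:top-stable-implies-stable}, which gives $S(t) \geq S(t_0) \geq 3$ for $t$ near $t_0$. The subtler case is $V(t_0) \geq 1$ but $S(t_0) < 3$. Fix a vertex $v$ of $P_{t_0}$ with angle $\beta \neq \pi$ and a side $e$ such that an arc of $\Zcal(L_e u_{t_0})$ terminates at $v$; the no-orthogonal-sides hypothesis ensures $\beta \neq \pi/2$, and Corollary \ref{coro:arc-iff-leading-vanish} tells us the leading Bessel coefficient of $u_{t_0}$ at $v$ vanishes. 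For $t$ near $t_0$ the corresponding data $v_t, e_t$ deform continuously; either the leading coefficient of $u_t$ at $v_t$ remains zero (so $V(t) \geq 1$), or it becomes nonzero. In the latter case, by continuity of the nodal set $\Zcal(L_{e_t} u_t)$ together with Corollary \ref{coro:Lu-arc-perturb}, the arc must terminate near $v_t$ at a non-vertex boundary point $p_t$, and Proposition \ref{prop:degree-1-stable} identifies $p_t$ as a nonzero-index critical point of $u_t$; exploiting Corollary \ref{coro:1-connected-index} to characterize the local form of $u_t$ at $p_t$ (which is of index zero in the relevant acute case), Lemma \ref{lem:paralle-two-degree-1-vertices} produces two further degree-$1$ vertices of $\Zcal(L_{e_t} u_t)$ on the remaining boundary, each of which is a nonzero-index critical point by Proposition \ref{prop:degree-1-stable}, yielding $S(t) \geq 3$.

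For closedness, let $t_n \to t_0$ with $t_n \in A$; after extraction, either $V(t_n) \geq 1$ for all $n$ or $S(t_n) \geq 3$ for all $n$. In the $V$-case, pick $v_n, e_n$ witnessing membership; a further subsequence gives $v_n \to v$ a vertex of $P_{t_0}$ and $e_n \to e$ a side. Continuity of the Bessel coefficients under the convergence $u_{t_n} \to u_{t_0}$, valid because $\beta_n \to \beta \neq \pi/2$ by the no-orthogonality hypothesis, together with Corollary \ref{coro:arc-iff-leading-vanish}, transfers the vanishing of the leading coefficient from $(u_{t_n}, v_n)$ to $(u_{t_0}, v)$. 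If $\beta \neq \pi$ this directly gives $V(t_0) \geq 1$; otherwise $v$ becomes a smooth boundary point of $P_{t_0}$ and the limiting nodal arc exhibits $v$ as a degree-$1$ vertex of $\Zcal(L_e u_{t_0})$, which by Proposition \ref{prop:degree-1-stable} is a nonzero-index critical point, and Lemma \ref{lem:paralle-two-degree-1-vertices} then produces the additional nonzero-index critical points needed for $S(t_0) \geq 3$. In the $S$-case, the nonzero-index critical points of $u_{t_n}$ subsequentially converge to limits in $\oP_{t_0}$. If any limit coincides with a vertex $v$ of $P_{t_0}$ of angle not equal to $\pi$, the convergence results of Section \ref{sec:crit-pts-converge-vertex} (Lemmas \ref{lem:convergence-to-vertex-coefficient-to-zero}, \ref{lem:simult-conv-crit-pts}, and \ref{lem:degenerate-converge-vertex}) force a leading Bessel coefficient of $u_{t_0}$ at $v$ to vanish; combined with Proposition \ref{two:coeff:zero} and Corollary \ref{coro:arc-iff-leading-vanish}, this yields $V(t_0) \geq 1$. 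If no limit point is a vertex, stability of total index (Proposition \ref{prop:local-number-of-critical-points}) gives $S(t_0) \geq 3$.

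The main obstacle is the transfer phenomenon: as $t$ varies through a value where the leading Bessel coefficient at a vertex passes through zero, an arc may detach from that vertex (decreasing $V$) while nonzero-index critical points emerge nearby (increasing $S$). Making these compensations quantitative, so that the disjunction $S \geq 3 \lor V \geq 1$ is preserved rather than narrowly violated, is the technical heart of the argument and is what forces the invocation of the local normal form in Section \ref{sec:zero-index}, the refined index dichotomy of Corollary \ref{coro:1-connected-index}, and the two-degree-$1$-vertices mechanism of Lemma \ref{lem:paralle-two-degree-1-vertices}.
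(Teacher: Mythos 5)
Your overall strategy (show $A := \{t : S(t) \geq 3 \text{ or } V(t) \geq 1\}$ is open and closed) matches the paper's, but several steps in your execution do not hold up, and the key mechanism is missing.

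\textbf{Openness.} You assert that if $V(t_0) \geq 1$ via a vertex $v$ and a side $e$, then Corollary \ref{coro:arc-iff-leading-vanish} forces the leading Bessel coefficient of $u_{t_0}$ at $v$ to vanish. But that corollary requires $L$ to be parallel to one of the boundary rays of the sector at $v$, i.e., $e$ must be adjacent to $v$. In the definition of $V(t)$, $e$ is allowed to be any side; if $e$ is not adjacent to $v$, the arc of $\Zcal(L_e u_{t_0})$ can end at $v$ with nonzero leading coefficient (Lemma \ref{lem:nodal-sector} parts \ref{part-a}--\ref{part-d}), and that is exactly the easy case the paper dispatches first: nonvanishing of the leading coefficient is open, the inclusion $\psi \in (\pi/2, \pi/2 + \beta)$ (or the analogous open interval) persists because no two sides are orthogonal, so $V(s) \geq 1$ for $s$ near $t_0$. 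Your subsequent branch is internally inconsistent: you invoke Proposition \ref{prop:degree-1-stable} to conclude $p_t$ has nonzero index, then immediately assert $p_t$ ``is of index zero in the relevant acute case'' in order to apply Lemma \ref{lem:paralle-two-degree-1-vertices}; these cannot both hold, and Corollary \ref{coro:1-connected-index} applies only to vertices of $P$, not to boundary points in the interior of a side. The paper's actual openness argument in the vanishing-coefficient case reduces to $v$ acute (the obtuse case already yields $S(t) \geq 3$ via Lemma \ref{lem:vertex-index-1}), observes that $c_0(t) = 0$ at an acute $v$ forces $\ind(u_{t},v)=0$, invokes Lemma \ref{lem:two-stable-on-boundary} to obtain two other nonzero-index critical points (the global extrema), stabilizes those via Lemma \ref{lem:top-stable-implies-stable}, and then distinguishes $c_0(s) = 0$ (giving $V(s) \geq 1$ by Corollary \ref{coro:c_0-vanish-L_eu-arc}) from $c_0(s) \neq 0$ (giving $\ind(u_s,v) \neq 0$, hence $S(s) \geq 3$). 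None of that structure is present in your argument.

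\textbf{Closedness.} The fatal flaw is the final sentence: ``If no limit point is a vertex, stability of total index (Proposition \ref{prop:local-number-of-critical-points}) gives $S(t_0) \geq 3$.'' Stability of the total index does \emph{not} preserve the count of nonzero-index critical points: a $+1$ and a $-1$ can merge into a single index-$0$ critical point, keeping the local total index fixed while dropping $S$ by two. This is precisely the phenomenon the paper must (and does) guard against. The correct handling, as in the paper, is: if $u_{t_0}$ has an interior critical point, apply Lemma \ref{lem:lens-four-or-arc}; otherwise the two global extrema $p^{\pm}$ are nonzero-index boundary critical points (Lemma \ref{lem:two-stable-on-boundary}), one may assume the acute vertices are $p^{\pm}$ (else $V(t_0) \geq 1$ via Corollary \ref{coro:c_0-vanish-L_eu-arc}), and then extra critical points $p_n$ of $u_{t_n}$ either converge to a vertex (handled by Lemma \ref{lem:convergence-to-vertex-coefficient-to-zero} and Corollaries \ref{coro:c_1-zero-nonzero-index}, \ref{coro:c_0-vanish-L_eu-arc}) or converge to an index-zero critical point $p$ in the interior of a side, at which point Lemma \ref{lem:paralle-two-degree-1-vertices} together with Proposition \ref{prop:degree-1-stable} manufactures the additional nonzero-index critical points (or a vertex with an incoming arc) needed to conclude $S(t_0) \geq 3$ or $V(t_0) \geq 1$. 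Your proof identifies several of the right ingredients but does not assemble them correctly, and the erroneous invocation of index stability to bound $S(t_0)$ from below is a genuine gap.
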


\begin{proof}
It suffices to show that the set, $A$, of $t \in [0,1)$ such that either 
$S(t) \geq 3$ or $V(t) \geq 1$ is both open and closed in $[0,1)$. 

\vspace{.2cm}

($A$ is open) If  $S(t) \geq 3$, then Lemma \ref{lem:top-stable-implies-stable}
implies that there exists $\epsilon>0$ such that if $|s-t|< \epsilon$,
then $S(s) \geq 3$. 
Hence, to prove openness, it suffices to assume that $V(t) \geq 1$, and  show that there exists $\epsilon>0$ so that if 
$|s-t|< \epsilon$ then either $V(s) \geq 1$ or $S(s) \geq 3$.

If $V(t) \geq 1$, then there exists vertex $v$ of $P_t$, 
a side $e$ of $P_t$, and an arc in $\Zcal(L_e u)$ that ends at  $v$. 

If the leading coefficient at $v$ is nonzero then 
for $s$ near $t$ the leading coefficient at the corresponding vertex 
is also nonzero. Since no two sides of $P_t$ are orthogonal 
and since the corresponding edge and sector
vary continuously in $s$, we find from Lemma \ref{lem:nodal-sector}
that $V(s) \geq 1$ for each $s$ near $t$. 
Therefore, we may assume that there exists a vertex $v$ such that  the leading coefficient of $u_t$ at $v$ equals zero.

We may assume without loss of generality that $v$ is an acute vertex. 
Indeed,  if $v$  were obtuse with $c_1(t)=0$, 
then by Lemma \ref{lem:vertex-index-1} $v$ would be a critical 
point with nonzero index. If $c_0(t)$ were not to vanish at each of the two acute 
vertices then Lemma \ref{lem:vertex-index-1} would imply that
each of these vertices have index equal to one.

Hence, $S(t) \geq 3$,
and so $S(s) \geq 3$ for $s$ near $t$ by Lemma
\ref{lem:top-stable-implies-stable}.
Thus, we may assume that $v$ is acute.

Suppose that $c_0(t)=0$ at an acute vertex $v$. Corollary 
\ref{coro:1-connected-index} implies that $\ind(u,v)=0$.
By Proposition \ref{lem:two-stable-on-boundary},
the eigenfunction $u_t$ has at least two nonzero index critical 
points. Thus, it follows from Lemma \ref{lem:top-stable-implies-stable}
that there exists $\epsilon>0$ such that if $|s-t|<\epsilon$,
then there exist two nonzero index critical points 
of $u_s$ that are distinct from $v$.
Suppose that $0 <|s-t| < \epsilon$. If $c_0(s) \neq 0$, 
then, since $v$ is acute,  
Corollary \ref{coro:1-connected-index} implies that 
$\ind(u_s,v) \neq 0$, and hence $S(s) \geq 3$. 
On the other hand, if $c_0(s) = 0$,
then Corollary \ref{coro:c_0-vanish-L_eu-arc} implies that $\Zcal(L_e u_s)$ 
has an arc 
that ends at $v$ where $e$ is a side adjacent to $v$.
In sum, if $|s-t|< \epsilon$, then either $S(s) \geq 3$ or  
$V(s) \geq 1$.

\vspace{.2cm}

($A$ is closed) By assumption, for each $t \in [0,1)$, no two sides of 
$P_t$ are orthogonal, and so the set of $t$ such that $V(t) \geq 1$ is 
closed by Lemma \ref{lem:nodal-sector}. Suppose that $S(t_n) \geq 3$ 
with $t_n \to t$. To prove that $A$ is closed
it suffices to show that either $S(t) \geq 3$ or $V(t) \geq 1$. 

If the eigenfunction $u_t$ has an interior critical point,
then Lemma \ref{lem:lens-four-or-arc} implies that $S(t) \geq 4$
or $V(t) \geq 1$. 
Thus, we may assume that $u_t$ has no interior critical points.
By Proposition \ref{lem:two-stable-on-boundary}, 
the two index 1 critical points, $p^+$ and $p^-$ lie in $\partial P$.
Suppose that there exists a third critical point $p$.
If the index of $p$ is nonzero, then $S(t)\geq 3$. 
Thus, in the following we assume that $\ind(u,p) = 0$.

If some acute vertex $v$ has is not a local extremum, then by 
 Corollary \ref{coro:c_0-vanish-L_eu-arc} an arc of $\Zcal(L_e v)$  ends at $v$,
and so $V(t) \geq 1$. Thus, we may assume that each acute 
vertex is a local extremum. If there are three local extrema,
then $S(t)\geq 3$.
Hence we may assume that the 
acute vertices correspond to the the index 1 critical points $p^+$ and $p^-$. 

Because $S(t_n) \geq 3$, for each $n$ there exists a critical point $p_n$ on a side that is distinct from $p^+$ and $p^-$. Suppose that $p_n$ converges to a 
vertex $v$ of $P_t$ whose angle does not equal $\pi$. 
Then, by Lemma \ref{lem:convergence-to-vertex-coefficient-to-zero}
the leading coefficient---$c_0(t)$ if $v$ is 
acute and $c_1(t)$ if $v$ is obtuse---equals zero. 
If $v$ is obtuse then Corollary \ref{coro:c_1-zero-nonzero-index} implies that $v$ is a 
nonzero index  critical point
and so $S(t) \geq 3$. If $v$ is acute, then by  
Corollary \ref{coro:c_0-vanish-L_eu-arc} we have $V(t) \geq 1$.   

Therefore, we may assume that $p_n$ converges to a critical point $p$ 
of $u_t$ that lies in the interior of a side $e$. Since $p \neq p^{\pm}$,
the critical point has index equal to zero. Thus, by Lemma 
\ref{lem:paralle-two-degree-1-vertices}, the graph $\Zcal(L_e u_t)$ 
intersects the interior of $P_t$ and has at least two degree 1 vertices. 
If one of these degree $1$ vertices equals a vertex of $P_t$ then 
$V(t) \ge 1$. If a degree one vertex lies in the interior of a side
then it is a nonzero index  critical point by Proposition
\ref{prop:degree-1-stable}, and hence $S(t) \geq 3$ since the acute 
vertices $p^{\pm}$ are also nonzero index  critical points. 
\end{proof}

\begin{thm} 
\label{thm:only-acute}
Suppose that $P_t$ is a path of $n$-gons such that 
no two sides of $P_t$ are orthogonal. 
If $P_1$ is an obtuse triangle, then each 
second Neumann eigenfunction of $P_0$ has exactly two
critical points, a global maximum 
at one acute vertex and a global minimum at the other acute
vertex. Moreover, the second Neumann eigenspace of $P_0$ 
is one-dimensional.
\end{thm}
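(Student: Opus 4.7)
The plan is to apply Lemma \ref{lem:NUT-open-closed} in its contrapositive form, reducing the problem to verifying $S(1)\leq 2$ and $V(1)=0$ on the obtuse triangle $P_1$. The bound $S(1)\leq 2$ follows from the results of Ba\~nuelos--Burdzy and \cite{J-M}: the second Neumann eigenfunction on an obtuse triangle has exactly two critical points, both index-$1$ extrema at the two acute vertices. For $V(1)=0$ one performs a case analysis using Lemma \ref{lem:nodal-sector}: at the obtuse vertex, placed with adjacent sides along $\theta=0$ and $\theta=\beta_A$ with $\beta_A\in(\pi/2,\pi)$, the bad range $[\beta_A-\pi/2,\pi/2]\mod\pi$ lies strictly inside $(0,\pi/2)$, while the adjacent sides correspond to $\psi\in\{0,\beta_A\}$ and the opposite side has direction in $(\pi/2,\pi)$ by the geometry of obtuse triangles---so none of the three falls in the bad range. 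A similar check at each acute vertex (using $c_0\neq 0$ there and part (a) of Lemma \ref{lem:nodal-sector}) completes the verification, and the contrapositive of the lemma yields $S(0)\leq 2$ and $V(0)=0$.

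From $V(0)=0$ I deduce that the leading Bessel coefficient $c_0$ is nonzero at each acute vertex $v_\pm$ of $P_0$: were $c_0=0$ at such a vertex, Corollary \ref{coro:c_0-vanish-L_eu-arc} would produce an arc of $\Zcal(L_e u_0)$ ending there, contradicting $V(0)=0$. Part (2) of Corollary \ref{coro:1-connected-index} then identifies each $v_\pm$ as a local extremum with index $1$, so $S(0)\geq 2$ and therefore $S(0)=2$. To rule out further critical points $p$ of $u_0$, which by the paper's convention must have index zero and cannot be vertices, I argue by cases: if $p$ lies in the interior of $P_0$, Lemma \ref{lem:lens-four-or-arc} forces $S(0)\geq 4$ or $V(0)\geq 1$, both contradicting the bounds; if $p$ lies on the interior of some side $e$, Lemma \ref{lem:paralle-two-degree-1-vertices} produces at least two degree-$1$ vertices of $\Zcal(L_e u_0)$ in $\partial P_0\setminus e$, and Proposition \ref{prop:degree-1-stable} identifies each as either a vertex of $P_0$ or a nonzero-index critical point distinct from $v_\pm$. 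Lemma \ref{lem:nodal-sector}(c) together with the no-orthogonal-sides hypothesis precludes such a degree-$1$ vertex from sitting at an angle-$\pi$ vertex, so each alternative contradicts $V(0)=0$ or $S(0)=2$. Thus $u_0$ has exactly the two critical points $v_\pm$, and Lemma \ref{lem:two-stable-on-boundary} combined with Proposition \ref{prop:local-ext-index-1} identifies $v_\pm$ as the global extrema of $u_0$.

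For the one-dimensionality of the second Neumann eigenspace, suppose for contradiction that $u_0'$ is a second eigenfunction on $P_0$ linearly independent from $u_0$. A small perturbation of the path $P_t$ that preserves the theorem's hypotheses and splits the second eigenvalue yields a second branch of eigenfunctions converging to $u_0'$, to which the preceding analysis applies in the limit, forcing $u_0'$ also to have its global extrema at $v_\pm$. After scaling so both $u_0$ and $u_0'$ attain their maxima at $v_+$, the combination $v := u_0 - (u_0(v_+)/u_0'(v_+))\, u_0'$ is a nontrivial second eigenfunction with $v(v_+)=0$ whose global extrema remain at $v_\pm$; hence $v\leq 0$ on $P_0$, and the orthogonality $\int_{P_0} v\,dA=0$ of a second Neumann eigenfunction to the constants then forces $v\equiv 0$, a contradiction. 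The main obstacles I anticipate are the verification $V(1)=0$ on the obtuse triangle, where careful case analysis of all side directions against the bad ranges at each vertex is required, and the perturbation/limit argument used to extend the single-path conclusions to every element of the eigenspace.
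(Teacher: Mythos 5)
Your argument for the first part of the theorem (that each second Neumann eigenfunction on $P_0$ has exactly two critical points, both global extrema at the acute vertices) is essentially the paper's argument and is correct; in fact you are somewhat more careful than the paper in a few places, e.g.\ you explicitly invoke Lemma~\ref{lem:lens-four-or-arc} to rule out interior critical points, which the paper leaves implicit. Your verification that $V(1)=0$ via direct inspection of the angle ranges in Lemma~\ref{lem:nodal-sector} is a perfectly valid substitute for the paper's slightly more conceptual check (nonzero leading coefficients plus geometry of obtuse triangles).

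The one-dimensionality argument, however, contains a genuine gap. You introduce a second eigenfunction $u_0'$ and argue that one can perturb the path so that a ``second branch of eigenfunctions'' converges to $u_0'$. This step is not justified: when a double eigenvalue is split by a perturbation, the two branches converge to \emph{some} orthonormal pair in the eigenspace, selected by the perturbation matrix in Kato's theory, and there is no reason one of them converges to a prescribed $u_0'$. You then also assert that the linear combination $v=u_0-(u_0(v_+)/u_0'(v_+))u_0'$ has its ``global extrema remain at $v_\pm$''; but this does not follow from knowing it for $u_0$ and $u_0'$ individually --- you need the first part of the theorem applied to $v$ itself. The paper avoids both issues by invoking the method of Lemma~12.2 of \cite{J-M}, which modifies the path $P_t$ so that there is a continuous family $u_t$ connecting \emph{any} given second Neumann eigenfunction $u_0$ of $P_0$ to $u_1$; thus the first part of the theorem automatically applies to every element of the second eigenspace, including $v$. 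Once you know $v$ has its global max and min at the acute vertices and that $v(v_+)=0$, the conclusion $v\equiv 0$ follows exactly as you wrote (sign-definiteness plus $\int v=0$). So the fix is not to perturb the path, but to observe that the path technique already covers any initial eigenfunction, and then apply the conclusion directly to the combination $v$.
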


\begin{proof}
By  the method of
Lemma 12.2 of \cite{J-M}, one may modify the path $P_t$ 
so that there exists a continuous family of second Neumann 
eigenfunctions  $u_t$ connecting any $u_0$ to any $u_1$.
If $u_1$ is a second Neumann eigenfunction for an obtuse triangle $P_1$, 
then by \cite{J-M} \cite{Erratum}, the acute vertices are the only 
critical points of $u_1$, and in particular each is a global extremum.
Thus Proposition \ref{prop:local-ext-index-1} and 
Corollary \ref{coro:1-connected-index} imply that 
the coefficient $c_0$ of $u_1$ at each acute vertex
is nonzero. Given an acute vertex $v$, the angle between 
opposite side and one of the sides adjacent to $v$ is greater 
than $\pi/2$. Hence it follows from Lemma \ref{lem:nodal-sector} that
for each side $e$ of $P_1$
there does not exist an arc in $\Zcal(L_e u)$ that ends at 
an acute vertex. The obtuse vertex is not a local extremum
and hence $c_1$ of $u_1$ at this vertex is nonzero. Thus, it follows from 
Lemma \ref{lem:nodal-sector} that for each side $e$ of $P_1$, 
no arc of $\Zcal(L_e u_1)$ ends at the obtuse vertex. 
In sum, $S(1) =2$  and $V(1)=0$.

Thus,  Lemma \ref{lem:NUT-open-closed} implies that
$S(0) = 2$ and $V(0)=0$.  In particular,  $u_0$ has exactly 
two nonzero index critical points and these 
are necessarily the global extrema of $u_0$. 
Each global extremum must be an acute vertex.
Indeed if an acute vertex $v$ of $P$ were not a local extremum, 
then by Corollary \ref{coro:c_0-vanish-L_eu-arc}
we would have that $\Zcal(L_e v)$ has an arc that ends at $v$
where $e$ is a side adjacent to $v$, contradicting $V(0)=0$.

Suppose that there exists a critical point $p$ of $u_0$ 
that were distinct from the acute vertices. Then $p$  has index zero 
and lies in a side of $P_0$.  
Thus, $p$ lies in the interior of a side $e$ of $P$, and hence by Lemma
\ref{lem:paralle-two-degree-1-vertices}, the graph $\Zcal(L_e u_t)$ 
intersects the interior of $P_t$ and has at least two degree 1 vertices.
If a degree 1 vertex $p$ lies in the interior of a side, 
then $\ind(u,p)\neq 0$ by Proposition \ref{prop:degree-1-stable},
a contradiction. Therefore, the acute vertices are the only
critical points of $u_0$.

Finally, we show that the second Neumann eigenspace of $P_0$
is one-dimensional. Let $u_+$ and $u_-$ be second Neumann eigenfunctions 
of $P_0$ and let $v$
be an acute vertex. Then there exist $a_+, a_- \in \Rbb$ so that 
$a_+ \cdot u_+(v) + a_- \cdot u_-(v)=0$. We claim that 
$u^*:=a_+ \cdot u_+ + a_- \cdot u_- \equiv 0$. Indeed, if not then 
$u^*$ would be a second Neumann eigenfunction and in particular 
would be orthogonal to the constant functions. 
Thus both the the maximum value and the minimum value of $u$
would be nonzero. But by Theorem \ref{thm:only-acute}, the acute vertex
$v$ is a global extremum of $u^*$ and hence we have a contradiction. 
\end{proof}

We now show that the set of polygons that satisfy 
the hypotheses of Theorem \ref{thm:only-acute}
is the interior of the set of polygons that satisfy
the Lip-1 condition of \cite{A-B}. First we recall,
the notion of Lip-K domain. 
Let $f_{+}:[-b,b] \to \Rbb$ and $f_-:[-b,b] \to \Rbb$ 
be a pair of Lipschitz functions such that 
\begin{itemize}

\item $f_+(\pm b)= f_{-}(\pm b)$, 

\item $f_-(x) < f_+(x)$ for $x \in (-b,b)$, and

\item  the Lipschitz constant of $f_{\pm}$ is at most $K$. 
\end{itemize}
The domain 
$\{(x,y)\, :\,  f_-(x) < y < f_+(x)\}$ is called a {\em Lip-$K$ domain}.

Recall that if $\Omega$ is a domain with Lipschitz boundary 
$\partial \Omega$ then the outward unit normal vector $\nu(p)$
is defined for almost every $p \in \partial \Omega$. 

\begin{prop}
\label{prop:lip}
A simply connected Lipschitz domain $\Omega$ 
is isometric to a Lip-1 domain if and only if 
there exists a partition of $\partial \Omega$ into 
two connected sets $\Gamma^+$ and $\Gamma^-$ 
so that if $p, p' \in \Gamma^{\pm}$ then $\nu(p) \cdot \nu(p') \geq 0$
and if $p \in \Gamma^+$ and $q \in \Gamma^-$ then 
$\nu(p) \cdot \nu(q) \leq 0$.
\end{prop}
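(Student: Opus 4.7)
The plan is to handle the two directions of the equivalence separately. For the forward direction, suppose $\Omega = \{(x,y) \,:\, f_-(x) < y < f_+(x)\}$ is a Lip-$1$ domain, and take $\Gamma^+$ to be the upper graph and $\Gamma^-$ the lower graph. At points of differentiability, the outward unit normal on $\Gamma^+$ is $(-f_+', 1)/\sqrt{1+(f_+')^2}$ and on $\Gamma^-$ it is $(f_-', -1)/\sqrt{1+(f_-')^2}$. Since $|f_\pm'| \leq 1$ almost everywhere, the $y$-component of $\nu$ is at least $1/\sqrt{2}$ on $\Gamma^+$ and at most $-1/\sqrt{2}$ on $\Gamma^-$, and a direct calculation then verifies the three sign conditions on pairwise inner products.

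For the reverse direction, let $N^\pm \subset S^1$ denote the essential images of $\nu$ on $\Gamma^\pm$. The hypotheses say that every pair in $N^+$ and every pair in $N^-$ has nonnegative inner product, while any $n^+ \in N^+$ and $n^- \in N^-$ satisfy $n^+ \cdot n^- \leq 0$, equivalently $n^+ \cdot (-n^-) \geq 0$. Thus the union $N' := N^+ \cup (-N^-)$ has all pairwise inner products nonnegative. The key \emph{geometric lemma} is then: any closed nonempty $N \subset S^1$ with pairwise nonnegative inner products lies in a closed arc of length at most $\pi/2$. Indeed, fixing any $n_0 \in N$ places all of $N$ in the closed semicircle centered at $n_0$, so the minimal closed arc $A \supset N$ has length at most $\pi$; its endpoints lie in $N$, their angular distance equals $|A|$, and the pairwise hypothesis applied to them forces $|A| \leq \pi/2$.

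Applying the lemma to the closure of $N'$ (the pairwise hypothesis passes to the closure by continuity of the inner product), let $w \in S^1$ be the midpoint of the resulting arc. Then $\nu \cdot w \geq 1/\sqrt{2}$ for every $\nu \in N^+$ and $\nu \cdot w \leq -1/\sqrt{2}$ for every $\nu \in N^-$. Apply the isometry sending $w$ to $e_2$. On the rotated $\Gamma^+$ the outward normal has $y$-component at least $1/\sqrt{2}$, so its tangent has $x$-component bounded below by $1/\sqrt{2}$ in absolute value; hence $\Gamma^+$ is the graph of a Lipschitz function $f_+$ over the $x$-interval joining its two endpoints, with $|f_+'| = |\nu_1|/\nu_2 \leq 1$ almost everywhere, and similarly $\Gamma^-$ is the graph of a Lip-$1$ function $f_-$. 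Since $\Gamma^+$ and $\Gamma^-$ share the two transition points of the partition, the graphs are defined over a common interval $[-b, b]$ and agree at $\pm b$; simple connectivity of $\Omega$ together with the fact that each interior vertical line meets $\partial \Omega$ in precisely one point of each graph then forces $f_- < f_+$ on $(-b, b)$ and $\Omega = \{(x,y) \,:\, f_-(x) < y < f_+(x)\}$, a Lip-$1$ domain. The crux of the whole argument is the trick of passing from the two-sided hypothesis to the single set $N'$, after which the geometric lemma and the choice of $w$ do the rest; the remaining verifications, including the passage to closure of the essential image, are routine.
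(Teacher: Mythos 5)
Your forward direction is essentially the paper's: identify $\Gamma^{\pm}$ with the two graphs and use $|f'_{\pm}|\le 1$ to bound the normals' $y$-components (above $1/\sqrt2$ on $\Gamma^+$, below $-1/\sqrt2$ on $\Gamma^-$), from which the three sign conditions follow. The reverse direction is where you genuinely diverge from the paper. You fold the two-sided hypothesis into a single set $N' = N^+ \cup (-N^-) \subset S^1$ with all pairwise inner products nonnegative, prove the geometric lemma that any closed nonempty subset of $S^1$ with that property lies in a closed arc of length at most $\pi/2$, and take $w$ to be the midpoint of that arc; the bound $\nu\cdot w \ge 1/\sqrt2$ on $N^+$ and $\nu\cdot w \le -1/\sqrt2$ on $N^-$ is then immediate. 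The paper instead chooses sequences $p_n^{\pm}\in\Gamma^{\pm}$ with $\nu(p_n^+)\cdot\nu(p_n^-)$ tending to $\sup\{\nu(p)\cdot\nu(q): p\in\Gamma^+,\ q\in\Gamma^-\}$, sets $w$ to be a limit of $(\nu(p_n^+)-\nu(p_n^-))/|\nu(p_n^+)-\nu(p_n^-)|$, and asserts that a computation gives the same $1/\sqrt2$ bounds. Your route has a real advantage: the constant $1/\sqrt2$ is transparently the cosine of $\pi/4$, half the arc length from the lemma, and the bound holds for every element of $N'$ at once. By contrast, with the paper's normalized-difference $w$, the naive two-term estimate $\nu(p)\cdot w \ge (0-s)/\sqrt{2-2s}$ (with $s$ the supremum) only gives a bound of at most $1/2$, and one can check directly that the $1/\sqrt2$ bound fails verbatim for, say, a $30$-$60$-$90$ triangle with $\Gamma^+$ the two legs and $\Gamma^-$ the hypotenuse; so the paper's "computation" requires more than is written, whereas your arc-lemma midpoint handles all cases uniformly. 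The tail of your argument (rotating $w$ to $e_2$, reading off that each of $\Gamma^{\pm}$ is the graph of a Lip-$1$ function from the normal bound, gluing the graphs over a common interval $[-b,b]$, and concluding $f_-<f_+$ with $\Omega$ the region between the graphs) matches the paper. This is a correct proof, and in the one place where the statement has real content it is cleaner and more fully justified than the paper's.
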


\begin{proof}
$(\Rightarrow)$
After applying an isometry, we may suppose that 
$\Omega$  is bounded by the graphs of the Lip-1 functions
$f_+$ and $f_-$ as above. Let $\Gamma^+$ be the graph of $f^+$
and let $\Gamma^-$ be the graph of $f^-$. Suppose that 
$\nu(p)= (x,y)$.  Since $f^+$ is Lip-1 we have that
$p \in \Gamma^+$ implies that $y > |x|$, and since $f^-$ 
is Lip-1 we have that  $p \in \Gamma^-$ implies that $y < -|x|$. 
It follows that if $p, p' \in \Gamma^{\pm}$ then 
$\nu(p) \cdot \nu(p') \geq 0$
and if $p \in \Gamma^+$ and $q \in \Gamma^-$ then 
$\nu(p) \cdot \nu(q) \leq 0$.

$(\Leftarrow)$
Let $p_n^+ \in \Gamma^+$ and $p_n^- \in \Gamma^-$ be sequences
such that $\lim_{n \to \infty} \nu(p_n^+) \cdot \nu(p_n^-)$
equals the supremum of 
$\{ \nu(p) \cdot \nu(q)\, :\, p \in \Gamma^+,\, q \in \Gamma^-\}$.
Let $w$ be a limit point of the sequence 
$\left(\nu(p_n^+) - \nu(p_n^{-} \right)/|\nu(p_n^+) - \nu(p_n^{-})|$.
A computation shows that for each $p \in \Gamma^+$
we have $\nu(p) \cdot w \geq 1/\sqrt{2}$ and for each 
$p \in \Gamma^-$ we have $\nu(p) \cdot w \leq -1/\sqrt{2}$.
Choose coordinates in the plane so that the vector $w$ is the vector $(0,1)$.
Then for each $p \in \Gamma$ we have $\nu(p)= (x,y)$ where $y \geq |x|$.
From this it follows that $\Gamma^+$ is the graph of a Lip-1 function 
$f_+:[a_+,b_+] \to \Rbb$.
Similarly, $\Gamma^-$ is the graph of a Lip-1 function 
$f_-:[a_-,b_-] \to \Rbb$..
Because $\Gamma^+$ and $\Gamma^-$ form a partition of $\partial \Omega$
we have $f_{+}(a_+)= f_-(a_-)$ and $f_{+}(b_+)= f_-(b_-)$.
Because $\nu(p)$ is the outward normal vector for a domain 
we have $f_+> f_-$.
\end{proof}

\begin{coro}
\label{coro:obtuse-Lip}
A triangle $T$ is a Lip-1 domain if and only if $T$ is not an acute triangle.
\end{coro}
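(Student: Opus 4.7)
The plan is to combine Proposition \ref{prop:lip} with elementary plane geometry. The outward normal $\nu$ is constant on each edge of $T$, so if $\{\Gamma^+,\Gamma^-\}$ is a partition of $\partial T$ satisfying the dot-product conditions of Proposition \ref{prop:lip}, then no edge can be split between $\Gamma^+$ and $\Gamma^-$: if an edge contributed points to both, then $\nu(p)\cdot\nu(q)=1$ for some $p\in\Gamma^+$ and $q\in\Gamma^-$, violating the cross condition. Hence $\Gamma^+$ and $\Gamma^-$ are each unions of whole edges, and since $\partial T$ is a $3$-edge topological circle, connectedness forces the split to be of the form (one edge) versus (the two remaining consecutive edges).

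Next I would record the basic fact that if two edges meet at a vertex with interior angle $\theta$, their outward unit normals $n,n'$ satisfy $n\cdot n' = -\cos(\theta)$. This follows at once from writing each outward normal as a $-\pi/2$ rotation of the corresponding edge tangent, and noting that along a counterclockwise traversal of $\partial T$ consecutive tangents differ by rotation through the exterior angle $\pi-\theta$.

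Label the angles of $T$ by $\alpha,\beta,\gamma$ and let $e_\alpha,e_\beta,e_\gamma$ denote the opposite edges. Then the conditions of Proposition \ref{prop:lip} for the partition $\Gamma^+=e_\alpha$, $\Gamma^-=e_\beta\cup e_\gamma$ become, using the previous paragraph applied at each vertex,
\[
-\cos(\alpha)\;\geq\;0,\qquad -\cos(\beta)\;\leq\;0,\qquad -\cos(\gamma)\;\leq\;0,
\]
the first being the within-group condition at the vertex where $e_\beta,e_\gamma$ meet and the others the cross conditions at the two remaining vertices. These three inequalities are equivalent to $\alpha\geq\pi/2$ with $\beta,\gamma\leq\pi/2$, i.e.\ to $\alpha$ being the (unique) non-acute angle of $T$.

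Putting the pieces together, a valid Lip-$1$ partition for $T$ exists for exactly one choice of distinguished edge when $T$ has a non-acute angle, and no such partition exists when every angle of $T$ is strictly less than $\pi/2$. By Proposition \ref{prop:lip} this is exactly the statement that $T$ is isometric to a Lip-$1$ domain if and only if $T$ is not acute. There is no genuine obstacle in this argument; the only delicate point is the reduction to an edge-by-edge partition, and this is handled by the observation in the first paragraph that constancy of $\nu$ on each edge rules out splits interior to an edge.
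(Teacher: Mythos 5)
Your proof is correct and follows the same approach as the paper, which simply notes that the interior angle between two sides is acute if and only if the corresponding outward normals have negative dot product and then invokes Proposition \ref{prop:lip}. You supply the details left implicit there — the reduction to an edge-by-edge partition, the identity $n\cdot n'=-\cos\theta$, and the explicit case analysis — but the underlying argument is the same.
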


\begin{proof}
Let $e_1$, $e_2$, $e_3$ be the sides of the 
triangle and let $\nu_1$, $\nu_2$ and $\nu_3$ be the associated 
outward normal vectors. The angle between $e_i$ and $e_j$ is acute
if and only if $\nu_i \cdot \nu_j <0$. The claim follows from 
Proposition \ref{prop:lip}.
\end{proof}

\begin{prop}
Suppose that $P_t$ is a path of polygons such that no two
sides of $P_t$ are orthogonal and $P_0$ is isometric to a Lip-1 domain. 
Then each $P_t$ is also isometric to a Lip-1 domain. 
\end{prop}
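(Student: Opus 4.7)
The plan is to show that the set $A = \{t \in [0,1] : P_t \text{ is isometric to a Lip-1 domain}\}$ is both open and closed in $[0,1]$; since $0 \in A$ by hypothesis, this yields $A = [0,1]$. The main tool is the normal-cone characterization in Proposition \ref{prop:lip}: $P_t$ is isometric to a Lip-1 domain iff $\partial P_t$ admits a partition into two connected sets $\Gamma^+, \Gamma^-$ with $\nu(p)\cdot\nu(p') \geq 0$ whenever $p, p'$ lie in the same piece and $\nu(p)\cdot\nu(q) \leq 0$ whenever they lie in opposite pieces.

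I first observe that for a polygon the normal is constant along each side, so a transition between $\Gamma^+$ and $\Gamma^-$ cannot occur in the interior of a side (it would force $\nu \cdot \nu \leq 0$, impossible). Hence the partition is determined by a choice of two vertices at which the arc switches, and each $\Gamma^{\pm}$ is a union of consecutive sides. In particular, there are only finitely many combinatorial types of admissible partitions of the boundary of an $n$-gon.

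For openness: suppose $t \in A$ with partition determined by such a pair of transition vertices. Because no two sides of $P_t$ are orthogonal, each dot product $\nu_i \cdot \nu_j$ is either strictly positive (same group) or strictly negative (opposite groups). The vertices of $P_s$ vary continuously in $s$ and do not collide, so the normals $\nu_i(s)$ depend continuously on $s$ and the same combinatorial assignment of sides to $\Gamma^{\pm}$ makes sense for $s$ near $t$. By continuity the strict inequalities persist on a neighborhood of $t$, so $P_s \in A$ there.

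For closedness: suppose $t_n \in A$ with $t_n \to t$. Each $P_{t_n}$ carries a partition, and since the number of combinatorial partitions is finite we may pass to a subsequence on which the assignment of sides to $\Gamma^{\pm}$ is the same. The normals $\nu_i(t_n)$ converge to $\nu_i(t)$, and the non-strict inequalities $\geq 0$ and $\leq 0$ are preserved in the limit. The limiting decomposition is still into two connected arcs (a combinatorial property), so by Proposition \ref{prop:lip}, $P_t \in A$. The main obstacle is purely bookkeeping: one must check that both $\Gamma^{\pm}$ remain nonempty at the limit (which follows because vertices do not collide and the polygon stays bounded), and verify that the admissible cases in which a vertex angle equals $\pi$—when two consecutive sides share the same normal—do not interfere with any of the inequalities, which is clear since $\nu \cdot \nu = 1 \geq 0$ regardless of the group assignment.
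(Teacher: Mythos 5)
Your proof is correct and uses essentially the same strategy as the paper: both apply the normal-cone characterization (Proposition \ref{prop:lip}), observe that the no-orthogonality hypothesis forces all the relevant dot products $\nu_i\cdot\nu_j$ to be \emph{strictly} positive or negative, and conclude by continuity of $t\mapsto\nu_i(t)$ that the sign pattern (hence the Lip-1 partition) persists along the path. The paper compresses this into a direct persistence argument with the partition fixed at $t=0$, whereas you unpack it as an explicit open-closed decomposition of $\{t : P_t \text{ is Lip-1}\}$, but the content is the same.
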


Note that we are allowing for the possibility that some vertices have angle $\pi$ for some $t$.

\begin{proof}
Since $P_0$ is a Lip-1 domain, there exists a partition  
$\{\Gamma^+,\, \Gamma^-\}$ of $\partial P_0$ that satisfies the 
criteria of Proposition \ref{prop:lip}.  In particular, 
$\Gamma_{+}$ is the union of sides 
with outward unit normal vectors $\nu_1^{+}(0), \ldots, \nu_j^{+}(0)$,
the set $\Gamma_{-}$ is the union of sides 
with outward unit normal vectors $\nu_1^{-}(0), \ldots, \nu_k^{-}(0)$,
and these normal vectors satisfy $\nu_i^{\pm}(0) \cdot \nu_j^{\pm}(0) \geq 0$ 
and $\nu_i^+(0) \cdot \nu_j^-(0) \leq 0$. 
Since no two sides of $P_0$ are orthogonal, each inequality is strict.
The quantities  $\nu_i^{\pm}(t) \cdot \nu_j^{\pm}(t)$ and 
$\nu_i^+(t) \cdot \nu_j^-(t)$ depend continuously in $t$ and
cannot vanish since no two sides of $P_t$ are orthogonal. 
Thus the inequalities persist for all $t$, and thus each $P_t$
is a Lip-1 domain by Proposition \ref{prop:lip}. 
\end{proof}

\begin{prop}
\label{prop:lip-admissible}
If $P$ is a Lip-1 polygonal domain with no two sides orthogonal, 
then there exists a path $P_t$ of polygons with no two sides 
orthogonal such that $P_1=P$ and $P_0$ is an obtuse triangle.  
\end{prop}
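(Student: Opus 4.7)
The plan is to construct an explicit path $\{P_s\}_{s\in[0,1]}$ from $P$ at $s=1$ to an obtuse triangle $T$ at $s=0$ by linearly interpolating the $y$-coordinates of vertices while holding the $x$-coordinates fixed. After applying an isometry, I may assume $P$ is bounded between the graphs of Lip-1 functions $f_\pm:[0,L]\to\Rbb$ with $f_\pm(0)=0$ and $f_\pm(L)=h_0$. Set $v_a=(0,0)$, $v_b=(L,h_0)$, and observe that $|h_0|<L$ strictly---otherwise the Lip-1 condition would force $f_+\equiv f_-$ to be the linear segment from $v_a$ to $v_b$, contradicting non-degeneracy of $P$. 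Let $L_0(x):=h_0 x/L$ parametrize the chord $\overline{v_a v_b}$.

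For the target, if $\Gamma^-$ consists of a single edge I first insert a degenerate vertex of angle $\pi$ on it, so that $\Gamma^-$ has at least one interior vertex. I then pick such a vertex with $x$-coordinate $x_c\in(0,L)$, fix a small parameter $\epsilon>0$, and set $v_c=(x_c,L_0(x_c)-\epsilon)$. A direct computation verifies that for all sufficiently small $\epsilon>0$, the triangle $T:=v_a v_b v_c$ has its obtuse angle at $v_c$, and that its three slopes $h_0/L$, $h_0/L-\epsilon/x_c$, and $h_0/L+\epsilon/(L-x_c)$ all lie strictly in $(-1,1)$. Let $V:[0,L]\to\Rbb$ denote the piecewise-linear V-shape function whose graph is $\overline{v_a v_c}\cup\overline{v_c v_b}$.

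For $s\in[0,1]$, the path $P_s$ will be the $n$-gon whose $i$-th vertex is located at $(x_i,\,s y_i^P+(1-s)\tau_i)$, where $\tau_i=L_0(x_i)$ for vertices on $\Gamma^+$, $\tau_i=V(x_i)$ for vertices on $\Gamma^-$, and $\tau_i=y_i^P$ for the corners $v_a,v_b$. Then $P_1=P$ and $P_0=T$ (viewed as an $n$-gon with $n-3$ degenerate vertices on the sides of $T$). The interpolated upper and lower boundary functions $U_s=s f_+ + (1-s)L_0$ and $D_s=s f_- + (1-s)V$ satisfy $U_s>D_s$ on $(0,L)$ throughout the deformation, as a sum of two non-negative functions at least one of which is strictly positive. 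Hence each $P_s$ is a simple polygon.

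The main step is to verify that no two sides of $P_s$ are orthogonal for any $s$. Each edge slope of $P_s$ is a convex combination of a slope of $P$ (in $[-1,1]$) and the corresponding slope of $T$ (in $(-1,1)$). A case-by-case inspection of outward normals shows that two sides of $P$ with slopes $+1$ and $-1$ would necessarily be orthogonal---whether both lie in $\Gamma^+$, both in $\Gamma^-$, or one in each---so the no-orthogonality hypothesis on $P$ forces one of the extremes $\pm 1$ to be missing among the slopes of $P$. Without loss of generality, no slope of $P$ equals $-1$; then every $P$-slope lies in $(-1,1]$, and combined with the slopes of $T$ being strictly in $(-1,1)$, every edge slope of $P_s$ lies in $(-1,1)$ as well. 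Since two lines with slopes $s_1,s_2\in(-1,1)$ can never satisfy $s_1 s_2=-1$, no two sides of $P_s$ are orthogonal. The hard part will be exactly this sign-bookkeeping of the extremal slopes of $P$, which depends crucially on the no-orthogonality hypothesis; the rest of the argument is a straightforward direct computation with convex combinations.
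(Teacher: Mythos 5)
Your proof is correct, but it takes a genuinely different route from the paper's. The paper argues by induction on the number of sides: it invokes Proposition \ref{prop:lip} to split $\partial P$ into $\Gamma^+$ and $\Gamma^-$, finds two adjacent sides in one of these with distinct normals, and slides the shared vertex linearly to the midpoint of the segment joining the other two endpoints, observing that the resulting outward normals are convex combinations of the original ones and hence the Lip-1 and no-orthogonality conditions persist; the polygon $P_1$ of the resulting path has an angle-$\pi$ vertex and so can be viewed as having one fewer side, and the inductive hypothesis supplies the rest of the path. You instead give a single global homotopy: fix the Lip-1 coordinates, pick a concrete target obtuse triangle $T$ (a shallow V below the chord) whose three slopes lie strictly in $(-1,1)$, and linearly interpolate the $y$-coordinates of the vertices of $P$ toward their images on $T$. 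The engine is the same in both --- edge slopes (respectively, normals) of the interpolated polygon are convex combinations of the endpoints' slopes (resp.\ normals) --- but you apply it once globally rather than one vertex-merge at a time. Your observation that no-orthogonality forbids having both a slope-$(+1)$ and a slope-$(-1)$ edge is the right place to use the hypothesis, and you are being slightly over-cautious in phrasing it as a ``case-by-case inspection of outward normals'': orthogonality of two sides depends only on the product of their slopes being $-1$, which for slopes in $[-1,1]$ forces $\{+1,-1\}$ with no cases to distinguish. Both approaches work; yours is more explicit and avoids the induction, at the cost of needing the $\epsilon$-small choice of $T$ and the $U_s > D_s$ verification (which you correctly note is a sum of two nonnegative terms, one strictly positive at each $s$), while the paper's is more modular and makes the reduction of side-count transparent. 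One small point worth stating explicitly is that inserting a degenerate (angle-$\pi$) vertex into $\Gamma^-$ does not change the set of edge slopes and so preserves no-orthogonality; the paper's framework (Section \ref{sec:eigen-obtuse}) explicitly allows such degenerate vertices in a path, so this move is legitimate.
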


\begin{proof}
We will argue via induction on the number, $n$, of sides of $P$.
If $n=3$, then the claim follows from \ref{coro:obtuse-Lip}.
Suppose that the claim is true if a Lip-1 polygon has $n$ sides
no two of which are orthogonal. Let $P$ be a Lip-1 polygon
with $n+1$ sides such that no two sides are othogonal.
Proposition \ref{prop:lip} implies that the sides of $P$ 
can be partitioned into sides  $e_1^+, \ldots, e_j^+$
and $e_1^-, \ldots, e_k^-$, so that the associated 
outward unit normal vectors $\nu_1^+, \ldots, \nu_j^+$ 
and $\nu_1^-, \ldots, \nu_k^-$ satisfy the inequalities 
$\nu_i^{\pm} \cdot \nu_j^{\pm} > 0$ and $\nu_i^+ \cdot \nu_j^- < 0$.
Because $P$ has nonempty interior, by relabeling if necessary,
we may assume that $\nu_1^+ \neq \nu_2^+$ and the sides 
$e_1^+$ and $e_2^+$ are adjacent. Let $v$ be the vertex 
shared by $e_1^+$ and $e_2^+$, and let $v'$ be the midpoint
of the segment that joins the other two vertices of 
$e_1^+$ and $e_2^+$. Define $P_t$ to be the polygon obtained from $P$ 
by replacing $v$ with $v_t = (1-t) \cdot v + t \cdot v'$.
A straightforward computation show that both $n_1^+(t)$
and $n_2^+(t)$ are convex combinations of $n_1^+$ and $n_2^+$,
and so it follows that $P_t$ is a Lip 1-polygon with no
orthogonal sides. The polygon $P_1$ may be regarded as
a Lip-1 polygon with only $n$ sides no two of which are orthogonal.
Thus, by the inductive hypothesis, 
we may concatenate the path $P_t$ with another path
to obtain the desired path to an obtuse triangle.
\end{proof}


\section{Instability via blocking}

\label{sec:blocking}

In this section we provide criteria---Proposition \ref{prop:blocking}---that
guarantee the existence of a quadrilateral with a second Neumann 
eigenfunction that has an unstable critical point.
In \S \ref{sec:break}, we will construct families of quadrilaterals
that meet the criteria under the assumption that these quadrilaterals
have no interior critical points. 

The statement and proof of Proposition \ref{prop:blocking} are
somewhat complicated, but the basic idea is simple:
Suppose that we have a continuous family of quadrilaterals 
$Q_t$ with an obtuse vertex $w_t$ and sides $ e^-_t$ and $e^+_t$
 adjacent to $w_t$. 
Suppose further that for the associated family of eigenfunctions
$u_t$, we know that $u_0$ (resp. $u_1$) has 
only one nonvertex critical point $p_0$ (resp. $p_1$),
that this critical point lies
on the side $e^-_0$ (resp. $e^+_1$), 
and that this critical point has index $-1$. 
One might naively expect that the index $-1$ critical point 
varies continuously in $t$, and therefore, for some time $t$,
the critical point lies at the obtuse vertex $w_t$. 
However, Lemma \ref{lem:interior-crit-pt-convergence} would 
then imply that $c_1=0$ at $w_t$, and then  Corollary
\ref{coro:1-connected-index} would imply that $w_t$ is an
index $+1$ critical point. Thus, the index of the critical 
point would abruptly change which is not possible by 
Proposition \ref{prop:local-number-of-critical-points}.
Roughly speaking, the obtuse vertex `blocks' the 
index $-1$ critical point. 

Under additional assumptions, we show that this 
`blocking phenomenon' implies the existence of 
an unstable critical point.

\begin{prop}
\label{prop:blocking}
Let $Q_t$ be a continuous family of quadrilaterals such that for each $t \in [0,1]$
the quadrilateral $Q_t$ has three acute vertices, and the angle of the 
fourth vertex, $w_t$, lies in $(\pi/2, \pi)$ for each $t \in (0,1)$. 
Let $e_t$ be a side of $Q_t$ that is adjacent 
to $w_t$ so that $t \mapsto e_t$ is continuous.
Let $u_t: Q_t \to \Rbb$ be a second Neumann eigenfunction, 
and suppose that $t \mapsto  u_t$ is continuous. Suppose that 
\begin{enumerate}

\item for each $t$, the eigenfunction $u_t$ has no interior critical points,

\item for each $t$, each nonzero index critical point 
      of $u_t$ either is a vertex or belongs to 
      a side adjacent to $w_t$,
     
\item for each $t$, each acute vertex of $Q_t$ is a local extremum of $u_t$,

\item $u_0$ has exactly one nonvertex critical point and it
      belongs to the interior of $e_0$.

\item $u_1$ has no critical points on $e_1$ except for the acute vertex.
\end{enumerate}
Then there exists $t \in (0,1)$ such that $u_t$ has an unstable critical point.
\end{prop}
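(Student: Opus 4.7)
The plan is to argue by contradiction: assume every critical point of every $u_t$ is stable under perturbation. First, I would pin down the endpoint configurations. By the Poincar\'e--Hopf formula of Proposition \ref{prop:index-thm}, Corollary \ref{coro:1-connected-index} (which forces each acute vertex to have index $+1$, as required by hypothesis (3) via Proposition \ref{prop:local-ext-index-1}), and Proposition \ref{prop:index-at-least-minus-one}, hypothesis (4) implies $\ind(u_0, p_0) = -1$ and $\ind(u_0, w_0) = 0$; similarly, hypothesis (5) forces the nonzero-index critical points of $u_1$ on sides adjacent to $w_1$ to lie entirely in $e_1'^\circ$ with net index $-1$.

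Next I would track $p_0$ forward. By Lemma \ref{lem:top-stable-implies-stable}, for $t$ in a right-neighborhood of $0$ the eigenfunction $u_t$ has index $-1$ critical points on sides adjacent to $w_t$. Let $t^* \in (0, 1]$ be the supremum of times for which $u_t$ has an index $-1$ critical point in $e_t^\circ$. By compactness, for some sequence $t_n \uparrow t^*$ the critical points $p_{t_n} \in e_{t_n}^\circ$ converge to a limit $p^* \in \overline{e}_{t^*}$. The point $p^*$ cannot lie in $e_{t^*}^\circ$ (else by continuity of $\nabla u_t$ and Proposition \ref{prop:local-number-of-critical-points} it would be an index $-1$ critical point of $u_{t^*}$ in $e_{t^*}^\circ$, contradicting the choice of $t^*$), and cannot be the acute vertex of $e_{t^*}$ (by Lemma \ref{lem:convergence-to-vertex-coefficient-to-zero} with $\beta < \pi/2$ combined with Corollary \ref{coro:1-connected-index}(2) and hypothesis (3)). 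Hence $p^* = w_{t^*}$; applying Lemma \ref{lem:convergence-to-vertex-coefficient-to-zero} with $\pi/2 < \beta < \pi$ gives $c_1(u_{t^*}, w_{t^*}) = 0$, and Corollary \ref{coro:1-connected-index} yields $\ind(u_{t^*}, w_{t^*}) = +1$.

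I would then choose a disk $D$ around $w_{t^*}$ with $\partial D \cap \crit(u_{t^*}) = \emptyset$ and no other vertex in $\overline{D}$. By Proposition \ref{prop:local-number-of-critical-points}, the sum $S$ of indices of $u_t$-critical points in $D$ is locally constant near $t^*$. Since $c_1(u_t, w_t) \to 0$ as $t \to t^*$ but $c_1(u_0, w_0) \neq 0$ (compatibly with $\ind(u_0, w_0) = 0$), the zero of $t \mapsto c_1(u_t, w_t)$ at $t = t^*$ is isolated, so $\ind(u_t, w_t) = 0$ in a punctured neighborhood of $t^*$ by Corollary \ref{coro:1-connected-index}. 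Consequently the sum of indices in $D \setminus \{w_t\}$ equals $S$ for $t$ just less than $t^*$ and equals $S - 1$ at $t = t^*$. The critical points of $u_t$ absorbed into $w_{t^*}$ thus have net index $+1$; since $p_t$ contributes $-1$, the remaining absorbed critical points sum to $+2$, forcing at least two additional index $+1$ critical points of $u_t$ on sides adjacent to $w_t$ that converge to $w_{t^*}$ as $t \uparrow t^*$.

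The main obstacle is to convert these additional $+1$ critical points into an unstable critical point. By hypothesis (4) no index $+1$ nonvertex critical points exist at $t = 0$, yet Lemma \ref{lem:top-stable-implies-stable} forces them to persist backward in $t$. The only ways a stable index $+1$ critical point can cease to exist going backward are either via an earlier crossing event at some $w_{s}$ with $c_1(u_s, w_s) = 0$ (recursing the analysis at a strictly smaller time), or via a bifurcation merging it with an index $-1$ critical point to form a degenerate index $0$ critical point that does not extend to $t$ less than the bifurcation time. Because $c_1(u_t, w_t)$ is a smooth, non-identically-zero function of $t$, its zero set in $[0, t^*]$ is finite and the recursion of crossing events must terminate; at its base the bifurcation must occur, and the resulting degenerate critical point---present at the bifurcation time but with no nearby critical point of $u_t$ for $t$ below that time---is unstable by definition, producing the required unstable critical point at some $t \in (0, 1)$ and completing the contradiction.
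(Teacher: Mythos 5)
The proposal diverges from the paper's argument in a crucial way, and the divergence introduces genuine gaps. The paper defines $t^*$ as the supremum of $t$ such that $A_s$ (the set of critical points in a neighborhood of $w_s$) contains exactly one nonzero-index critical point for every $s \le t$. By Proposition \ref{prop:local-number-of-critical-points} the total index over $A_t$ is conserved (equal to $-1$), and one shows that the unique index-$(-1)$ critical point $p_s$ on $[0,t^*)$ converges to a limit $p_{t^*}$ which, having index $-1$, is \emph{not} the obtuse vertex (Corollary \ref{coro:1-connected-index}(5)) and so remains in the interior of $e_{t^*}$. The conclusion then splits into two cases: either $A_{t^*}$ contains an additional critical point (which is immediately unstable, since $A_s=\{p_s\}$ for $s<t^*$), or for $t_n\searrow t^*$ several nonzero-index critical points accumulate in $e_{t_n}^\circ$, forcing at least three of them by index parity, which is ruled out via $\Zcal(L_{e_{t_n}}u_{t_n})$ and Proposition \ref{prop:degree-1-stable} together with hypothesis (2). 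Your proposal takes a different tack: you track the index-$(-1)$ critical point until it reaches $w_{t^*}$ and then study the sign change in $c_1(u_t,w_t)$.

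This creates two real problems. First, you assert that the zero of $t\mapsto c_1(u_t,w_t)$ at $t^*$ is isolated (and later that this function is ``smooth, non-identically-zero'' with finite zero set). Neither claim is justified: the hypotheses give only a \emph{continuous} family $t\mapsto u_t$, so $c_1(u_t,w_t)$ is merely continuous, and even analyticity would not rule out accumulating zeros in $[0,t^*]$. Without isolated zeros the index computation at $w_t$ for $t$ near $t^*$ is not under control, so the existence of the extra index-$(+1)$ critical points is not established. Second, the closing bifurcation argument is only a sketch of intent, not a proof. The assertion that the extra $+1$ critical points, traced backward, ``must'' either encounter another $c_1$-zero crossing or merge with an index-$(-1)$ point at a degenerate critical point that is unstable is exactly the content one has to prove, and the claimed taxonomy of how a stable critical point can cease to exist is neither stated precisely nor derived from the lemmas available. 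In particular, you never invoke the mechanism the paper actually uses to close the argument --- that three boundary critical points on $e_{t}$ would force a degree-one vertex of $\Zcal(L_{e_t}u_t)$ in the interior of a side not adjacent to $w_t$, contradicting hypothesis (2). Reorganizing around the set $A_t$ and the index-$(-1)$ obstruction at the obtuse vertex, as the paper does, avoids both difficulties.
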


\begin{proof}
For each $t \in [0,1]$, let $A_t$ be the set of critical points $p$ of $u_t$
such that either $p=w_t$ or $p$ lies in the interior of a side of $Q_t$
that is adjacent to $w_t$. 
We claim that there exists $\delta>0$ so that for all $t$
no element of $A_t$
is within distance $\delta$ of an acute vertex. Indeed, if not, then 
there would exist $t \in [0,1]$, a sequence $t_n \to t$,
and a sequence of critical points $p_n$ of $u_n$
that converges to an acute vertex $v$. 
Lemma \ref{lem:interior-crit-pt-convergence} would then imply that  
$c_0 = 0$ at $v$, but this would contradict part (3) of Corollary
\ref{coro:1-connected-index} and condition (3) above. 

By condition (4), the set $A_0$ has exactly one element $p_0$,
and it follows from Proposition \ref{prop:index-thm}
that the index of $p_0$ equals $-1$. 
Thus, Proposition \ref{prop:local-number-of-critical-points} 
implies that the sum of the indices of the critical points 
in $A_t$ equals $-1$.

Let $t^*$ be the supremum of $t \in [0,1]$ 
such that $A_s$ contains exactly one nonzero index point, $p_s$, 
for each $s \leq t$. It follows from Proposition \ref{prop:local-number-of-critical-points}
that $s \mapsto p_s$ is continuous on $[0,t^*)$
and the index of each $p_s$ equals $-1$. Moreover, as $s \nearrow t^*$
the point $p_s$ converges to a point $p_{t^*}$.  
Indeed, if $p_t$ were to have more than one limit point as $t \nearrow t^*$, 
then, since $t \mapsto p_t$ is continuous 
for $t< t^*$, we would have a nontrivial 
continuum of critical points. But since $Q_{t^*}$ is not 
a rectangle,
the function $u_{t^*}$ has only finitely many critical 
points \cite{J-M-arc}. 

Proposition \ref{prop:local-number-of-critical-points}
implies that the index of $p_{t^*}$ equals $-1$.
It follows that
the critical point $p_{t^*}$ lies in the interior of $e_{t^*}$.
Indeed, otherwise, condition (5) would imply that $p_s =w_s$
for some $s \leq t^*$. But this would contradict part (5) 
of Corollary \ref{coro:1-connected-index}.

 If $A_{t^*}$ contains a critical point $q$ that is distinct 
from $p_{t^*}$ then $q$ is an unstable critical point since $p_s$
is the only critical point in $A_s$ for $s<t^*$.
For the remainder of the proof we will suppose 
that $p_{t^*}$ is the only element of $A_{t^*}$.

By the definition of $t^*$, there exists a sequence 
$t_n \searrow t^*$ such that $A_{t_n}$ consists of more
than one nonzero index critical point. 
Since $p_{t^*}$ is the only critical point in $A_{t^*}$
these points converge to $p_{t^*}$,
and in particular for $n$ sufficiently large, the set $A_{t_n}$ lies 
in the interior of $e_{t_n}$. 
By Proposition \ref{prop:index-at-least-minus-one}, each
nonzero index critical point has index $+1$ or $-1$. Hence 
since the sum of the indices equals $-1$, the set $A_{t_n}$ 
contains at least three critical points.

But this is impossible. 
Indeed, if three critical points of $u_{t_n}$ 
were to lie in the interior of $e_{t_n}$, 
then $\Zcal(L_{e_{t_n}}  u_{t_n})$ would have three degree 1 vertices 
that lie in $\partial Q_{t_n} \setminus e_{t_n}$, 
and in particular some degree 1 vertex
would lie in the interior of a side not adjacent to $w_{t_n}$.
But this degree 1 vertex would be a nonzero index 
critical point by Proposition \ref{prop:degree-1-stable}, 
thus contradicting (2). 

\end{proof}


\section{Breaking acute triangles along a side}
\label{sec:break}

In this section, we will construct families of quadrilaterals
that satisfy the hypotheses (2) through (5) of Proposition \ref{prop:blocking}. The construction consists of:

\begin{enumerate}[1.]
\item Producing a nonempty open set $\Ncal$ of acute triangles 
    $T$ such that the set of critical points of each 
    second Neumann eigenfunction $u$ consists only 
    of the three vertices and an index $-1$ 
    critical point;
    
\item `Breaking' the side that contains the index
   $-1$ critical point of $T \in \Ncal$ to create
   quadrilaterals for which each acute vertex is a critical point and  
   for which the only sides that may contain critical points in their interior
   are the sides adjacent to the new obtuse vertex;
     
\item Choosing a path $w_t$ of break points 
    so that the resulting path $Q_t$ of quadrilaterals
    forces `blocking' to occur.

\end{enumerate}

We now provide the details of this construction.
Define $\Ncal$ to be the set of acute triangles $T$ such that
if $u$ is any second Neumann eigenfunction on $T$, then 
\begin{enumerate}
\item each vertex of $T$ is a local extremum of $u$,

\item $u$ has exactly one nonvertex critical point $p$.

\item the critical point $p$ is nondegenerate.
\end{enumerate}
Proposition \ref{prop:index-thm} implies that $p$ has index equal to $-1$. 
The main theorem of \cite{Erratum} implies that $p$ lies on a side of $T$.
Note that equilateral triangles do not belong to $\Ncal$,
and hence by a result of Siudeja \cite{Siudeja}, the second Neumann 
eigenspace of $T$ is one dimensional for each $T \in \Ncal$.

\begin{lem}
\label{lem:N-open}
The set $\Ncal$ is open in the space of acute triangles.
\end{lem}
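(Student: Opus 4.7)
The plan is to establish openness by a contrapositive argument. Suppose $T_0 \in \Ncal$ and $T_n \to T_0$ is a sequence of acute triangles. Since $T_0$ is acute and not equilateral (equilateral triangles are excluded from $\Ncal$ by the remark preceding the lemma), the same holds for each $T_n$ with $n$ large, so by Siudeja's theorem the second Neumann eigenvalue is simple on $T_0$ and on each such $T_n$. After $L^2$-normalizing, a subsequence of the corresponding second Neumann eigenfunctions $u_n$ converges to $\pm u_0$, and without loss of generality to $u_0$, since the defining conditions of $\Ncal$ are invariant under $u \mapsto -u$. The goal is then to verify conditions (1), (2), and (3) of the definition of $\Ncal$ for $u_n$ when $n$ is large.

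Condition (1) persists by continuity of the leading Bessel coefficient. Each vertex $v$ of $T_0$ is a local extremum of $u_0$ and has acute angle, so Corollary \ref{coro:1-connected-index}(2) gives $c_0(u_0,v) \neq 0$; the corresponding coefficient at the corresponding vertex $v_n$ of $T_n$ depends continuously on the pair (eigenfunction, sector), so it remains nonzero for large $n$, and the same corollary then forces $v_n$ to be a local extremum of $u_n$. For conditions (2) and (3), let $p_0$ be the unique nonvertex critical point of $u_0$, which lies on a side $e_0$ by the main result of \cite{Erratum}. Reflecting $u_n$ and $u_0$ across the corresponding sides yields real-analytic functions $\tu_n \to \tu_0$ in a disc $D$ around $p_0$ (the Neumann condition ensures that the reflection is smooth across $e_0$, and the Hessian of $\tu_0$ at $p_0$ coincides with that of $u_0$ and is therefore nondegenerate). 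The implicit function theorem then produces, for large $n$, a unique zero $p_n$ of $\nabla \tu_n$ in $D$, which by reflection symmetry must lie on the corresponding side $e_n$; continuity of the Hessian makes $p_n$ a nondegenerate critical point of $u_n$ of index $-1$.

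The main obstacle is ruling out further critical points of $u_n$. Suppose $q_n$ is a critical point of $u_n$ distinct from the three vertices and $p_n$; passing to a subsequence, $q_n \to q_\infty \in \overline{T_0}$. If $q_\infty$ is not a vertex of $T_0$, then $\nabla u_0(q_\infty) = 0$ by continuity, and since $p_0$ is the only nonvertex critical point of $u_0$ we must have $q_\infty = p_0$; but then the uniqueness clause of the implicit function theorem forces $q_n = p_n$ for large $n$, a contradiction. If $q_\infty$ is a vertex $v$ of $T_0$, the angle at $v$ is acute (hence $< \pi/2$), so Lemma \ref{lem:convergence-to-vertex-coefficient-to-zero} gives $c_0(u_0, v) = 0$, contradicting the nonvanishing already established for condition (1). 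This exhausts the cases, so $T_n \in \Ncal$ for all sufficiently large $n$, which proves openness.
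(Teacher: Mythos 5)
Your proof is correct and takes essentially the same route as the paper's: establish openness of condition (1) via persistence of the zeroth Bessel coefficient, use the reflected-Hessian implicit function theorem to show that a nondegenerate nonvertex critical point persists (conditions (2)--(3) locally), and then rule out spurious critical points by showing they cannot accumulate at a vertex because that would force $c_0 = 0$. The paper's written proof is terser and inverts the logic slightly (it declares (1) and (3) ``open'' and then argues by contradiction only for the failure of (2)), but the underlying mechanism is identical.

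One small improvement you make: to exclude a critical point sequence $q_n$ converging to a vertex $v$, you cite Lemma~\ref{lem:convergence-to-vertex-coefficient-to-zero}, which applies to any critical point and yields $c_0 = 0$ directly at an acute vertex. The paper instead cites Lemma~\ref{lem:interior-crit-pt-convergence}, which as stated requires $p_n$ to be interior, and also phrases the contradiction as ``$c_1 = 0$ contradicting (1)'' when really it is $c_0 = 0$ that contradicts $v$ being a local extremum (via Corollary~\ref{coro:1-connected-index}). Your citation is the more robust one, since the critical points in question will in fact lie on the boundary. You also make explicit the appeal to Siudeja's simplicity result to normalize the convergent eigenfunction sequence, which the paper handles implicitly.
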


\begin{proof}
By Corollary \ref{coro:1-connected-index}, a vertex $v$ 
of an acute triangle is a local extremum if and only if $u(v) \neq 0$.
Thus, condition (1) is open. 
A critical point $p$ is nondegenerate if and only if the
determinant of the Hessian at $p$ is nonzero, and hence 
condition (3) is also an open condition.

Thus, if $\Ncal$ were not open, then there would exist $T \in \Ncal$ and 
a sequence $T_n$ converging to $T$ such that condition (2) is not satisfied 
for each $n$. In particular, for each 
$n$ there would exist a second Neumann eigenfunction $u_n$ on $T_n$ 
with distinct nonvertex critical points $p_n$ and $q_n$. 

By passing to a subsequence if 
necessary, we may assume without loss of generality that $u_n$ 
converges to an eigenfunction $u$ on $T$.
Neither of the sequences $p_n$ nor $q_n$ can converge to 
a vertex of $T$ because then, by 
Lemma \ref{lem:interior-crit-pt-convergence}, we would have $c_1=0$
contradicting (1). Thus, by (2), both sequences converge to the unique 
nonvertex critical point $p$ of $T$, and it would
follow that $p$ is a degenerate critical point, contradicting (3). 
\end{proof}

The set $\Ncal$ is also nonempty.

\begin{lem} \label{lem:isosceles}
\label{lem:subequilateral}
Let $T$ be an isosceles triangle with reflection symmetry $\sigma$,
and let $u$ be a second Neumann eigenfunction of $T$.
If the angle of the apex vertex $v$ fixed by $\sigma$ is less than $\pi/3$, 
then 
\begin{enumerate}
\item each vertex is a local extremum of $u$,

\item $u$ has exactly one non-vertex critical point $p$,
      the midpoint of the side $e$ opposite to $v$,

\item $p$ is nondegenerate with  index $-1$,
     
\item  $u(z) \neq 0$ for each $z \in e$. 
\end{enumerate}
\end{lem}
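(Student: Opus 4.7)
The plan is to exploit the reflection symmetry of $T$ together with the structural tools of Sections \ref{sec:sector}--\ref{sec:simply-connected}. First, I would establish that $u\circ\sigma = u$. The second Neumann eigenspace of $T$ splits into $\sigma$-symmetric and $\sigma$-antisymmetric subspaces, each identifiable with an eigenspace on the half-triangle $T_+ := T/\sigma$ with, respectively, pure Neumann conditions or mixed Neumann/Dirichlet conditions (Dirichlet on the axis edge). For apex angle $\alpha<\pi/3$ the lowest nontrivial symmetric eigenvalue lies strictly below the lowest antisymmetric one---a classical comparison provable by testing the one-axial-mode profile against antisymmetric trial functions that must vanish on the long leg of $T_+$. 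Combined with Siudeja's simplicity result for non-equilateral isosceles triangles, this forces $u$ to be $\sigma$-invariant.

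Given that $u$ is symmetric, I would classify the nodal arc $\Zcal(u)$, which by Proposition \ref{prop:simple-arc} is simple with endpoints on two distinct sides. At the apex $v$, symmetry forces $c_n(v)=0$ for odd $n$; if moreover $c_0(v)=0$, the leading Bessel term $c_k r^{k\pi/\alpha}\cos(k\pi\theta/\alpha)$ (with $k \geq 2$ even) produces at least two nodal rays at $v$, contradicting the simple-arc property. So $c_0(v)\neq 0$. Symmetry plus the Neumann condition on $e$ gives $\nabla u(p)=0$, so $p\in\crit(u)$ and hence $p\notin \Zcal(u)$ by Proposition \ref{prop:simple-arc}. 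Consequently $\Zcal(u)$ crosses the axis of symmetry transversally at a single interior point, and its endpoints---being $\sigma$-paired and forced by Proposition \ref{prop:simple-arc} to lie in distinct sides---must lie on the two equal sides $f_1, f_2$ rather than on the base $e$. This yields (4); since $w,w'\in e$ we then have $u(w),u(w')\neq 0$, so $c_0\neq 0$ at each base vertex as well, and Corollary \ref{coro:1-connected-index} makes every vertex a local extremum, proving (1).

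For (2) and (3), I note that $p\in\crit(u)$ and that the Erratum to \cite{J-M} rules out interior critical points on an acute triangle, so any other critical points of $u$ lie on $\partial T$ and, by $\sigma$-symmetry, occur in $\sigma$-paired sets off the axis. The index formula (Proposition \ref{prop:index-thm}) together with the three $+1$-index vertices from (1) gives $\ind(u,p) + 2\Sigma = -1$, where $\Sigma$ is the sum of indices over symmetric pairs, so $\ind(u,p)$ is odd. To rule out the pairs and pin $\ind(u,p) = -1$, I would use that the two extrema of the first nontrivial Neumann eigenfunction on $T_+$ sit at its two non-right-angle vertices, so on $T$ the extrema $v$ and $\{w,w'\}$ carry opposite signs: then $u|_{f_i}$ runs between opposite-sign extrema, $u|_e$ is $w$-valued at its endpoints with $u(p)$ of the same sign by connectedness of the $u > 0$ and $u < 0$ regions identified in step two, and an analysis using Lemma \ref{lem:nodal-sector} and Lemma \ref{lem:paralle-two-degree-1-vertices} applied to the three side-parallel vector fields excludes any further boundary critical points. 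Nondegeneracy (item (3)) follows: symmetry kills the mixed Hessian entry at $p$; the eigenvalue equation gives $\partial_x^2 u(p) + \partial_y^2 u(p) = -\mu\, u(p) \neq 0$ by (4); and $\ind(u,p)=-1$ forces both diagonal entries to be nonzero of opposite signs.

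The principal obstacle is the quantitative comparison $\mu^{\mathrm{sym}}(\alpha) < \mu^{\mathrm{anti}}(\alpha)$ for $\alpha<\pi/3$, which is what secures the $\sigma$-symmetry; once that is in hand, the remainder is a coordinated use of the Bessel-expansion tools, the simple-arc theorem, and the index formula. A secondary subtle point is the elimination of spurious critical-point pairs on the equal sides in the third step, where the monotonicity of $u|_{f_i}$ between opposite-sign extrema (and hence the absence of interior zero-index critical points, which Lemma \ref{lem:paralle-two-degree-1-vertices} would otherwise turn into additional nonzero-index vertices of $\Zcal(L_{f_i} u)$ on the remaining sides) is the decisive ingredient.
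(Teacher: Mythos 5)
Your plan correctly identifies the $\sigma$-symmetry of $u$, the role of the nodal-arc structure, and the index formula, but it has three genuine gaps. First, $\sigma$-invariance of $u$ is derived from an asserted ``classical comparison'' $\mu^{\mathrm{sym}}(\alpha)<\mu^{\mathrm{anti}}(\alpha)$ for $\alpha<\pi/3$; that comparison is exactly what needs to be proved or cited, and the paper simply quotes Lemma 3.1 of Miyamoto, which gives both simplicity of the second Neumann eigenvalue and symmetry of $u$. Second, your elimination of boundary critical points off the symmetry axis (the core of item (2)) is contained in the sentence ``an analysis using Lemma \ref{lem:nodal-sector} and Lemma \ref{lem:paralle-two-degree-1-vertices} applied to the three side-parallel vector fields excludes any further boundary critical points,'' which is not an argument: those lemmas address index-zero points and side-parallel nodal arcs, and you never rule out, e.g., a $\sigma$-pair of index $-1$ points on the equal sides together with $\ind(u,p)=+1$, which your own index-count constraint $\ind(u,p)+2\Sigma=-1$ permits. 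The paper's decisive step, which your sketch never takes, is that the restriction of the $\sigma$-symmetric $u$ to the right half-triangle $T_+$ is itself a second Neumann eigenfunction of the right triangle $T_+$, so Theorem 4.1 of \cite{Erratum} applies verbatim and says $u|_{T_+}$ has no nonvertex critical points and its acute vertices are local extrema; this yields (1), (2) and hence $\ind(u,p)=-1$ in one stroke.

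Third, your nondegeneracy argument is wrong. You claim $\ind(u,p)=-1$ forces both diagonal Hessian entries at $p$ to be nonzero, but this fails: after reflecting $u$ across $e$ to $\tu$ and using the $\sigma$-symmetry, $\tu$ is even in both variables, and if $\partial_x^2\tu(p)=0\neq\partial_y^2\tu(p)$ the Weierstrass factorization gives $\tu(z)-\tu(p)=c(z)\bigl(y^2+b(x)\bigr)$ with $b$ even, $b(x)=b_kx^k+\cdots$, $k\ge 4$; if $b_k<0$ the level set of $\tu$ through $p$ has the four branches $y=\pm\sqrt{-b(x)}$, so there are two arcs in $P$ and $\ind(u,p)=1-2=-1$ even though the Hessian is degenerate, and the eigenvalue equation does not exclude this. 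The paper instead uses the $\sigma$-forced vanishing of the cubic coefficient together with Theorems 7.3 and 7.4 of \cite{J-M}, which show that degeneracy would produce an additional nonvertex critical point, contradicting (2). Your argument for item (4) via $\sigma$-pairing of the nodal-arc endpoints and Proposition \ref{prop:simple-arc} is essentially the same as the paper's.
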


\begin{proof}
By Lemma 3.1 in \cite{Miyamoto} the second Neumann eigenvalue of $T$ has 
multiplicity one, and $u$ is symmetric with respect to $\sigma$. 
It follows that the the midpoint $p$ of the side $e$ preserved by $\sigma$
is a critical point.
Let $T_+, T_- \subset T$ be the two right triangles such that $\sigma(T_+)=T_-$
and $T_+ \cup T_-= T$. Since $u$ is symmetric with respect to $\sigma$, 
the restriction of $u$ to $T_{\pm}$ is a second Neumann 
eigenfunction of $T_{\pm}$.

By Theorem 4.1 in \cite{Erratum}, the restriction 
of $u$ to the right triangle $T_{\pm}$ has no nonvertex critical 
points and each acute vertex of $T_{\pm}$ is a local extremum.
It follows that each vertex of $T$ is a local extremum of $u$ and the midpoint $p$
of $e$ is the only other critical point of $u$. 
Thus, Theorem \ref{prop:index-thm} implies that $p$ has index $-1$.

Next we show that $u$ does not vanish on $e$.
By Proposition \ref{prop:simple-arc}, the nodal set $\Zcal(u)$ 
does not contain a critical point
and hence does not contain the midpoint $p$. 
Thus, if there did exist $z \in e$ with $u(q)=0$, 
then $z \neq p$ and hence $\sigma(z) \neq q$. Since $u$ is
symmetric, we would have $u(\sigma(z))=0$ but then $\sigma(z)$ 
would be a
second endpoint of $\Zcal(u)$ that lies in $e$, a contradiction.
Therefore, $u$ does not vanish on $e$.

Finally, by examining the Taylor expansion of $u$ about $p$,
we find that the $p$ is non-degenerate. Indeed,
without loss of generality,
$q=0$ and $e$ lies in the $x$-axis. Since $u \circ \sigma =u$,
the restriction of $u$ to $e$ is an even function of $x$.  
In particular, the Taylor coefficient $a_{30}=0$. Thus, if
$p$ were degenerate, then Theorems 7.3 and 7.4 in \cite{J-M}
would imply that $u$ has an additional non-vertex critical point,
a contradiction.
\end{proof}

Next, we will `break' each $T \in \Ncal$ along the side that contains
the index $-1$ critical point. We first give a precise definition
of `breaking': Let $T$ be a triangle\footnote{One can easily extend the notion of 
breaking along a side to general polygons.} with vertices $v_1, v_2, v_3$. 
Let $e$ be a side of $T$,  let $w$ be a point that lies in the interior of $e$, 
and let $n_w$ be the outward pointing unit normal 
vector at $w$. For each $\epsilon \geq 0$, define 
$w(\epsilon)= w + \epsilon \cdot n_w$, and define 
$Q(T, w, \epsilon)$ to be the convex hull of 
$\{v_1, v_2, v_3, w(\epsilon) \}$. For $\epsilon>0$, the polygon 
$Q(T, w, \epsilon)$ is a nondegenerate quadrilateral. 
We say that $Q(T, w, \epsilon)$ 
{\it is the result of breaking $T$ along $e$ at the point $w$
at distance $\epsilon$.}

\begin{lem}
\label{lem:perturb-quad}
Let $T \in \Ncal$ and let $e$ be the side of $T$ that 
contains the index $-1$ critical point. Let $K$ be a 
compact subset of the interior of $e$.
There exists $\delta>0$ such that 
if $0 \leq \epsilon < \delta$ and $w \in K$, then
\begin{enumerate}[(a)]
    \item the second Neumann eigenfunction $u$ of $Q(T, w, \epsilon)$ 
          is unique up to scalar multiplication,

    \item each acute vertex of $Q(T, w, \epsilon)$ is a local extremum of $u$,
    
    \item if $e'$ is a side that does not contain the obtuse vertex
          $w$, then the interior of $e'$ does 
          not contain a critical point of $u$.  
\end{enumerate}
\end{lem}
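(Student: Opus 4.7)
The plan is to prove each claim by a compactness-and-continuity argument, regarding $T$ as the degenerate quadrilateral with a vertex of angle $\pi$ at $w$, so that the family $Q(T, w, \epsilon)$ converges to $T$ as $\epsilon \searrow 0$ in the sense of Section \ref{sec:topology}. Since $T \in \Ncal$ is not equilateral, the second Neumann eigenspace of $T$ is one-dimensional by Siudeja's theorem. Simplicity of eigenvalues persists under sufficiently small perturbations, so for all $(w, \epsilon)$ in a small enough one-sided neighborhood of $K \times \{0\}$, the second Neumann eigenvalue of $Q(T, w, \epsilon)$ remains simple, giving (a); moreover, after fixing a normalization we may assume that the second eigenfunction $u_{w, \epsilon}$ converges to the (unique up to scalar) second eigenfunction $u$ of $T$ uniformly in $w \in K$.

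For (b), each acute vertex $v$ of $Q(T, w, \epsilon)$ is also a vertex of $T$, and its angle depends continuously on $(w, \epsilon)$, staying strictly less than $\pi/2$ for small $\epsilon$ and $w \in K$ by the acuteness of $T$. Since $v$ is a local extremum of $u$ by the definition of $\Ncal$, part (2) of Corollary \ref{coro:1-connected-index} yields $c_0(u, v) \neq 0$. Bessel coefficients vary continuously with $(w, \epsilon)$, so $c_0(u_{w, \epsilon}, v) \neq 0$ for all sufficiently small $\epsilon$ (uniformly in $w \in K$), and the same corollary then gives that $v$ is a local extremum of $u_{w, \epsilon}$.

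For (c), suppose for contradiction that there exist sequences $\epsilon_n \searrow 0$, $w_n \in K$, and critical points $p_n$ of $u_n := u_{w_n, \epsilon_n}$ lying in the interior of a side $e_n'$ of $Q(T, w_n, \epsilon_n)$ not adjacent to $w_n(\epsilon_n)$. These non-adjacent sides are precisely the two sides of $T$ other than $e$. Passing to a subsequence, we may assume $w_n \to w_\infty \in K$ and $p_n \to p^*$, where $p^*$ lies in the closure of one of the two non-$e$ sides of $T$. If $p^*$ lies in the interior of such a side, then by continuity of $\nabla u_n$ toward $\nabla u$ the point $p^*$ is a critical point of $u$, contradicting the fact that $T \in \Ncal$ has its unique nonvertex critical point on $e$. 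If instead $p^*$ is a vertex of $T$, then it is acute (angle less than $\pi/2$), and Lemma \ref{lem:convergence-to-vertex-coefficient-to-zero} forces $c_0(u, p^*) = 0$, contradicting the conclusion of the previous paragraph that $c_0 \neq 0$ at each vertex of $T$.

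The main obstacle is the setup paragraph: justifying the continuity of second Neumann eigenvalues and eigenfunctions across the $\epsilon = 0$ degeneration, where the number of vertices formally changes from three (for $T$) to four (for $Q(T, w, \epsilon)$ with $\epsilon > 0$). This is handled by treating $T$ itself as a quadrilateral with a vertex of angle $\pi$ at $w$, a convention already allowed in Section \ref{sec:eigen-obtuse}. Once this framework is adopted, the standard continuity of eigendata under polygon convergence (used throughout Sections \ref{sec:crit-pts-converge-vertex} and \ref{sec:eigen-obtuse}) together with continuous dependence of Bessel coefficients on the domain provides the convergences needed in all three parts.
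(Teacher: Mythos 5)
Your proof is correct and follows essentially the same route as the paper's: part (a) from openness of simplicity of the second eigenvalue together with Siudeja's theorem, part (b) from openness of the nonvanishing of $c_0$ at an acute vertex via Corollary \ref{coro:1-connected-index}, and part (c) by extracting a convergent subsequence of boundary critical points and ruling out both possible limits. One minor note: for the vertex-limit case in (c) you invoke Lemma \ref{lem:convergence-to-vertex-coefficient-to-zero}, which is the more apt citation since the critical points $p_n$ lie on a boundary ray rather than in the interior of the sector; the paper instead cites Lemma \ref{lem:interior-crit-pt-convergence}, whose hypotheses are stated for interior critical points, so your choice of lemma is actually the cleaner one.
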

\begin{proof}
The simplicity of the second Neumann eigenvalue is an open condition, 
and the second eigenvalue of each $T \in \Ncal$ is simple by \cite{Siudeja}.
It follows that there exists $\delta'>0$, so that (a) holds
for each $Q(T,w, \epsilon)$ with $\epsilon <\delta'$ and $w \in K$.
Corollary \ref{coro:1-connected-index} implies that 
condition (b) is an open condition. In particular,
$c_0 \neq 0$ at each acute vertex. 

Thus, if the claim were false, then there 
would exist a sequence $\epsilon_n \to 0$ and $w_n \in K$ 
such that $Q_n:=Q(T, w_n,\epsilon_n)$ 
has a second Neumann eigenfunction $u_n$ with 
a nonvertex critical point $p_n$ on a side $e'$
that does not contain $w_n(\epsilon_n)$. The sequence $Q_n$ 
converges to $T$, and thus by passing to a subsequence if necessary, 
we may assume that $u_n$ converges to an eigenfunction $u$ on $T$.
If the sequence $p_n \in e'$ were to converge 
to a vertex $v$ of $T$, then Lemma \ref{lem:interior-crit-pt-convergence}
would imply $c_1=0$ at $v$, a contradiction. 
If the sequence $p_n$ converges to a point $p$
in the interior of $e'$, then $p$ is a critical
point of $u$, contradicting the assumption that
the `unbroken' sides of $T$ contain no critical points.
\end{proof}

Let $\delta_{T,K}$ denote the supremum of all possible $\delta$ 
for which the statement of Proposition \ref{lem:perturb-quad}
is true for the given compact set $K$.

\begin{lem}
\label{lem:blocking-fam}
Let $T \in \Ncal$ and let $e$ be the side of $T$ that contains 
the index $-1$ critical point $p$. Let $w_t$ be a path in the 
interior of $e$ so that $w_0$ and $w_1$ lie in distinct components of 
$e \setminus \{p\}$. If $K$ is the image of the path $w_t$,
then for each $\epsilon \in (0,\delta_{T,K})$,
the path $Q_t:=Q \left(T, w_t, \epsilon \cdot \sin( t \cdot \pi)\right)$  
has an associated path $u_t$ of second Neummann
eigenfunctions that satisfy the conditions (2) through (5) of Proposition 
\ref{prop:blocking}.
\end{lem}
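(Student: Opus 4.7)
The plan is to construct a continuous family $u_t$ of second Neumann eigenfunctions on $Q_t$ and to verify conditions (2) through (5) of Proposition \ref{prop:blocking}. For each $t \in [0,1]$, note that $\epsilon \sin(t\pi) \in [0, \delta_{T,K})$ and $w_t \in K$, so Lemma \ref{lem:perturb-quad} applies uniformly to $Q_t$. In particular, part (a) yields simplicity of the second Neumann eigenvalue for every $t$ (at $t \in \{0,1\}$ this reduces to Siudeja's simplicity theorem applied to $T \in \Ncal$), and standard eigenfunction perturbation theory provides a continuous family $u_t$. Label the endpoints of $e$ as $A, B$ so that the order along $e$ is $A, w_0, p, w_1, B$; this is possible because $w_0$ and $w_1$ lie in distinct components of $e \setminus \{p\}$. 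Define $e_t$ to be the side of $Q_t$ running from the broken vertex $\tilde w_t := w_t(\epsilon \sin(t\pi))$ to $B$. Then $t \mapsto e_t$ is continuous, $e_0 = [w_0, B]$ contains $p$ in its interior, and $e_1 = [w_1, B]$ does not contain $p$.

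Condition (3), that each acute vertex of $Q_t$ is a local extremum of $u_t$, is exactly Lemma \ref{lem:perturb-quad}(b). Condition (2), that each nonzero index critical point of $u_t$ is either a vertex or lies on a side adjacent to the broken vertex $\tilde w_t$, follows from Lemma \ref{lem:perturb-quad}(c): any critical point of $u_t$ is either a vertex, lies in the interior of a side containing $\tilde w_t$, or lies in the interior of a side not adjacent to $\tilde w_t$, and the last possibility is ruled out.

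For condition (4), we have $u_0 = u_T$, whose critical points are the three triangle vertices of $T$ together with $p$. The extra vertex $w_0$ of $Q_0$ is not a critical point of $u_T$: the Neumann condition along $e$ forces any critical point of $u_T|_e$ in $\mathrm{int}(e)$ to be a critical point of $u_T$, and $p$ is the unique such point, so $w_0 \neq p$ has nonzero tangential derivative. Hence $p$ is the unique nonvertex critical point of $u_0$, and $p \in \mathrm{int}(e_0)$ by construction. For condition (5), on $e_1 = [w_1, B]$ the critical points of $u_1 = u_T$ consist only of the endpoint $B$: the other endpoint $w_1 \neq p$ is not a critical point by the same argument; $\mathrm{int}(e_1)$ contains no critical point of $u_T$ since $p \in [w_0, w_1]$ is disjoint from $\mathrm{int}(e_1)$; and the remaining triangle vertices $A, C$ do not lie on $e_1$. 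Finally, $B$ is an acute vertex of $T$, and hence of $Q_1$, so $B$ is the acute vertex of $e_1$ guaranteed by the condition.

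The main delicate point is that the path $w_t$ necessarily passes through $p$ at some intermediate parameter $t^* \in (0,1)$, so the break point momentarily coincides with the index $-1$ critical point of $u_T$. However, this causes no difficulty for the family $Q_t$: the actual broken vertex $\tilde w_{t^*} = p + \epsilon \sin(t^*\pi) \, n_p$ is displaced off the line of $e$ by a strictly positive amount, so $Q_{t^*}$ remains a nondegenerate quadrilateral to which Lemma \ref{lem:perturb-quad} applies. Thus Lemma \ref{lem:perturb-quad} may be invoked uniformly over $t \in [0,1]$, and the remaining work is bookkeeping about which side of $Q_t$ contains each critical point of $u_t$.
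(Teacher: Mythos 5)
Your proof is correct and follows essentially the same route as the paper: apply Lemma \ref{lem:perturb-quad} uniformly for $\epsilon\sin(t\pi)\in[0,\delta_{T,K})$ to get the continuous eigenfunction path and conditions (2) and (3), then choose $e_t$ to be the subsegment of $e$ from the broken vertex toward the endpoint on the same side as $p$ and check (4), (5) directly at $t=0,1$ using the structure of $u_T$. Your write-up is more detailed than the paper's (which compresses the bookkeeping into two sentences, and in fact contains a small typo in its description of $e_0$ that your phrasing fixes), and your observation that the break distance $\epsilon\sin(t^*\pi)>0$ at the parameter where $w_{t^*}=p$ addresses a point the paper leaves implicit.
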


\begin{proof}
By the defintion of $\delta_{T,K}$, the quadrilateral 
$Q(T,w_t,\epsilon)$ satisfies (a), (b), and (c) of 
Lemma \ref{lem:perturb-quad}. Condition (a) implies that 
there exists a path $u_t$ of eigenfunctions of $Q_t$.
Condition (b) implies that $u_t$ satisfies condition (3) in 
Proposition \ref{prop:blocking}, and condition (c) implies 
that condition (2) is satisfied.

Let $e_t$ be the side so that $e_0$ is the
component of $e \setminus \{p\}$ that contains $p$.
It follows that conditions (4) and (5) of 
Proposition \ref{prop:blocking} are satisfied.
\end{proof}

\begin{thm}
\label{thm:unstable-break}
Suppose that each convex quadrilateral has no interior critical points.
Let $T \in \Ncal$ and let $e$ be the side of $T$ that contains 
the index $-1$ critical point. Then for each $\eta>0$ there exists 
$\epsilon \in (0, \eta)$ and $w$ in the interior of $e$ so that each
second Neumann eigenfunction $u$ of $Q(T,w, \epsilon)$ has an unstable
critical point.
\end{thm}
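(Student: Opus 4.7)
The plan is to apply Proposition~\ref{prop:blocking} to a one-parameter family of quadrilaterals obtained by dragging a break-point $w_t$ across the index $-1$ critical point $p$, while simultaneously raising the break-distance in a bump-function manner.  The bulk of the work has already been done in Lemma~\ref{lem:blocking-fam}; the remaining step is to arrange for the hypothesis of the theorem to supply the one remaining condition of Proposition~\ref{prop:blocking} that Lemma~\ref{lem:blocking-fam} does not verify, namely the absence of interior critical points.

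First, I would fix $T \in \Ncal$ and the side $e$ containing the index $-1$ critical point $p$ of a second Neumann eigenfunction $u$ of $T$.  Given $\eta > 0$, I would choose any smooth path $w_t$ ($t \in [0,1]$) in the interior of $e$ such that $w_0$ and $w_1$ lie in different components of $e \setminus \{p\}$, set $K$ to be its image, and let $\delta_{T,K}$ be the constant furnished by Lemma~\ref{lem:perturb-quad}.  Pick $\epsilon \in (0, \min(\delta_{T,K}, \eta))$, and consider the continuous family
\[
Q_t \; := \; Q\!\left(T, w_t, \epsilon \cdot \sin(t \cdot \pi) \right), \qquad t \in [0,1].
\]
For each $t \in (0,1)$ the break-distance $\epsilon \sin(t\pi)$ is strictly positive and strictly less than $\eta$, so $Q_t$ is a genuine convex quadrilateral; at the endpoints $t=0$ and $t=1$ the quadrilateral degenerates to $T$ itself.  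Lemma~\ref{lem:blocking-fam} already produces the associated path of second Neumann eigenfunctions $u_t$ satisfying conditions (2)--(5) of Proposition~\ref{prop:blocking}.

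Next, I would verify the two structural hypotheses of Proposition~\ref{prop:blocking}.  Convexity of $Q_t$ is automatic since it is defined as a convex hull.  For the angle condition, each of the three original vertices of $T$ is acute and its angle in $Q_t$ depends continuously on $t$ and tends to the original acute angle as $\epsilon \to 0$; for $\epsilon < \delta_{T,K}$ sufficiently small (shrinking $\delta_{T,K}$ if necessary) they remain acute for all $t \in [0,1]$.  The new vertex $w_t(\epsilon \sin(t\pi))$ is produced by pushing an interior point of $e$ outward along the normal; a direct computation shows that its angle approaches $\pi$ as $\epsilon \sin(t\pi) \to 0$, and more precisely lies in $(\pi/2, \pi)$ for all sufficiently small positive break-distances.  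This gives condition on the angles.  Finally, since each $Q_t$ is a convex quadrilateral, the standing hypothesis of the theorem (\emph{each} convex quadrilateral has no interior critical points) yields condition~(1) of Proposition~\ref{prop:blocking}.

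With all hypotheses in place, Proposition~\ref{prop:blocking} produces some $t^* \in (0,1)$ such that $u_{t^*}$ has an unstable critical point.  Setting $w := w_{t^*}$ and $\epsilon' := \epsilon \sin(t^* \pi) \in (0,\eta)$ gives the quadrilateral $Q(T, w, \epsilon')$ claimed by the theorem.  There is no real obstacle left once Lemma~\ref{lem:blocking-fam} and Proposition~\ref{prop:blocking} are in hand; the one point demanding minor care is ensuring that $\delta_{T,K}$ is chosen small enough that the three original vertices of $T$ remain acute and $w_t(\epsilon\sin(t\pi))$ has angle in $(\pi/2,\pi)$ throughout $t \in (0,1)$, which is a routine continuity argument applied to vectors along the boundary of $Q_t$.
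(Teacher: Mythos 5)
Your proposal is correct and follows the paper's argument almost verbatim: pick the bump-function path $Q_t = Q(T, w_t, \epsilon\sin(t\pi))$, invoke Lemma~\ref{lem:blocking-fam} for conditions (2)--(5) of Proposition~\ref{prop:blocking}, use the standing no-interior-critical-point hypothesis for condition~(1), and conclude via Proposition~\ref{prop:blocking}. The only addition you make---explicitly checking that the three original vertices stay acute and that the new vertex has angle in $(\pi/2,\pi)$ for small break-distance, and explicitly shrinking $\epsilon$ below $\eta$---is a fair piece of bookkeeping that the paper leaves implicit in the setup of Lemma~\ref{lem:blocking-fam}, not a different method.
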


\begin{proof}
Lemma \ref{lem:blocking-fam} provides us with a 
family of quadrilaterals $Q_t$ and second Neumann eigenfunctions $u_t$
that satisfy conditions (2) through (5) of Proposition \ref{prop:blocking}.
If each second Neumann eigenfuction on a quadrilaterals were to have no
interior critical points, then each $u_t$ would also satisfy
condition (1). Therefore, Proposition \ref{prop:blocking} would
imply that for some $t$ the function $u_t$ has an index zero 
critical point.
\end{proof}

\section{Second Neumann eigenfunctions on convex polygons}
\label{sec:convex}

\begin{prop} 
\label{prop:rotate-nodal}
Suppose that $P$ is convex without right angles
and suppose that $w$ lies in the interior of $P$. An arc of $\Zcal(R_w u)$
ends at a vertex $v$ of $P$ if and only if $v$ is a local extremum 
of $u$.
\end{prop}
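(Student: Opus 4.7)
The plan is to combine Corollary \ref{coro-rotation-nodal-arc} (which characterizes nodal arcs of $R_w u$ at a vertex in terms of the first two Bessel coefficients and the location of $w$ relative to the sector $S_\beta$) with Corollary \ref{coro:1-connected-index} (which characterizes local extrema in terms of those same Bessel coefficients), using convexity as the geometric bridge.

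First I would reduce to sectors. Fix a vertex $v$ of angle $\beta$ and, after applying a rigid motion, place $v$ at the origin with sides along $\theta = 0$ and $\theta = \beta$. Since $P$ is convex and contains $v$ with these two rays as the adjacent sides, the entire polygon $P$ is contained in the (infinite) sector $\{\theta \in [0,\beta]\}$. In particular, for any $w$ in the interior of $P$ we have $w \in S_\beta$. Because $P$ has no right angles, $\beta \neq \pi/2$, and because $P$ is convex, $\beta < \pi$; so $v$ is either acute ($\beta<\pi/2$) or obtuse ($\pi/2<\beta<\pi$). Also, $P$ is simply connected, so Proposition \ref{two:coeff:zero} guarantees that at least one of $c_0,c_1$ at $v$ is nonzero, which is the hypothesis required to invoke Corollary \ref{coro-rotation-nodal-arc}.

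Next I would do the case split. Suppose $\beta<\pi/2$. If $c_0\neq 0$, Corollary \ref{coro-rotation-nodal-arc}(1a) says an arc of $\Zcal(R_w u)$ ends at $v$ iff $w \in S_\beta$, which holds by the previous paragraph; so an arc does end at $v$. If $c_0=0$, then $c_1\neq 0$ by Proposition \ref{two:coeff:zero}, and Corollary \ref{coro-rotation-nodal-arc}(1b) says an arc ends at $v$ iff $w\notin S_\beta$, which fails; so no arc ends at $v$. In summary, for acute $v$, an arc ends at $v$ iff $c_0\neq 0$. Now suppose $\pi/2<\beta<\pi$: if $c_1\neq 0$, then by 2(a) an arc ends at $v$ iff $w\notin S_\beta$, which fails; if $c_1=0$, then by 2(b) an arc ends at $v$ iff $w\in S_\beta$, which holds. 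So for obtuse $v$, an arc ends at $v$ iff $c_1=0$.

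Finally I would translate the Bessel-coefficient conditions into the local-extremum condition using Corollary \ref{coro:1-connected-index}. For $\beta<\pi/2$, part (2) of that corollary says $c_0\neq 0$ iff $v$ is a local extremum; for $\pi/2<\beta<\pi$, part (4) says $c_1=0$ iff $v$ is a local extremum. Chaining these equivalences with the arc characterizations above gives the proposition in both cases. There is no serious obstacle: everything is a matter of assembling the previously established sector lemmas with the simple geometric observation that convexity places every interior point in the sector at every vertex.
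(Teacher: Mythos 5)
Your proof is correct and follows essentially the same route as the paper: both combine Corollary \ref{coro-rotation-nodal-arc} with Corollary \ref{coro:1-connected-index}, using convexity to place $w$ inside the sector at each vertex and Proposition \ref{two:coeff:zero} to license the hypothesis of Corollary \ref{coro-rotation-nodal-arc}. The only cosmetic difference is that the paper also cites Corollary \ref{coro:c_1-zero-nonzero-index} as an intermediate step, whereas you appeal directly to parts (2) and (4) of Corollary \ref{coro:1-connected-index}, which is a slightly more streamlined chain of equivalences.
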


\begin{proof}
If $w$ lies in $P$, then it lies in the interior of the 
sector associated to $v$.
By assumption the angle at $v$ lies in either $(0, \pi/2)$ or 
 $(\pi/2, \pi)$. The claim then follows from combining 
Corollary \ref{coro-rotation-nodal-arc}, 
Corollary \ref{coro:c_1-zero-nonzero-index},
and Corollary \ref{coro:1-connected-index}.
\end{proof}

With additional hypotheses, we can expand the scope of
Proposition \ref{prop:degree-1-stable} to include degree 1 vertices
of $\Zcal(R_wu)$ that are vertices of $P$.

\begin{coro} 
\label{coro:rotational-degree-1-stable}
Let $u$ be a second Neumann eigenfunction of a convex polygon $P$
with no right angles. If $w$ lies in the interior of $P$ 
then each degree one vertex of $\Zcal(R_w u)$ is a 
nonzero index critical point.
\end{coro}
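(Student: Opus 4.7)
The plan is to split the argument according to whether the degree one vertex $p$ of $\Zcal(R_w u)$ coincides with a vertex of $P$. This dichotomy is natural because Proposition \ref{prop:degree-1-stable} already treats the case when $p$ is not a vertex of $P$ (for both constant and rotational vector fields), so what remains is to handle the vertex case using the specific structure of the rotational vector field together with the convexity and no-right-angle hypotheses.

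In the first case, where $p$ lies in the interior of $P$ or in the interior of a side of $P$, the rotational vector field $R_w$ qualifies as one of the two admissible vector fields in Proposition \ref{prop:degree-1-stable}, which immediately yields that $p$ is a critical point of nonzero index. No further input is needed; the hypotheses of convexity and no right angles are irrelevant for this half.

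For the second case, suppose $p$ is a vertex $v$ of $P$. Since $v$ is a degree one vertex of $\Zcal(R_w u)$, at least one arc of $\Zcal(R_w u)$ ends at $v$. Because $P$ is convex and has no right angles, and because $w$ lies in the interior of $P$ (and therefore in the interior of the sector at $v$), the hypotheses of Proposition \ref{prop:rotate-nodal} are satisfied. That proposition then forces $v$ to be a local extremum of $u$. By Proposition \ref{prop:local-ext-index-1}, a local extremum has index equal to $1$, which is nonzero. This completes the argument.

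I do not anticipate a main obstacle: the corollary is essentially a bookkeeping combination of Proposition \ref{prop:degree-1-stable} (for non-vertex points) and Proposition \ref{prop:rotate-nodal} together with Proposition \ref{prop:local-ext-index-1} (for vertex points). The only mild subtlety is to verify that the hypothesis ``$w$ in the interior of $P$'' correctly translates into ``$w$ in the interior of the sector at each vertex $v$,'' which follows from convexity of $P$ since $v$ is an extreme point and the sector at $v$ contains $P$.
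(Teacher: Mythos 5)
Your proof is correct and follows essentially the same route as the paper: dispatch non-vertex degree-1 vertices via Proposition \ref{prop:degree-1-stable} and vertex degree-1 vertices via Proposition \ref{prop:rotate-nodal} together with Proposition \ref{prop:local-ext-index-1}. The only cosmetic point is that a degree-1 vertex cannot lie in the open interior of $P$ (the nodal set of $R_w u$ is a graph with even valence there), so that subcase you mention is vacuous rather than handled by Proposition \ref{prop:degree-1-stable}.
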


\begin{proof}
Each degree 1 vertex $p$ of $\Zcal(R_w u)$ lies in $\partial P$.
If $p$ lies in the interior of an edge, then 
Proposition \ref{prop:degree-1-stable} applies.
If $p$ is a vertex, then Proposition \ref{prop:rotate-nodal}
applies.
\end{proof}

\begin{prop}
\label{interior_crit}
Let $u$ be a second Neumann eigenfunction $u$ on a convex polygon $P$. 
If $u$ has a critical point $p$ that lies in the interior of $P$, 
then $u$ has at least four nonzero index critical points on the boundary. 
In particular, $u$ has at least five critical points. 
\end{prop}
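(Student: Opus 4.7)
The plan is to combine the rotational vector field analysis from Section \ref{sec:convex} (in particular Proposition \ref{prop:rotate-nodal} and Corollary \ref{coro:rotational-degree-1-stable}) with the Poincar\'e--Hopf index formula from Proposition \ref{prop:index-thm}, applied to the vector field $R_p$ centered at the interior critical point $p$.

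First, by Lemma \ref{lem:top-stable-implies-stable} and a small perturbation of $P$ (preserving convexity, the nonzero-index critical points of $u$, and in particular the interior critical point), I would reduce to the generic case in which $P$ has no right angles, no two sides are orthogonal, and the Hessian $H$ of $u$ at $p$ is nonzero and not a scalar multiple of the identity. Under these assumptions, Corollary \ref{coro:1-connected-index} implies that every vertex of $P$ has index in $\{0,1\}$, and Corollary \ref{coro:rotational-degree-1-stable} identifies the degree-$1$ vertices of $\Zcal(R_w u)$ (for any interior $w$) with nonzero-index critical points of $u$.

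Next, I would take $w = p$ and study $G := \Zcal(R_p u)$. Since $\nabla u(p) = 0$, a local expansion at $p$ yields
\[
R_p u(\xi, \eta) \;=\; (H_{22} - H_{11})\,\xi\eta \;+\; H_{12}\,(\xi^2 - \eta^2) \;+\; O\bigl(|(\xi,\eta)|^3\bigr)
\]
in local coordinates $(\xi, \eta)$ at $p$. Under the genericity assumption on $H$, this quadratic form has four distinct zero rays, so $p$ is a degree-$4$ vertex of $G$. Each of the four arcs emanating from $p$ continues until it either reaches $\partial P$---yielding a nonzero-index boundary critical point by Corollary \ref{coro:rotational-degree-1-stable} and Proposition \ref{prop:rotate-nodal}---or meets another interior critical point of $u$ (which, generically, appears as a smooth degree-$2$ point of $G$).

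The main obstacle is to verify that the four arcs from $p$ produce four \emph{distinct} nonzero-index boundary critical points, ruling out both internal merging (two arcs joining at a second interior critical point of $u$ to form a loop) and boundary coincidence (multiple arcs terminating at the same endpoint). I would resolve this by combining the index formula $2 = 2I + B$---where the vertex contribution to $B$ is non-negative by the above and where the global extrema $M$ and $m$ each contribute $+1$---with a careful local analysis of how many arcs of $G$ a boundary critical point of index $\pm 1$ can absorb (using Lemma \ref{lem:tangential-cusp} and the local structure of $R_p u$ near such points). These constraints force the existence of at least four nonzero-index boundary critical points of $u$; including $p$ yields at least five critical points in total.
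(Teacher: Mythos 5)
Your plan correctly centers the rotational field at the interior critical point $p$, and the observation that $p$ is a vertex of $\Zcal(R_p u)$ of degree at least four is indeed the first half of the paper's argument. But you have identified the crux of the matter---ruling out arcs from $p$ that close up into an interior loop, or that merge before reaching distinct boundary points---and then left it unresolved: the proposed fix via the index formula plus ``a careful local analysis of how many arcs a boundary critical point can absorb'' is a hope, not an argument, and the index formula by itself does not control the topology of $\Zcal(R_p u)$. The paper supplies the missing ingredient by invoking Proposition 6.2 of \cite{J-M}, which asserts that $\Zcal(R_p u)$ is a \emph{tree} whose degree-one vertices lie on $\partial P$. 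This is a genuinely separate structural input (stemming from a nodal-domain bound for the auxiliary eigenfunction $R_p u$), and it is precisely what rules out loops. With it, a tree with an interior vertex of degree at least four has at least four leaves by an elementary count, and each leaf is a nonzero-index boundary critical point by Corollary \ref{coro:rotational-degree-1-stable}.

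Two secondary problems. First, the genericity reduction is not sound as stated: an interior critical point may have index zero (Proposition \ref{prop:index-at-least-minus-one}), in which case Lemma \ref{lem:top-stable-implies-stable} gives no persistence under perturbation of $P$, so you cannot assume the hypothesis of the proposition survives the reduction. Second, the reduction is unnecessary: if the Hessian of $u$ at $p$ is a multiple of the identity, the quadratic part of $R_p u$ at $p$ vanishes identically, so $R_p u$ vanishes there to order at least three, and Cheng's theorem (\cite{Cheng}, Theorem 2.5) still yields valence at least four (in fact at least six) at $p$---which is exactly how the paper proceeds, with no genericity hypothesis at all. One should also note that $R_p u \not\equiv 0$, since a nonconstant function that is radially symmetric about an interior point cannot satisfy Neumann conditions along a straight boundary segment.
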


\begin{proof}
Without loss of generality $p=0$. Since $p$ is a critical point of $u$
we have 
\[
u(z)~
=~
u(0)~
+~
a \cdot x^2 + b \cdot xy + c \cdot y^2~
+~
O(|z|^3) 
\]
for some constants $a$, $b$ and $c$. 
We have $R_p u=-y\partial_x + x \partial_y$ and hence 
\[ 
R_p u(z)~
=~
b \cdot (x^2 - y^2)~ +~ 2(c-a)\cdot xy~ -~ b \cdot y^2~ + O(|z|^3).
\]
In particular, $p=0$ is a nodal critical point of the 
Laplace eigenfunction $R_pu$. Thus, by the result of \cite{Cheng},   
the valence of $\Zcal(R_p u)$ at $p$ is at least four. 
By Proposition 6.2 in \cite{J-M}, the nodal set $\Zcal(R_p u)$
is a tree whose degree 1 vertices lie in the boundary of $P$.  
Thus $\Zcal(R_p u)$ has at least four degree 1 vertices, 
and each of these is a nonzero index critical point
by Corollary \ref{coro:rotational-degree-1-stable}.
\end{proof}

\begin{coro} 
If $u$ has has only three critical points, 
then each critical point lies on the boundary.
Moreover, one critical point is a global maximum,
one critical point is a global minimum, and the third
critical point has index zero. 
\end{coro}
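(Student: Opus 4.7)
The plan is to combine three ingredients already established: Proposition \ref{interior_crit} (no interior critical point without at least five critical points), Proposition \ref{prop:local-ext-index-1} (index $+1$ characterizes local extrema), and the Poincar\'e--Hopf formula of Proposition \ref{prop:index-thm}.

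\textbf{Step 1: Boundary location.} I would first observe that if any of the three critical points lay in the interior of $P$, then Proposition \ref{interior_crit} would guarantee at least four additional nonzero index critical points on $\partial P$, giving at least five critical points in total and contradicting the hypothesis. Hence all three critical points lie in $\partial P$.

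\textbf{Step 2: Two of the three are the global extrema.} Since $u$ is a second Neumann eigenfunction, it is nonconstant and orthogonal to the constant functions, so it has mean zero on $P$. In particular $\max_{\oP} u > 0 > \min_{\oP} u$, so the global maximum and global minimum are attained at two distinct points $p_+, p_- \in \oP$. Each is a local extremum, so by Proposition \ref{prop:local-ext-index-1} we have $\ind(u,p_\pm)=1$; in particular both points lie in $\crit(u)$, and by hypothesis $\crit(u)$ has exactly three elements, so the third critical point $q$ is uniquely determined.

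\textbf{Step 3: The third critical point has index zero.} Since $P$ is a convex polygon, it is simply connected, so $\chi(P)=1$. All three critical points lie on $\partial P$ by Step 1, so Proposition \ref{prop:index-thm} reduces to
\[
2~=~\ind(u,p_+)~+~\ind(u,p_-)~+~\ind(u,q)~=~1+1+\ind(u,q),
\]
forcing $\ind(u,q)=0$.

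There is no serious obstacle here: the only mildly delicate point is ensuring $p_+$ and $p_-$ are genuinely distinct members of $\crit(u)$ (as opposed to, say, both being absorbed into a single vertex), but this is immediate from $u(p_+) \neq u(p_-)$ and from the fact that the definition of $\crit(u)$ in \S \ref{sec:topology} explicitly includes vertices of nonzero index.
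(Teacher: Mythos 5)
Your proof is correct and takes essentially the same route as the paper's: Proposition \ref{interior_crit} to locate the critical points on the boundary, the existence of distinct global extrema each of index $+1$ via Proposition \ref{prop:local-ext-index-1}, and the index formula of Proposition \ref{prop:index-thm} to force the remaining index to be zero. The only difference is that you spell out the mean-zero argument and the Euler characteristic computation explicitly, which the paper leaves implicit.
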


\begin{proof}
By Proposition \ref{interior_crit}, each critical point lies
on the boundary.  Since $u$ is nonconstant, at least two of these 
critical points are global extrema.  
The index of each global extremum
is $+1$. 
Thus, if there are exactly three critical points, then
it follows from 
Proposition \ref{prop:index-thm} that two critical
points have index 1 and the third has index zero.
\end{proof}



\begin{thebibliography}{99}


\bibitem[AtrBrd04]{A-B} R. Atar, K. Burdzy; {\em On Neumann eigenfunctions in lip domains.} Journal of the AMS. (17) 2 (2004), 243-265.

\bibitem[Bnl-Brd99]{B-B} R. Ba\~{n}uelos and K. Burdzy,
{\em On the ``Hot Spots'' Conjecture of J. Rauch.} 
Journal of Functional Analysis 164, 1-33 (1999).

\bibitem[Brd05]{Burdzy-one-hole} 
Burdzy, Krzysztof 
{\em The hot spots problem in planar domains with one hole.}
Duke Math. J. 129 (2005), no. 3, 481-502.
 
 \bibitem[BrdWrn99]{B-W}
 Burdzy, Krzysztof and Werner, Wendelin 
 {\em A counterexample to the ``hot spots'' conjecture.}
 Ann. of Math. (2) 149 (1999), no. 1, 309–317.

 \bibitem[JdgMnd20]{J-M} C. Judge and S. Mondal, {\em Euclidean triangles have no hot spots.} Ann. of Math. (2) 191 (2020), no. 1, 167-211.
 
 \bibitem[JdgMnd21a]{Erratum} C. Judge and S. Mondal, {\em Erratum: Euclidean triangles have no hot spots.} 
 
  \bibitem[JdgMnd21b]{J-M-arc} C. Judge and S. Mondal, {\em Hypersurfaces
  of critical points of Laplace eigenfunctions (in preparation).}


  \bibitem[Chn76]{Cheng} S. Y. Cheng, {\em Eigenfunctions and nodal sets.} Comment. Math. Helv. 51 (1976), no. 1, 43-55.

  \bibitem[Kwl85]{Kawohl} B. Kawohl, {\em Rearrangements and Convexity of Level Sets in PDE.} Lecture Notes in Mathematics, 1150, Springer, Berlin, 1985. MR 87a:35001.
  
  \bibitem[Kato]{Kato} T. Kato, {\em Perturbation theory for linear operators}. Reprint of the 1980 edition. Classics in Mathematics. Springer-Verlag, Berlin, 1995.
  
  \bibitem[Lebedev]{Lebedev} N. N. Lebedev, {\em Special functions and their applications.} Translated from the Russian and edited by Richard A. Silverman. Dover, New York, 1972. 
  
  \bibitem[Miyamoto]{Miyamoto} Yasuhito Miyamoto,
{\it A planar convex domain with many isolated ``hot spots'' on the boundary}.
Japan J. Indust. Appl. Math. (2013).

  \bibitem[OtlRss09]{Otal-Rosas} J.P. Otal and E. Rosas, {\em Pour toute surface hyperbolique de genre $g$, $\lambda_{2g-2} > 1/4.$} Duke Math. J. 150 (2009), no. 1, 101-115, 
  
  \bibitem[Polymath]{polymath} Polymath project 7, Thread 5 {\it Hots spots conjecture}.
 August 9, 2013. \url{https://polymathprojects.org/2013/08/09/polymath7-research-thread-5-the-hot-spots-conjecture/}.

  
\bibitem[Rch74]{Rauch}  J. Rauch, 
{\em Five problems: an introduction to the qualitative theory of partial differential equations.} Partial differential equations and related topics (Program, Tulane Univ., New Orleans, La., 1974), pp. 355-369. Lecture Notes in Math., Vol. 446, Springer, Berlin, 1975. 

\bibitem[Rhl21]{Roh} J. Rohleder, {\em A new approach to the hot spots 
conjecture.} \url{https://arxiv.org/pdf/2106.05224.pdf}

 \bibitem[Siudeja]{Siudeja} B. Siudeja, {\em Hot spots conjecture for 
 a class of acute triangles.} Math. Z. (2015) 280, 783-806.

\bibitem[Taylor]{Taylor} Michael E. Taylor, 
{\em Partial differential equations I. Basic theory.}
Second edition. Applied Mathematical Sciences, 115. 
Springer, New York, 2011. 

 
\end{thebibliography}
\end{document}